\newtheorem{theorem}{Theorem}
\newtheorem{definition}[theorem]{Definition}
\newtheorem{example}[theorem]{Example}
\newtheorem{lemma}[theorem]{Lemma}
\newtheorem{proposition}[theorem]{Proposition}
\newtheorem{remark}[theorem]{Remark}
\newenvironment{proof}[1][Proof]{\noindent\textbf{#1.} }{\ \rule{0.5em}{0.5em}}
\begin{document}

\title{Nonautonomous control systems and skew product flows}
\author{Fritz Colonius\\Institut f\"{u}r Mathematik, Universit\"{a}t Augsburg, \\Universit\"{a}tsstrasse 14, 86159 Augsburg, Germany,\\fritz.colonius@uni-a.de
\and Roberta Fabbri\\Dipartimento di Matematica e Informatica `Ulisse Dini', \\Universit\`{a} degli Studi di Firenze, \\Viale Giovanni Battista Morgagni 67/a, 50134 Firenze, Italy,\\roberta.fabbri@unifi.it}
\maketitle

\begin{abstract}
For nonautonomous control systems with compact control range, associated
control flows are introduced. This leads to several skew product flows with
various base spaces. The controllability and chain controllability properties
are studied and related to properties of the associated skew product flows.

\end{abstract}



\textbf{Key words:} nonautonomous control system, skew product flow,
controllability, chain transitivity

\textbf{MSC codes: } 93B05, 37B20, 37C60

\section{Introduction\label{Section1}}

The goal of this paper is to analyze controllability properties of
nonautonomous control-affine systems of the form%
\begin{equation}
\dot{x}(t)=f_{0}(\omega\cdot t,x(t))+\sum_{i=1}^{m}u_{i}(t)f_{i}(\omega\cdot
t,x(t)),\quad u(t)=(u_{i}(t))_{i=1,\dots,m}\in U, \label{control}%
\end{equation}
where $f_{i}:\Omega\times M\rightarrow TM,i=0,1,\dots,m$, are continuous maps,
$\Omega$ is a compact metric space and $M$ denotes a $d$-dimensional connected
smooth ($C^{\infty}$)-manifold with tangent bundle $TM$. For every $\omega
\in\Omega$, the map $f_{i}(\omega,\cdot)$ is assumed to be a  $C^{1}$-vector field on $M$ for any $i=0,\ldots, m$, 
the set $U\subset\mathbb{R}^{m}$ is compact and convex, and the control
functions $u$ are taken  in%
\[
\mathcal{U}:=\left\{u\in L^{\infty}(\mathbb{R},\mathbb{R}^{m})\left\vert
u(t)\in U\text{ for almost all }t\in\mathbb{R}\right.  \right\}  .
\]
Furthermore, $(\Omega,\sigma)$ is a minimal continuous flow with
$\sigma:\mathbb{R}\times\Omega\rightarrow\Omega$ written as $\sigma
(t,\omega)=\sigma_{t}(\omega)=\omega\cdot t,t\in\mathbb{R}$.

We suppose that for every $u\in\mathcal{U}$ and $(\omega,x_{0})\in\Omega\times
M$ there exists a unique (Carath\'{e}odory) solution $x(t)=\varphi
(t,\omega,x_{0},u)$ with $x(0)=x_{0}$ on a maximal open interval in
$\mathbb{R}$. For existence theory, we refer e.g. to Walter \cite[Supplement
II of \S \ 10]{Wal}, Bressan and Piccoli \cite[Chapter 2]{BreP}, Kawan
\cite[Section 1.2]{Kawan13}. Instead of analyzing the behavior of system
\eqref{control} for a single excitation $\omega$, we allow all excitations in
$\Omega$.

The present paper combines two lines of research. A classical approach to
nonautonomous differential equations embeds the equation into a family of
equations depending on a driving system in the background. In particular, this
can be achieved by the so-called hull construction going back to Bebutov
\cite{Be}. This led to the development of skew product dynamical systems
pioneered by Miller \cite{Miller} and Sell \cite{Sell, Sell1}. This concept
mainly motivated by almost periodic equations (cf. Shen and Yi \cite{ShYi},
Zhao \cite{ZH}) has found wide ranging generalizations and applications in
spectral theory for finite dimensional systems (Sacker and Sell \cite{SS78}),
Hamiltonian systems (Johnson, Obaya, Novo, N\'{u}\~{n}ez, Fabbri
\cite{JONNF16}), for infinite dimensional (Smith \cite{Smi}, Shen and Yi
\cite{ShYi}) and random dynamical systems (Arnold \cite{Ar98}). The literature
in these active fields is huge, we only cite Kloeden and Rasmussen
\cite{KloR11}, Carvalho, Langa, and Robinson \cite{CLR}, Novo, N\'{u}\~{n}ez,
and Obaya \cite{NovNO05}), and N\'{u}\~{n}ez, Obaya, and Sanz \cite{NOS1}. We
also mention that, using a pullback approach for chain transitivity, Chen and
Duan \cite{ChenDu11} constructed a state space decomposition for nonautonomous
dynamical systems on a non-compact state space of a skew product flow.

In control theory, an approach to autonomous control systems introduces a
dynamical system, the control flow, on the product of the state space of the
differential equation with an appropriate set of control functions endowed
with the right shift. This again yields a skew product flow and, in
particular, leads to advances in controllability and stability problems by
exploiting concepts and tools from dynamical systems. The theory of control
flows, control sets, and chain control sets is developed in Colonius and
Kliemann \cite{ColK00} and Kawan \cite{Kawan13}. For further contributions we
refer to Ayala, da Silva, and Mamani \cite{ASM23}, da Silva \cite{daS23},
Cavalheiro, Cossich, and Santana \cite{CCS24}, Boarotto and Sigalotti
\cite{BoaS20}, Tao, Huang, and Chen \cite{TaoHC}.

Systems of the form (\ref{control}) include both: they are nonautonomous by
the presence of a driving system on $\Omega$ and additionally controls in
$\mathcal{U}$ are present. There are various ways to look at them: they are
nonautonomous control systems in $M$ with states $x\in M$; autonomous control
systems in $\Omega\times M$ with extended states $(\omega,x)\in\Omega\times
M$; and they are dynamical systems, nonautonomous control flows in
$\mathcal{U}\times\Omega\times M$ with states $(u,\omega,x)\in\mathcal{U}%
\times\Omega\times M$. In Section \ref{sec:preli} we will discuss
corresponding skew product flows.

We will introduce nonautonomous control sets and chain control sets (defined
by generalized controllability properties) as generalizations of the
corresponding autonomous versions. The main results of the present paper are
Theorem \ref{Theorem_single_fiber}, which analyzes when chain control sets are
determined by a single fiber for the corresponding control flow; Theorem
\ref{Theorem_equivalence}\textbf{ }showing that the chain control sets
uniquely correspond to the (appropriately defined) maximal chain transitive
sets of the control flow; Theorem \ref{Theorem_exact} shows that nonautonomous
equilibria for the uncontrolled system (i. e., for $u(t)\equiv0$) are in
control sets, and Theorem \ref{Theorem_equivalence2} relates these control
sets to topologically mixing sets of the control flow. Nonautonomous
equilibria have been studied, in particular, for monotone flows.

In the rest of this introduction, we describe the structure of the paper,
which develops along five sections. Section \ref{sec:preli} is a preliminary
section where basic notions and results for nonautonomous dynamical and
control systems are presented together with an analysis of the control flow in
$\mathcal{U}\times\Omega\times M$ of the system \eqref{control} as a
continuous dynamical system. A scalar example, Example \ref{Example_hull}, is
presented due to Elia, Fabbri, and N\'{u}\~{n}ez \cite{EFN25}. This uses the
so-called hull construction. Section \ref{transitivity} relates chain control
sets to invariant chain transitive sets of the nonautonomous control flow. The
final Section \ref{NONAUTONOMOUS-CS} defines nonautonomous control sets for
system \eqref{control}. Controllability properties are analyzed in a
neighborhood of a nonautonomous equilibrium.

The paper completes and generalizes some of the results obtained by Colonius and
Wichtrey \cite{ColW09} for control systems described by ordinary differential
equations subject to almost periodic excitations. With the time-translation,
these excitations generate a minimal ergodic flow on a compact metric space.

The bifurcation results for nonautonomous equilibria in the scalar Example
\ref{Example_hull} provided our initial motivation for the present paper; cf.
Anagnostopoulou, P\"{o}tzsche, Rasmussen \cite{AgPR} for a treatise of
nonautonomous bifurcation theory. We wondered which controllability properties
would hold in the presence of the various bifurcation types of the
uncontrolled system; cf. Colonius and Kliemann \cite[Section 8.2]{ColK00} for
the autonomous case. It turned out that an adequate treatment would require an
appropriate framework of nonautonomous control systems and control flows,
which led to the present paper. We hope to come back to the bifurcation
problems for control systems. The rich properties of scalar nonautonomous
differential equations have found renewed interest, cf. Fabbri, Johnson, and
Mantellini \cite{FJM}, Campos, N\'{u}\~{n}ez, and Obaya \cite{CamNO23},
Due\~{n}as, N\'{u}\~{n}ez, and Obaya \cite{DNO23}, and Cheban \cite{Cheb24,
Cheb2}.

\section{Preliminaries\label{sec:preli}}

In this section we present basic properties of nonautonomous dynamical and
control systems. In particular, we explain in more detail the various
possibilities to describe systems of the form \eqref{control}.

A global real Borel measurable flow on a locally compact Hausdorff topological
space $X$ is a Borel measurable map $\phi:\mathbb{R}\times X\rightarrow X$
satisfying $\phi(0,x)=x$ and $\phi(t+s,x)=\phi(s,\phi(t,x))$ for all
$t,s\in\mathbb{R}$ and $x\in X$. The flow is continuous if $\phi$ is a
continuous map, and in this case we speak of a global real continuous flow or
continuos time dynamical system on $X$ denoted by $(X, \phi)$. We speak of
local flow if the map $\phi$ is defined, at least Boreal measurable, and
satisfies the two properties above on an open subset $\mathcal{O}%
\subset\mathbb{R}\times X$ containing $\{0\}\times X$ (see e.g. Ellis \cite{E}
and Johnson et al. \cite{JONNF16}).

Now we recall the definition of a (global and invertible) nonautonomous
dynamical system as a skew-product flow.

\begin{definition}
Let $B$ and $X$ be metric spaces. A skew product flow on the extended state
space $B\times X$ is a flow $\Phi:$ $\mathbb{R}\times B\times X\rightarrow
B\times X$ of the form%
\begin{equation}
\Phi(t,b,x):=(\theta(t,b),\varphi(t,b,x)), \label{eqS-P}%
\end{equation}
where $\theta:\mathbb{R}\times B\rightarrow B$ and $\varphi:\mathbb{R}\times
B\times X\rightarrow X$.
\end{definition}

The flow property of $\Phi$ is equivalent to the requirements that $\theta$ is
a flow on the base $B$ and the map $\varphi$ called cocycle satisfies%
\begin{align*}
&  i)\ \varphi(0,b,x)=x\ \mbox{for all}\ (b,x)\in B\times X,\\
&  ii)\ \varphi(t+s,b,x)=\varphi(s,\theta(t,b),\varphi
(t,b,x))\ \mbox{for all}\ t,s\in\mathbb{R},(b,x)\in B\times X.
\end{align*}
The skew product flows considered in this paper will be continuous, hence the
map $\Phi$ is continuous. Equivalently, the maps $\theta$ and $\varphi$ are
continuous. The autonomous dynamical system $\Phi$ on $B\times X$ defined by
\eqref{eqS-P} is called the skew product flow associated with the
nonautonomous dynamical system $(\theta,\varphi)$. The term skew product
emphasizes the asymmetric roles of the two components of the flow: the first
one, which is a flow on the base $B$ referred as the driving system, does not
depend on $x\in X$ (see e.g. Sacker and Sell \cite{SS78}, Kloeden and
Rasmussen \cite{KloR11}, and Cheban \cite{Cheb24}, \cite{Cheb2}).

Observe that the base of the skew product flow generated by the solutions of a
nonautonomous differential equation is related to the dependence on time of
the problem. It is a dynamical system that describes the changes in the
coefficient functions. In many applications, the base space is compact.

Considered systems of the form \eqref{control} are nonautonomous due to the
presence of the continuous flow $\sigma$ on the compact metric space $\Omega$
which is assumed to be minimal. Recall that a continuous flow $\phi$ on a
compact metric space $X$ is minimal, if it has no proper closed positively
invariant subsets. This is equivalent to the property that the flow has no
proper closed invariant subsets and to the property that the orbit of any
element of $\Omega$ is dense in $X$; cf. Akin, Auslander and Borg
\cite[Theorem 1.1]{AAB96}. The following result is due to Glasner and Weiss
\cite{GW93}, cf. \cite[Theorem 2.4]{AAB96}. It characterizes minimal flows on
compact metric spaces.

\begin{theorem}
\label{Theorem_minimal}Let $(X,\phi)$ be a continuous flow on a compact metric
space. If it is minimal, then it is either equicontinuous or sensitive with
respect to initial conditions, i.e., there is $\delta>0$ such that whenever
$U$ is a nonvoid open set there exist $x,y\in U$ such that $d(\phi
(T,x),\phi(T,y))>\delta$ for some $T>0$.
\end{theorem}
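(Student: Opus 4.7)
The plan is to prove the contrapositive: assuming $(X,\phi)$ is minimal but not sensitive, I will show it is equicontinuous. Negating sensitivity, for every $\epsilon>0$ there exists a nonempty open set $U_\epsilon\subset X$ such that $d(\phi(T,x),\phi(T,y))\le\epsilon$ for all $x,y\in U_\epsilon$ and all $T>0$. The task is to upgrade this local forward-stability to global equicontinuity.

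First I would introduce the set $L_\epsilon=\{x\in X:\exists\,\delta>0\text{ with }d(x,y)<\delta\Rightarrow\sup_{T\ge 0}d(\phi(T,x),\phi(T,y))\le\epsilon\}$ of forward Lyapunov-stable points with modulus $\epsilon$. By non-sensitivity $U_\epsilon\subset L_\epsilon$, so $L_\epsilon$ is nonempty; it is open by construction. It is also forward-invariant under $\phi$: if $x\in L_\epsilon$ with modulus $\delta$ and $s>0$, then by continuity of $\phi_{-s}$ there is a neighborhood $W$ of $\phi_s(x)$ with $\phi_{-s}(W)\subset B(x,\delta)$, and for $y\in W$ one has $d(\phi(T,\phi_s(x)),\phi(T,y))=d(\phi(T+s,x),\phi(T,\phi_{-s}(y)))\le\epsilon$ after the obvious reindexing, so $\phi_s(x)\in L_\epsilon$.

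Next I would exploit minimality to force $L_\epsilon=X$. The complement $X\setminus L_\epsilon$ is closed and, by contrapositive of forward invariance of $L_\epsilon$, is invariant under $\phi_{-t}$ for every $t\ge 0$. Since a continuous flow on a compact metric space is minimal exactly when every backward orbit is dense, any closed set invariant under the backward semigroup is either empty or all of $X$; as $L_\epsilon\neq\emptyset$, we conclude $L_\epsilon=X$. Applied with $\epsilon=1/n$ for every $n\in\mathbb{N}$, this yields: each $x\in X$ admits $\delta_n(x)>0$ with $d(x,y)<\delta_n(x)\Rightarrow\sup_{T\ge 0}d(\phi(T,x),\phi(T,y))\le 1/n$. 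A standard compactness argument on the open cover $\{B(x,\delta_n(x)/2)\}_{x\in X}$ (Lebesgue number) then produces a uniform modulus $\delta(1/n)>0$ valid for all pairs $x,y\in X$, i.e.\ the family $\{\phi_T\}_{T\ge 0}$ is uniformly equicontinuous on $X$.

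Finally, to pass from forward to two-sided equicontinuity, I would invoke Arzel\`a--Ascoli: the uniformly equicontinuous family $\{\phi_T\}_{T\ge 0}\subset C(X,X)$ has compact closure, which is a compact subsemigroup of $C(X,X)$; since each $\phi_T$ is a homeomorphism of the compact metric space $X$ and the $\phi_t$'s form a group, this closure is in fact a compact topological group of homeomorphisms, so the inverses $\phi_{-T}$ inherit uniform equicontinuity. The main obstacle is the minimality step $L_\epsilon=X$: one must carefully track that $L_\epsilon$ is \emph{forward}-invariant, hence $X\setminus L_\epsilon$ is \emph{backward}-invariant, and apply minimality to the backward semigroup; the remainder of the argument is essentially bookkeeping with moduli of continuity and a routine application of Arzel\`a--Ascoli.
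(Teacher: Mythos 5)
The paper does not prove Theorem~\ref{Theorem_minimal}; it cites it directly from Glasner--Weiss and Akin--Auslander--Berg, so there is no internal proof to compare against. Your argument is the standard Auslander--Yorke dichotomy proof, which is exactly what those references use, and the skeleton is correct: negate sensitivity to get a nonempty set of forward-Lyapunov-stable points, show it is open and forward invariant so its complement is a proper closed negatively invariant set, kill it by minimality (using the paper's own observation that minimality of a flow is equivalent to the absence of proper closed positively invariant sets, hence also of negatively invariant ones under time reversal), and then pass to uniform equicontinuity by a Lebesgue-number argument.

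Two places deserve tightening. First, the set $L_\epsilon$ as you wrote it is \emph{not} ``open by construction'': if $x$ is $\epsilon$-stable with modulus $\delta$, a nearby point $z$ is only seen to be $2\epsilon$-stable by the triangle inequality, so what you actually get is $L_\epsilon\subset\mathrm{int}\,L_{2\epsilon}$. The clean fix is to define $L_\epsilon$ by the pairwise condition, $L_\epsilon=\{x:\exists\ \mbox{open}\ V\ni x\ \forall y,z\in V\ \forall T\ge0:\ d(\phi_T y,\phi_T z)\le\epsilon\}$, which is open by definition, still contains a small ball inside $U_\epsilon$, and is forward invariant because $\phi_s$ is a homeomorphism so $\phi_s(V)$ serves as the witness neighborhood for $\phi_s(x)$. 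Second, the final Arzel\`a--Ascoli step is too lightly sketched: the closure of $\{\phi_T:T\ge0\}$ in $C(X,X)$ is a priori only a compact topological \emph{semigroup}, and one needs the Ellis idempotent argument (a compact metric semigroup has an idempotent, which is surjective as a uniform limit of surjections on a compact space, hence equals $\mathrm{id}_X$; together with a short case analysis to produce $t_n\to\infty$ with $\phi_{t_n}\to\mathrm{id}$) to deduce that $\phi_{-s}=\lim\phi_{t_n-s}$ lies in the same equicontinuous family. Alternatively, you could avoid this step entirely by observing that the dichotomy as usually stated (and as used in the paper) only requires equicontinuity of the forward semiflow, in which case your proof ends at the Lebesgue-number paragraph. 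Neither issue is an error of approach; both are routine to repair.
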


We note that the flow on the closure of an almost periodic function is equicontinuous.

Next we turn to nonautonomous control systems. Denote by $\varphi
(t,\omega,x_{0},u)$ the solution of the initial value problem $x(0)=x_{0}$ for
(\ref{control}) on the maximal open interval of existence $\mathcal{I}%
_{\omega,x_{0},u}$. The solution map in the extended state space $\Omega\times
M$ is denoted by%
\[
\psi(t,\omega,x_{0},u)=\bigl(\omega\cdot t,\varphi(t,\omega,x_{0},u)\bigr).
\]
We denote the distance on $\Omega$ as well as a distance on $M$ which is
compatible with the topology of $M$, by the letter $d$. Furthermore, the
metric on $\Omega\times M$ is
\[
d((\omega_{1},x_{1}),(\omega_{2},x_{2}))=\max\left\{  d(\omega_{1},\omega
_{2}),d(x_{1},x_{2})\right\}  .
\]
We call the nonautonomous differential equations with $u\equiv0$ the
uncontrolled system. This defines a continuous local flow%
\begin{equation}
\tau:\mathbb{R}\times\Omega\times M\rightarrow\Omega\times M,~\tau
(t,\omega,x_{0}):=\bigl(\omega\cdot t,\varphi(t,\omega,x_{0},0)\bigr).
\label{tau}%
\end{equation}
Denoting the time shift on $\mathcal{U}$ by $\theta_{t}u=u(t+\cdot
),t\in\mathbb{R}$, we obtain the local cocycle property%
\[
\varphi(t+s,\omega,x_{0},u)=\varphi\bigl(s,\omega\cdot t,\varphi
(t,\omega,x_{0},u),\theta_{t}u\bigr)\text{ where defined.}%
\]
The weak$^{\ast}$ topology on $\mathcal{U}$ is compact and metrizable; cf.
Kawan \cite[Proposition 1.14]{Kawan13}. Throughout this paper, we endow
$\mathcal{U}$ with a corresponding metric; cf. Lemma \ref{Lemma_metric}. The
map%
\begin{equation}
\Phi:\mathbb{R}\times\mathcal{U}\times\Omega\times M\rightarrow\mathcal{U}%
\times\Omega\times M,~\Phi(t,u,\omega,x_{0})=\bigl(\theta_{t}u,\psi
(t,\omega,x_{0},u)\bigr) \label{PHI}%
\end{equation}
satisfies $\Phi(0,u,\omega,x_{0})=(u,\omega,x_{0})$ and, where defined,%
\begin{align*}
\Phi(t+s,u,\omega,x_{0})  &  =(u(t+s+\cdot),\omega\cdot(t+s),\varphi
(t+s,\omega,x_{0},u))\\
&  =(u(t+s+\cdot),(\omega\cdot s)\cdot t),\varphi(t,\omega\cdot s,\varphi
(s,\omega,x_{0},u),u(s+\cdot))\\
&  =\Phi(t,\Phi(s,u,\omega,x_{0})).
\end{align*}
We also write $\Phi_{t}(u,\omega,x_{0})=\Phi(t,u,\omega,x_{0})$. The map
$\Phi$ defines a local skew product flow, called \emph{local control flow}.
The following theorem, which is a variant of Kawan \cite[Proposition
1.17]{Kawan13}, describes the continuity properties of $\Phi$.

\begin{theorem}
Consider a control system of the form (\ref{control}). Let $\mathcal{U}$ be
endowed with a metric compatible with the weak$^{\ast}$ topology on
$L^{\infty}(\mathbb{R},\mathbb{R}^{m})$.

(i) Then the shift flow $\theta:\mathbb{R}\times\mathcal{U}\rightarrow
\mathcal{U}$ is continuous.

(ii) The local cocycle $\varphi:\mathbb{R}\times\Omega\times M\times
\mathcal{U}\rightarrow M$ is continuous in the following sense: For
$(\omega^{\ast},x^{\ast},u^{\ast})\in\Omega\times M\times\mathcal{U}$, let
$t^{\ast}$ be in the maximal open interval of existence $\mathcal{I}%
_{\omega^{\ast},x^{\ast},u^{\ast}}$. Suppose that for $(t,\omega,x,u)$ in a
neighborhood of $(t^{\ast},\omega^{\ast},x^{\ast},u^{\ast})$ one has that
$t\in\mathcal{I}_{\omega,x,u}$. Then for any sequence $(t^{n},\omega^{n}%
,x^{n},u^{n})\rightarrow(t^{\ast},\omega^{\ast},x^{\ast},u^{\ast})$ in
$\mathbb{R}\times\Omega\times M\times\mathcal{U}$ it follows that%
\[
\varphi(t^{n},\omega^{n},x^{n},u^{n})\rightarrow\varphi(t^{\ast},\omega^{\ast
},x^{\ast},u^{\ast})\text{ for }n\rightarrow\infty.
\]

(iii) The local control flow $\Phi$ defined in (\ref{PHI}) is continuous: In
the situation of (ii) it follows that%
\[
\Phi(t^{n},u^{n},\omega^{n},x^{n})\rightarrow\Phi(t^{\ast},u^{\ast}%
,\omega^{\ast},x^{\ast})\text{ in }\mathcal{U}\times\Omega\times M.
\]

\end{theorem}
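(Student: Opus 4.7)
The plan is to establish the three parts in order, with part (iii) being a direct consequence of (i), (ii), and continuity of the driving flow $\sigma$ on $\Omega$.

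For part (i), I would fix a sequence $(t^n, u^n) \to (t^*, u^*)$ in $\mathbb{R} \times \mathcal{U}$ and show $\theta_{t^n} u^n \to \theta_{t^*} u^*$ in the weak$^*$ topology. Since $\mathcal{U}$ is weak$^*$-compact and metrizable, and the weak$^*$ topology on the bounded set $\mathcal{U}$ is characterized by pairing against $L^1$ test functions, it suffices to show that for every $\phi \in L^1(\mathbb{R}, \mathbb{R}^m)$,
\[
\int_\mathbb{R} \phi(s) \cdot u^n(t^n + s) \, ds \to \int_\mathbb{R} \phi(s) \cdot u^*(t^* + s) \, ds.
\]
The substitution $r = t^n + s$ rewrites the left-hand side as $\int \phi(r - t^n) \cdot u^n(r) \, dr$. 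Strong continuity of translation in $L^1$ gives $\phi(\cdot - t^n) \to \phi(\cdot - t^*)$ in $L^1$, and combined with the uniform $L^\infty$-bound on $u^n$ and the weak$^*$ convergence $u^n \to u^*$, a standard three-term estimate yields the convergence.

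For part (ii), I would work with the Carath\'eodory integral form of the solution,
\[
\varphi(t, \omega, x, u) = x + \int_0^t f_0(\omega \cdot s, \varphi(s, \omega, x, u)) \, ds + \sum_{i=1}^m \int_0^t u_i(s) f_i(\omega \cdot s, \varphi(s, \omega, x, u)) \, ds.
\]
Under the hypothesis that solutions exist on a neighborhood of $t^*$ for nearby data, the trajectories $\varphi(\cdot, \omega^n, x^n, u^n)$ restricted to $[0, t^n]$ remain in a fixed compact subset $K \subset M$ for large $n$; on $\Omega \times K$ each $f_i$ is uniformly bounded, uniformly continuous in $\omega$, and locally Lipschitz in $x$ uniformly in $\omega$. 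The key step is to show that the controlled integrals depend continuously on $(\omega, u)$ despite the weak$^*$-only convergence of $u^n$. I would split
\[
\int_0^{t^n} u^n_i(s) f_i(\omega^n \cdot s, \varphi(s, \omega^n, x^n, u^n)) \, ds - \int_0^{t^*} u^*_i(s) f_i(\omega^* \cdot s, \varphi(s, \omega^*, x^*, u^*)) \, ds
\]
into three contributions: one bounded by Lipschitz dependence of $f_i$ and estimated in terms of $\sup_{s \in [0, t^n]} d(\varphi(s, \omega^n, x^n, u^n), \varphi(s, \omega^*, x^*, u^*))$, one arising from the difference $t^n - t^*$ handled by uniform boundedness of $u^n_i f_i$, and a genuinely weak$^*$ piece $\int_0^{t^*} (u^n_i - u^*_i)(s) f_i(\omega^* \cdot s, \varphi(s, \omega^*, x^*, u^*)) \, ds$ that vanishes because $s \mapsto \mathbf{1}_{[0, t^*]}(s) f_i(\omega^* \cdot s, \varphi(s, \omega^*, x^*, u^*))$ is a valid $L^1$ test function. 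Feeding these bounds into a Gronwall inequality for the trajectory difference closes the argument.

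Part (iii) is then immediate: writing $\Phi(t, u, \omega, x) = (\theta_t u, \omega \cdot t, \varphi(t, \omega, x, u))$, each component is continuous by (i), the continuity of $\sigma$, and (ii), respectively. The main obstacle lies in part (ii), specifically in handling the interaction between the weak$^*$ convergence $u^n \to u^*$ and the fact that the integrand also depends on the trajectory $\varphi(\cdot, \omega^n, x^n, u^n)$, which itself varies with $u^n$; the freeze-and-compare splitting above, coupled with Gronwall, is the technical heart of the argument.
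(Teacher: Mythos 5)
Your proposal is correct but takes a genuinely different route from the paper, particularly in part (ii). The paper (following Kawan, Proposition 1.17) reduces to $M=\mathbb{R}^{d}$ and proceeds by compactness: it first shows, assuming the trajectories $\xi^{n}=\varphi(\cdot,\omega^{n},x^{n},u^{n})$ stay in a fixed compact set, that $\{\xi^{n}\}$ is equicontinuous and hence relatively compact in $C([0,\tau];\mathbb{R}^{d})$ by Arzel\`{a}--Ascoli; it then extracts a convergent subsequence $\xi^{k_{n}}\to\xi^{0}$, identifies $\xi^{0}$ as the solution for the limit data (using weak$^{\ast}$ convergence of $u^{n}$ in the integral equation), and invokes uniqueness so that the whole sequence converges; finally a cut-off function argument removes the a priori boundedness hypothesis. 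You instead argue constructively via the integral form and a Gronwall estimate, with the ``freeze-and-compare'' decomposition $u^{n}_{i}[f_{i}(\omega^{n}\cdot s,\varphi^{n})-f_{i}(\omega^{n}\cdot s,\varphi^{\ast})]+u^{n}_{i}[f_{i}(\omega^{n}\cdot s,\varphi^{\ast})-f_{i}(\omega^{\ast}\cdot s,\varphi^{\ast})]+(u^{n}_{i}-u^{\ast}_{i})f_{i}(\omega^{\ast}\cdot s,\varphi^{\ast})$; the first is absorbed by Gronwall using the (uniform) local Lipschitz bound coming from $C^{1}$ vector fields, the second vanishes by uniform continuity of $f_{i}$ on $\Omega\times K$ and uniform convergence $\omega^{n}\cdot s\to\omega^{\ast}\cdot s$ on compacts, and the third is the weak$^{\ast}$ term (to close Gronwall you implicitly need this to be small uniformly in the upper limit of integration, which follows from pointwise convergence plus equi-Lipschitz continuity of the map $t\mapsto\int_{0}^{t}(u^{n}_{i}-u^{\ast}_{i})f_{i}\,ds$). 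The Gronwall route gives a quantitative estimate and avoids the subsequence-extraction step, at the cost of needing the Lipschitz structure explicitly up front; the Arzel\`{a}--Ascoli route is softer and matches the standard treatment in the cited literature. For part (i), the paper simply cites Kawan, while you give the underlying argument (change of variables and strong $L^{1}$-continuity of translations), which is essentially Kawan's proof; part (iii) is handled identically in both. One point you treat a bit loosely is the a priori compactness of the trajectories: the theorem's hypothesis guarantees existence of solutions for nearby data but does not by itself give a common compact set, and the paper addresses this with the cut-off argument in its Step 3; your sketch should acknowledge that reduction.
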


\begin{proof}
Assertion (iii) is an immediate consequence of (i) and (ii). Assertion (i)
holds by \cite[Proposition~1.15]{Kawan13}. We sketch the proof of (ii)
following the proof of \cite[Proposition~1.17]{Kawan13}. Standard arguments
allow us to suppose that $M=\mathbb{R}^{d}$.

Step 1. Fix $\tau>0$ in $\mathcal{I}_{\omega^{\ast},x^{\ast},u^{\ast}}$ and
consider, for sequences $\omega^{n}\rightarrow\omega^{\ast}$ in $\Omega
,u^{n}\rightarrow u^{\ast}$ in $\mathcal{U}$, and $x^{n}\rightarrow x^{\ast}$
in $\mathbb{R}^{d}$, the corresponding solutions $\xi^{n}(t):=\varphi
(t,\omega^{n},x^{n},u^{n})$ on $[0,\tau]$. In the first two steps of the
proof, let us assume that there exists a compact set $K\subset\mathbb{R}^{d}$
with $\xi^{n}(t)\in K$ for all $n\in\mathbb{N}$ and $t\in\lbrack0,\tau]$. We
show that the set $\left\{  \xi^{n}\right\}  _{n\in\mathbb{N}}$ is relatively
compact in $C([0,\tau];\mathbb{R}^{d})$ endowed with the sup-norm. Let $0\leq
t_{1}<t_{2}\leq\tau$. Then%
\[
\left\Vert \xi^{n}(t_{2})-\xi^{n}(t_{1})\right\Vert \leq\int_{t_{1}}^{t_{2}%
}\left(  \left\Vert f_{0}(\omega^{n}\cdot s,\xi^{n}(s))\right\Vert +\sum
_{i=1}^{m}\left\Vert u_{i}^{n}(s)\right\Vert \left\Vert f_{i}(\omega^{n}\cdot
s,\xi^{n}(s))\right\Vert \right)  ds.
\]
Since the set $\Omega\times K\times U$ is compact it follows that the set
$\left\{  \xi^{n}\right\}  _{n\in\mathbb{N}}$ is equicontinuous. From the
assumption that $\xi^{n}(t)\in K$ it follows that for each $t\in\lbrack
0,\tau]$ the set $\left\{  \xi^{n}(t)\right\}  _{n\in\mathbb{N}}$ is
relatively compact. Hence, the Arzel\'{a}-Ascoli theorem can be applied and
there exists a convergent subsequence $\xi^{k_{n}}\rightarrow\xi^{0}\in
C([0,\tau];\mathbb{R}^{d})$. The same arguments hold for $\tau<0$ in
$\mathcal{I}_{\omega^{\ast},x^{\ast},u^{\ast}}$.

Step 2. We claim that $\xi^{0}(t)=\varphi(t,\omega^{\ast},x^{\ast},u^{\ast})$
for all $t$. This follows similarly as Step 2 in \cite[Proposition~1.17]%
{Kawan13} using weak$^{\ast}$ convergence of $u^{n}\rightarrow u^{\ast}$.

Step 3. For $t^{n}\rightarrow t^{\ast},$ $\omega^{n}\rightarrow\omega^{\ast}$
in $\Omega$, $x^{n}\rightarrow x^{\ast}$ in $\mathbb{R}^{d}$, and
$u^{n}\rightarrow u^{\ast}$ in $\mathcal{U}$, it follows that $\varphi
(t^{n},\omega^{n},x^{n},u^{n})\rightarrow\varphi(t^{\ast},\omega^{\ast
},x^{\ast},u^{\ast})$. This uses a continuously differentiable cut-off
function $\chi:\mathbb{R}^{d}\rightarrow\lbrack0,1]$ with $\chi(x)\equiv1$ on
$K$ and $\chi(x)\equiv0$ on the complement of another compact set
$\widetilde{K}\supset K$. For details see Step 3 in \cite[Proposition~1.17]%
{Kawan13}.
\end{proof}

The flow $\Phi$ can be considered in three different ways as a skew product flow:

(i) Let the base space be $\mathcal{U}\times\Omega$ with base flow $\Theta
_{t}(u,\omega)=(\theta_{t}u,\sigma_{t}(\omega)),t\in\mathbb{R}$, and cocycle
on $M$ given by%
\begin{equation}
\varphi_{1}(t,x,(u,\omega))=\varphi(t,\omega,x,u). \label{skew1}%
\end{equation}

(ii) Let the base space be $\mathcal{U}$ with base flow $\theta_{t}%
u,t\in\mathbb{R}$, and cocycle on $\Omega\times M$ given by%
\begin{equation}
\varphi_{2}(t,\left(  \omega,x\right)  ,u))=\psi(t,\omega,x,u)=(\omega\cdot
t,\varphi(t,\omega,x,u)). \label{skew2}%
\end{equation}

(iii) Let the base space be $\Omega$ with base flow $\sigma_{t}(\omega
),t\in\mathbb{R}$, and cocycle on $\mathcal{U}\times M$ given by%
\begin{equation}
\Phi_{1}(t,\left(  u,x\right)  ,\omega))=(\theta_{t}u,\varphi(t,\omega,x,u)).
\label{skew3}%
\end{equation}
Here the presence of $\omega$ indicates that the cocycle $\Phi_{1}$ may be
viewed as a nonautonomous control flow on $\mathcal{U}\times M$. Note that, in
all three cases, the base flows are globally defined.

For the following scalar example, Elia, Fabbri, and N\'{u}\~{n}ez \cite{EFN25}
analyzed the bifurcation behavior of the uncontrolled system (for
$u(t)\equiv0$) with respect to $\varepsilon>0$. Here, with the so called hull
construction due to Bebutov \cite{Be}, they passed from a single equation to a
family of equations; cf. Sell \cite{Sell1}.

\begin{example}
\label{Example_hull}Consider the system in $\mathbb{R}$ given by
\begin{equation}
\dot{x}(t)=-x^{3}(t)+\overline{c}(t)x^{2}(t)+\varepsilon(\overline
{b}(t)x(t)+\overline{a}(t))+u(t),u(t)\in U=[\rho_{1},\rho_{2}],
\label{eq:esempio}%
\end{equation}
where $(\overline{a},\overline{b},\overline{c})$ are bounded uniformly
continuous real functions and $\varepsilon>0$ and $\rho_{1}<0<\rho_{2}$ are
constants. For the uncontrolled system the skew product formalism defines a
(possibly local) real continuous flow $\tau$ on the vector bundle
$\Omega\times\mathbb{R}$, where $\Omega$ is the hull of $(\overline
{a},\overline{b},\overline{c})$. That is $\Omega$ is the closure in the
compact-open topology of $C(\mathbb{R},\mathbb{R}^{3})$ of the set of
time-shifts%
\[
\left\{  (\overline{a}(t+\cdot),\overline{b}(t+\cdot),\overline{c}%
(t+\cdot))\left\vert t\in\mathbb{R}\right.  \right\}  .
\]
Define $a(\omega):=\omega_{1}(0),b(\omega)=\omega_{2}(0)$, and $c(\omega
)=\omega_{3}(0)$ for $\omega=(\omega_{1},\omega_{2},\omega_{3})\in\Omega$ and
write the time shift as $\omega(t+\cdot)=\omega\cdot t,t\in\mathbb{R}$. We
obtain the family of equations%
\begin{equation}
\dot{x}(t)=-x^{3}(t)+c(\omega\cdot t)x^{2}(t)+\varepsilon\left(  b(\omega\cdot
t)x(t)+a(\omega\cdot t)\right)  +u(t),\quad\omega\in\Omega.
\label{family-omega}%
\end{equation}
Thus the original equation (\ref{eq:esempio}), which is the equation with
$\overline{\omega}=(\overline{a},\overline{b},\overline{c})$, is embedded into
a family of equations. Additional recurrence assumptions on the coefficient
functions guarantee that the flow $(\Omega,\sigma)$ with $\sigma
(t,\omega):=\omega(t+\cdot),t\in\mathbb{R}$, is a minimal flow on a compact
metric space. Let $\varphi(\cdot,\omega,x,u)$ be the local solution of
(\ref{family-omega}). With the notation introduced in this section, the map
\[
\Phi:\mathbb{R}\times\mathcal{U}\times\Omega\times\mathbb{R}\rightarrow
\mathcal{U}\times\Omega\times\mathbb{R}%
\]
given by $\Phi(t,u,\omega,x):=\bigl(\theta_{t}u,\omega\cdot t,\varphi
(t,\omega,x,u)\bigr)$ is a continuous dynamical system on the extended state
space $\mathcal{U}\times\Omega\times\mathbb{R}$, the local control flow
corresponding to (\ref{family-omega}). Remark \ref{Remark_hull} gives some
information on the controllability properties of systems of the form
(\ref{family-omega}).
\end{example}

\section{Chain Control Sets\label{chain}}

In this section we define and characterize chain control sets in the extended
state space $\Omega\times M$. For simplicity of exposition, we suppose here
that $\Phi$ is a global flow.

It will be convenient to write for a subset $A\subset\Omega\times M$ the
section with a fiber over $\omega\in\Omega$ as%
\[
A_{\omega}:=A\cap(\{\omega\}\times M)\text{, hence }A=\bigcup\nolimits_{\omega
\in\Omega}A_{\omega}.
\]
Where convenient, we identify $A_{\omega}$ and $\{x\in M\left\vert
(\omega,x)\in A\right.  \}$.

\begin{definition}
Fix $(\omega,x),(\overline{\omega},y)\in\Omega\times M$ and let $\varepsilon
,T>0$. A controlled $(\varepsilon,T)$-chain $\zeta$ from $(\omega,x)$ to
$(\overline{\omega},y)$ is given by $n\in\mathbb{N}$, elements $(\omega
_{0},x_{0})=(\omega,x),\allowbreak(\omega_{1},x_{1}),\dots,(\omega_{n}%
,x_{n})=(\omega_{n},y)\in\Omega\times M$, controls $u_{0},\dots,u_{n-1}%
\in\mathcal{U}$, and times $T_{0},\dots,T_{n-1}\geq T$ such that

(i) $\omega_{j}\cdot T_{j}=\omega_{j+1}$ for $j=0,\dots,n-1$, and
$d(\omega_{n},\overline{\omega})<\varepsilon$,

(ii) $d(\varphi(T_{j},\omega_{j},x_{j},u_{j}),x_{j+1})<\varepsilon$ for
$j=0,\dots,n-1$.
\end{definition}

With $S_{0}:=0,S_{j}:=T_{0}+\cdots+T_{j-1},j=1,\dots,n$, we can write the
conditions above as%
\[
\omega_{j}=\omega\cdot S_{j},d(\omega\cdot S_{n},\overline{\omega
})<\varepsilon,\text{ }d(\varphi(T_{j},\omega\cdot S_{j},x_{j},u_{j}%
),x_{j+1})<\varepsilon\text{ for }j=0,\dots,n-1.
\]
A nonvoid set $A\subset\Omega\times M$ is called chain controllable, if for
all $(\omega,x),(\overline{\omega},y)\in A$ and all $\varepsilon,T>0$ there
exists a controlled $(\varepsilon,T)$-chain in $\Omega\times M$ from
$(\omega,x)$ to $(\overline{\omega},y)$. If for all $\varepsilon,T>0$ all
segments $\varphi(t,\omega_{j},x_{j},u_{j}),t\in\lbrack0,T_{j}]$, of the
controlled $(\varepsilon,T)$-chains are contained in a subset $Q\subset M$, we
say that $A$ is chain controllable in $\Omega\times Q$.

This definition serves to introduce the following concept of (nonautonomous)
chain control sets.

\begin{definition}
\label{Etotal}A chain control set is a nonvoid maximal set $E\subset
\Omega\times M$ such that

(i) for all $(\omega,x)\in E$ there is $u\in\mathcal{U}$ with $\psi
(t,\omega,x,u)\in E$ for all $t\in\mathbb{R}$,

(ii) for all $(\omega,x),(\overline{\omega},y)\in E$ and all $\varepsilon,T>0$
there exists a controlled $(\varepsilon,T)$-chain from $(\omega,x)$ to
$(\overline{\omega},y)$.
\end{definition}

Note that, for chain control sets, the three components $x$, $\omega$, and $u$
are treated in different ways: jumps are allowed in $x$, approximate
reachability is required for $\omega$ and no condition on the controls is imposed.

\begin{remark}
\label{Remark_2T}We may assume that, for any controlled $(\varepsilon
,T)$-chain, the jump times satisfy $T_{j}\in\lbrack T,2T]$ for all $j$. This
can be achieved by introducing trivial jumps at times which are of the form
$kT\leq T_{j}$ with $k\in\mathbb{N}$ till the remaining time is $T_{j}-kT<2T$.
\end{remark}

In general, the concatenation of controlled $(\varepsilon,T)$-chains is not a
controlled $(\varepsilon,T)$-chain. This is due to the requirement
$d(\omega_{n},\overline{\omega})<\varepsilon$. Instead, the following weaker
property holds.

\begin{lemma}
\label{Lemma_concatenation}Consider $(\omega^{i},x^{i})\in\Omega\times
M,i=1,2,3$, and assume that for all $\varepsilon,T>0$ there are controlled
$(\varepsilon,T)$-chains from $(\omega^{1},x^{1})$ to $(\omega^{2},x^{2})$ and
from $(\omega^{2},x^{2})$ to $(\omega^{3},x^{3})$. Then, for all
$\varepsilon,T>0$, there are controlled $(\varepsilon,T)$-chains from
$(\omega^{1},x^{1})$ to $(\omega^{3},x^{3})$.
\end{lemma}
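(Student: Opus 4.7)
The plan is to construct the combined chain by first fixing a sufficiently fine version of the second chain, and then selecting the first chain so close to $(\omega^2,x^2)$ in the base component that the mismatch introduced at the junction is absorbed by joint continuity of the cocycle. The issue flagged just above the lemma is precisely that naive concatenation fails because the first chain terminates at some $(\omega_{n_1}^{(1)},x^2)$ with $\omega_{n_1}^{(1)}$ only $\varepsilon$-close to $\omega^2$, not equal to it; the second chain was designed along the base orbit of $\omega^2$, and after concatenation the base component evolves from $\omega_{n_1}^{(1)}$ instead, driving the trajectories along a nearby but different orbit.

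Fix $\varepsilon,T>0$. I would first pick a controlled $(\varepsilon/2,T)$-chain $\zeta_2$ from $(\omega^2,x^2)$ to $(\omega^3,x^3)$, with data $(\omega_j^{(2)},x_j^{(2)})$, times $T_j^{(2)}$, controls $u_j^{(2)}$, partial sums $S_j^{(2)}=\sum_{i<j}T_i^{(2)}$, and total time $\tau_2=S_{n_2}^{(2)}$; by Remark \ref{Remark_2T} I may assume $T_j^{(2)}\in[T,2T]$. Using joint continuity of the cocycle $\varphi$ at each of the finitely many quadruples $(T_j^{(2)},\omega_j^{(2)},x_j^{(2)},u_j^{(2)})$ (supplied by the preceding theorem) together with uniform continuity of the base flow $\sigma$ on the compact set $[0,\tau_2]\times\Omega$, I choose $\delta\in(0,\varepsilon/2)$ such that $d(\omega',\omega^2)<\delta$ forces both $d(\omega'\cdot s,\omega^2\cdot s)<\varepsilon/2$ for every $s\in[0,\tau_2]$, and
\[
d\bigl(\varphi(T_j^{(2)},\omega'\cdot S_j^{(2)},x_j^{(2)},u_j^{(2)}),\varphi(T_j^{(2)},\omega_j^{(2)},x_j^{(2)},u_j^{(2)})\bigr)<\varepsilon/2
\]
for every $j=0,\ldots,n_2-1$. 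Finally I apply the hypothesis to obtain a controlled $(\delta,T)$-chain $\zeta_1$ from $(\omega^1,x^1)$ to $(\omega^2,x^2)$, terminating at $(\omega_{n_1}^{(1)},x^2)$ with $d(\omega_{n_1}^{(1)},\omega^2)<\delta$, and concatenate $\zeta_1$ with the times, controls, and jump targets of $\zeta_2$, but starting from $(\omega_{n_1}^{(1)},x^2)$ rather than $(\omega^2,x^2)$.

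To check that the result is a controlled $(\varepsilon,T)$-chain from $(\omega^1,x^1)$ to $(\omega^3,x^3)$, I verify condition (i): the $\omega$-components in the second portion are $\omega_{n_1}^{(1)}\cdot S_j^{(2)}$, which evolve correctly under $\sigma$, and the terminal base component $\omega_{n_1}^{(1)}\cdot\tau_2$ satisfies $d(\omega_{n_1}^{(1)}\cdot\tau_2,\omega^3)\leq d(\omega_{n_1}^{(1)}\cdot\tau_2,\omega^2\cdot\tau_2)+d(\omega_{n_2}^{(2)},\omega^3)<\varepsilon/2+\varepsilon/2$. For condition (ii), the first portion contributes jumps of size $<\delta<\varepsilon$, while in the second portion the triangle inequality combines the two $\varepsilon/2$-bounds displayed above. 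The main (and only) obstacle is identifying the right continuity statement: the $s$-uniform base bound comes from compactness of $[0,\tau_2]$, and pointwise cocycle continuity is enough since only finitely many indices $j$ occur once $\zeta_2$ is fixed first; the rest is triangle inequality.
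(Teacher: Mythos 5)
Your proof is correct and follows essentially the same strategy as the paper: fix an $(\varepsilon/2,T)$-chain $\zeta_2$ first, use continuity of $\varphi$ and $\sigma$ at the finitely many data of $\zeta_2$ to find $\delta$, obtain a $(\delta,T)$-chain $\zeta_1$, and then shift the base component of $\zeta_2$ to start at the endpoint $\omega^1\cdot S_{n_1}^{(1)}$ of $\zeta_1$, controlling the error by the triangle inequality. The appeal to Remark~\ref{Remark_2T} is superfluous (as you yourself note, finitely many indices suffice), but this does not affect correctness.
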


\begin{proof}
Let $\varepsilon,T>0$. There is a controlled $(\varepsilon/2,T)$-chain from
$(\omega^{2},x^{2})$ to $(\omega^{3},x^{3})$ with times $T_{0}^{2}%
,\dots,T_{n_{2}-1}^{2}\geq T$ and controls $u_{0}^{2},\dots,u_{n_{2}-1}^{2}%
\in\mathcal{U}$. With $S_{0}^{2}:=0,S_{j}^{2}:=T_{0}^{2}+\cdots+T_{j-1}%
^{2},j=1,\dots,n_{2}$, we get
\[
(\omega_{0},x_{0}^{2})=(\omega^{2},x^{2}),(\omega^{2}\cdot S_{1}^{2},x_{1}%
^{2}),\dots,(\omega^{2}\cdot S_{n_{2}}^{2},x_{n_{2}}^{2})=(\omega^{2}\cdot
S_{n_{2}}^{2},x^{3})\in\Omega\times M,
\]
such that $d(\omega^{2}\cdot S_{n_{2}}^{2},\omega^{3})<\varepsilon/2$, and
\[
d(\varphi(T_{j}^{2},\omega^{2}\cdot S_{j}^{2},x_{j}^{2},u_{j}^{2}),x_{j+1}%
^{2})<\varepsilon/2\text{ for }j=0,\dots,n_{2}-1.
\]
By continuity there is $\delta\in(0,\varepsilon)$ such that $d(\omega^{\prime
},\omega^{2})<\delta$ implies $d(\omega^{\prime}\cdot S_{n_{2}}^{2},\omega
^{2}\cdot S_{n_{2}}^{2})<\varepsilon/2$ and, for $j=0,\dots,n-1$,%
\[
d\left(  \varphi(T_{j}^{2},\omega^{\prime}\cdot S_{j}^{2},x_{j}^{2},u_{j}%
^{2}),\varphi(T_{j}^{2},\omega^{2}\cdot S_{j}^{2},x_{j}^{2},u_{j}^{2})\right)
<\varepsilon/2.
\]
It follows that a controlled $(\varepsilon,T)$-chain $\zeta^{2}$ from
$(\omega^{\prime},x^{2})$ to $(\omega^{3},x^{3})$ is given by $T_{j}^{2}$ and
$u_{j}^{2}$ as above and
\[
(\omega_{0}^{\prime},x_{0}^{2})=(\omega^{\prime},x^{2}),(\omega^{\prime}\cdot
S_{1}^{2},x_{1}^{2}),\dots,(\omega^{\prime}\cdot S_{n_{2}}^{2},x_{n}%
^{2})=(\omega^{\prime}\cdot S_{n_{2}}^{2},x^{3}),
\]
such that
\[
d(\omega^{\prime}\cdot S_{n_{2}}^{2},\omega^{3})\leq d(\omega^{\prime}\cdot
S_{n_{2}}^{2},\omega^{2}\cdot S_{n_{2}}^{2})+d(\omega^{2}\cdot S_{n_{2}}%
^{2},\omega^{3})<\varepsilon/2+\varepsilon/2=\varepsilon,
\]
and for $j=0,\dots,n_{2}-1$%
\begin{align*}
&  d(\varphi(T_{j}^{2},\omega^{\prime}\cdot S_{j}^{2},x_{j}^{2},u_{j}%
^{2}),x_{j+1}^{2})\\
&  \leq d(\varphi(T_{j}^{2},\omega^{\prime}\cdot S_{j}^{2},x_{j}^{2},u_{j}%
^{2}),\varphi(T_{j}^{2},\omega^{2}\cdot S_{j}^{2},x_{j}^{2},u_{j}%
^{2}))+d(\varphi(T_{j}^{2},\omega^{2}\cdot S_{j}^{2},x_{j}^{2},u_{j}%
^{2}),x_{j+1}^{2})\\
&  <\varepsilon/2+\varepsilon/2=\varepsilon.
\end{align*}
There is a controlled $(\delta,T)$-chain $\zeta^{1}$ from $(\omega^{1},x^{1})$
to $(\omega^{2},x^{2})$ given by%
\[
T_{0}^{1},\dots,T_{n_{1}-1}^{1}\geq T,u_{0}^{1},\dots,u_{n_{1}-1}^{1}%
\in\mathcal{U}\text{,}%
\]
and, with $\,S_{0}^{1}:=0,S_{j}^{1}:=T_{0}^{1}+\cdots+T_{j-1}^{1}%
,j=1,\dots,n_{1}$, one has $d(\omega^{1}\cdot S_{n_{1}}^{1},\omega^{2}%
)<\delta$ and%
\begin{align*}
\left.  (\omega_{0}^{1},x_{0}^{1})=\right.  (\omega^{1},x^{1}),(\omega
^{1}\cdot S_{1}^{1},x_{1}^{1}),\dots,(\omega^{1}\cdot S_{n_{1}}^{1},x_{n_{1}%
}^{1})  &  =(\omega^{1}\cdot S_{n_{1}}^{1},x^{2}),\\
\text{and }d(\varphi(T_{j}^{1},x_{j}^{1},\omega^{1}\cdot S_{j}^{1},u_{j}%
^{1}),x_{j+1}^{1})  &  <\delta\text{ for }j=0,\dots,n_{1}-1.
\end{align*}
Since $\delta<\varepsilon$ the chain $\zeta^{1}$ is also a controlled
$(\varepsilon,T)$-chain from $(\omega^{1},x^{1})$ to $(\omega^{2},x^{2})$.
Furthermore, since $d(\omega^{1}\cdot S_{n_{1}}^{1},\omega^{2})<\delta$ we may
choose $(\omega^{\prime},x^{2})$ with $\omega^{\prime}:=\omega^{1}\cdot
S_{n_{1}}^{1}$ as starting point for the chain $\zeta^{2}$. Then the
concatenation $\zeta^{2}\circ\zeta^{1}$ is a controlled $(\varepsilon
,T)$-chain from $(\omega^{1},x^{1})$ to $(\omega^{3},x^{3})$.
\end{proof}

A consequence of Lemma \ref{Lemma_concatenation} is the following result.
Recall from (\ref{tau}) that $\tau$ denotes the local flow on $\Omega\times M$
of the uncontrolled system. If $\emptyset\not =\mathcal{K}\subset\Omega\times
M$ is compact and $\tau$-invariant it is called minimal $\tau$-invariant set,
if the restriction of $\tau$ to $\mathcal{K}$ is minimal.

\begin{proposition}
\label{Proposition_contained}(i) Every chain controllable set $E^{0}%
\subset\Omega\times M$ is contained in a maximal chain controllable set.

(ii) Every minimal $\tau$-invariant set $\mathcal{K}\subset\Omega\times M$ is
contained in a maximal chain controllable set.
\end{proposition}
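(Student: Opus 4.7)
The plan is to deduce both parts from Lemma \ref{Lemma_concatenation} by organising chain controllability as an equivalence relation. Define on $\Omega\times M$ the binary relation $(\omega,x)\sim(\overline{\omega},y)$ to mean that for every $\varepsilon,T>0$ there exist controlled $(\varepsilon,T)$-chains both from $(\omega,x)$ to $(\overline{\omega},y)$ and from $(\overline{\omega},y)$ to $(\omega,x)$. Symmetry is built into the definition, and transitivity is exactly Lemma \ref{Lemma_concatenation} applied once in each direction. A set $A\subseteq\Omega\times M$ is then chain controllable precisely when all pairs of its points are $\sim$-related; in particular each point of a chain controllable set is $\sim$-related to itself.

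For (i), fix any $(\omega^{0},x^{0})\in E^{0}$ and set
\[
E:=\{(\omega,x)\in\Omega\times M\mid(\omega,x)\sim(\omega^{0},x^{0})\}.
\]
Since $E^{0}$ is chain controllable, every element of $E^{0}$ is $\sim$-related to $(\omega^{0},x^{0})$, so $E\supseteq E^{0}$. Any two points of $E$ are $\sim$-related to $(\omega^{0},x^{0})$, hence to each other by transitivity, which shows that $E$ is chain controllable. If $F\supseteq E$ is any chain controllable set, then every $(\omega,x)\in F$ is $\sim$-related to $(\omega^{0},x^{0})\in F$, so $F\subseteq E$; thus $E$ is maximal.

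For (ii), by (i) it suffices to prove that $\mathcal{K}$ is itself chain controllable. Because $\mathcal{K}$ is compact and $\tau$-invariant, $\tau|_{\mathcal{K}}$ is a globally defined continuous minimal flow, so for any $s\geq 0$ and any $(\omega,x)\in\mathcal{K}$ the set $\overline{\{\tau(t,\omega,x)\mid t\geq s\}}$ is a nonvoid closed positively $\tau$-invariant subset of $\mathcal{K}$ and must coincide with $\mathcal{K}$. Given $(\omega,x),(\overline{\omega},y)\in\mathcal{K}$ and $\varepsilon,T>0$, take $s=T$ and pick $t\geq T$ with $d(\omega\cdot t,\overline{\omega})<\varepsilon$ and $d(\varphi(t,\omega,x,0),y)<\varepsilon$. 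The single-step chain with $n=1$, $T_{0}=t$, $u_{0}\equiv 0$, $(\omega_{0},x_{0})=(\omega,x)$, $(\omega_{1},x_{1})=(\omega\cdot t,y)$ is then a controlled $(\varepsilon,T)$-chain from $(\omega,x)$ to $(\overline{\omega},y)$; exchanging the roles of the two points yields a chain in the opposite direction, so $\mathcal{K}$ is chain controllable and (i) provides the enveloping maximal chain controllable set.

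The only point requiring any care is the forward-time density claim used in (ii), but this is immediate from the characterisation of minimality recalled in Section \ref{sec:preli}; beyond that, the argument is a direct application of Lemma \ref{Lemma_concatenation}, and no serious obstacle is expected.
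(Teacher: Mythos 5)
Your proof is correct and follows essentially the same route as the paper: both parts hinge on the concatenation lemma (Lemma \ref{Lemma_concatenation}), and your definition of $E$ as the $\sim$-class of a base point $(\omega^0,x^0)$ produces exactly the same set as the paper's ``union of all chain controllable sets containing $E^0$.'' The only difference is cosmetic (organising the argument via a partial equivalence relation rather than taking a union), and in part (ii) you spell out the forward-density argument that the paper compresses into a single sentence.
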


\begin{proof}
(i) Define $E^{\prime}$ as the union of all chain controllable sets containing
$E^{0}$. Let $(\omega^{1},x^{1}),(\omega^{3},x^{3})\in E^{\prime}$ and
$(\omega^{2},x^{2})\in E^{0}$. Then, for all $\varepsilon,T>0$, there are
controlled $(\varepsilon,T)$-chains $\zeta^{1}$ and $\zeta^{2}$ from
$(\omega^{1},x^{1})$ to $(\omega^{2},x^{2})$ and from $(\omega^{2},x^{2})$ to
$(\omega^{3},x^{3})$, respectively. By Lemma \ref{Lemma_concatenation} one
finds for all $\varepsilon,T>0$ controlled $(\varepsilon,T)$-chains from
$(\omega^{1},x^{1})$ to $(\omega^{3},x^{3})$. Hence $E^{\prime}$ is chain
controllable and certainly it is maximal with this property.

(ii) Since every minimal $\tau$-invariant set is chain controllable (with
control $u(t)\equiv0$), the assertion follows by (i).
\end{proof}

In Proposition \ref{Proposition_contained2} we will sharpen the assertions in
Proposition \ref{Proposition_contained} by showing that any chain controllable
set is contained in a chain control set.

The following arguments are similar to those in Kawan's proof (cf.\cite[Proposition 1.24(i)]{Kawan13}) that chain control sets are closed.

\begin{proposition}
\label{Proposition_closed}Let $E$ be a chain control set in $\Omega\times M$.
Then the fibers $E_{\omega},\omega\in\Omega$, of $E$ are closed.
\end{proposition}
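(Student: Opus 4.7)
The plan is to set $\overline{E}:=\bigcup_{\omega\in\Omega}\{\omega\}\times\overline{E_{\omega}}$ and argue by maximality: since $E\subset\overline{E}$ by construction, if $\overline{E}$ satisfies conditions (i) and (ii) in Definition \ref{Etotal}, then maximality of $E$ forces $\overline{E}=E$, whence every $E_{\omega}$ is closed in $M$. So the real content is checking (i) and (ii) for $\overline{E}$; it suffices to consider points $(\omega,x)\in\overline{E}$ with $x_{n}\to x$ for some sequence $x_{n}\in E_{\omega}$.

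For condition (i), I would exploit that $\mathcal{U}$ is compact metric in the weak$^{\ast}$ topology. For each $n$ there exists $u_{n}\in\mathcal{U}$ with $\psi(t,\omega,x_{n},u_{n})\in E$ for all $t\in\mathbb{R}$. Extracting a subsequence, $u_{n}\to u$ in $\mathcal{U}$. By the joint continuity of the cocycle (part (ii) of the preceding theorem), for every fixed $t\in\mathbb{R}$,
\[
\varphi(t,\omega,x_{n},u_{n})\longrightarrow\varphi(t,\omega,x,u),
\]
while the base component $\omega\cdot t$ does not depend on $n$. Since $\varphi(t,\omega,x_{n},u_{n})\in E_{\omega\cdot t}$, the limit lies in $\overline{E_{\omega\cdot t}}$, so $\psi(t,\omega,x,u)\in\overline{E}$ for every $t$, verifying (i). (Globality of $\Phi$, assumed for this section, is what allows evaluation at all $t\in\mathbb{R}$.)

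For condition (ii), take $(\omega,x),(\bar{\omega},y)\in\overline{E}$ with approximating sequences $x_{n}\to x$ in $E_{\omega}$ and $y_{m}\to y$ in $E_{\bar{\omega}}$, and fix $\varepsilon,T>0$. Choose a controlled $(\varepsilon/2,T)$-chain from $(\omega,x_{n})$ to $(\bar{\omega},y_{m})$, with data $T_{0},\dots,T_{k-1}$, $u_{0},\dots,u_{k-1}$ and intermediate points $(\omega_{j},x_{j})$, where $x_{0}=x_{n}$ and $x_{k}=y_{m}$. I would now keep all times $T_{j}$, controls $u_{j}$ and intermediate points $(\omega_{j},x_{j})$ for $1\leq j\leq k-1$ unchanged and replace the first and last points by $(\omega,x)$ and $(\bar{\omega},y)$ respectively. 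By the same continuity argument used in Lemma \ref{Lemma_concatenation}, $\varphi(T_{0},\omega,x,u_{0})$ is close to $\varphi(T_{0},\omega,x_{n},u_{0})$ when $x_{n}$ is close to $x$; for $n$ large enough,
\[
d\bigl(\varphi(T_{0},\omega,x,u_{0}),x_{1}\bigr)\leq d\bigl(\varphi(T_{0},\omega,x,u_{0}),\varphi(T_{0},\omega,x_{n},u_{0})\bigr)+\varepsilon/2<\varepsilon.
\]
At the other end, for $m$ large we get $d(\varphi(T_{k-1},\omega_{k-1},x_{k-1},u_{k-1}),y)\leq\varepsilon/2+d(y_{m},y)<\varepsilon$, while the base estimate $d(\omega_{k},\bar{\omega})<\varepsilon/2<\varepsilon$ is unaffected. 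The modified object is then a controlled $(\varepsilon,T)$-chain from $(\omega,x)$ to $(\bar{\omega},y)$.

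The only genuine obstacle is (i): one must pass to a limit in a controlled trajectory defined for \emph{all} $t\in\mathbb{R}$, and the natural way to do so is via weak$^{\ast}$ compactness of $\mathcal{U}$ combined with the joint continuity of $\varphi$ in $(\omega,x,u)$. The chain condition (ii) reduces to the perturbation technique already used in Lemma \ref{Lemma_concatenation}, adapted simultaneously at the start and end of the chain.
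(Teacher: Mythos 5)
Your overall strategy — set $\overline{E}:=\bigcup_{\omega}\{\omega\}\times\overline{E_{\omega}}$, show it satisfies conditions (i) and (ii) of Definition \ref{Etotal}, and invoke maximality — is exactly the paper's. Your verification of (i) is also essentially identical to the paper's (weak$^{\ast}$ compactness of $\mathcal{U}$, pass to a convergent subsequence of controls, use continuity of $\varphi$). The same is true for the endpoint modification in (ii): the estimate $d(\varphi(T_{k-1},\cdot),y)\le\varepsilon/2+d(y_m,y)$ requires only that $m$ be large, independently of the chain chosen, so you may fix $m$ first and then pick the chain — no problem there.

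The gap is in the modification of the \emph{initial} segment in (ii). You fix $n$, choose a controlled $(\varepsilon/2,T)$-chain from $(\omega,x_n)$ to $(\bar\omega,y_m)$ with data $T_0,u_0,\dots$, and then want $d\bigl(\varphi(T_0,\omega,x,u_0),\varphi(T_0,\omega,x_n,u_0)\bigr)<\varepsilon/2$ "for $n$ large enough." But the modulus of continuity you need depends on $T_0$ and $u_0$, which depend on the chain, which was chosen starting from $x_n$ — so you cannot now go back and demand that $d(x,x_n)$ be small relative to that modulus. In particular, $T_0\ge T$ is \emph{not} bounded above a priori, so the chains from $x_n$ could have $T_0^{(n)}\to\infty$ as $n\to\infty$, in which case the required $\delta$ could shrink faster than $d(x_n,x)$. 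This is a genuine circularity, not a cosmetic one. Two repairs are available: (a) invoke Remark \ref{Remark_2T} to arrange $T_0\in[T,2T]$ and then use uniform continuity of $\varphi$ on a compact set of the form $[T,2T]\times\{\omega\}\times\overline{\mathbf{B}_r(x)}\times\mathcal{U}$ to get a $\delta$ independent of $T_0,u_0$ \emph{before} choosing the chain; or (b) do what the paper does, which sidesteps the perturbation entirely: use the already-established property (i) to produce a control $u_0$ driving $(\omega,x)$ \emph{exactly} to some $(\omega\cdot T,x_1)\in\overline{E}$, pick $x_2\in E_{\omega\cdot T}$ with $d(x_1,x_2)<\varepsilon$, take an $(\varepsilon/2,T)$-chain from $(\omega\cdot T,x_2)$ to $(\bar\omega,y_2)$ with $y_2\in E_{\bar\omega}$ close to $y$, and prepend the exact segment $\psi(\cdot,\omega,x,u_0)$ on $[0,T]$. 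The jump at time $T$ is then $d(x_1,x_2)<\varepsilon$, with no continuity estimate on a perturbed trajectory needed. As written, your argument for (ii) does not close.
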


\begin{proof}
We prove the assertion by showing that the set $E^{1}:=\bigcup_{\omega
\in\Omega}\mathrm{cl}E_{\omega}$ satisfies the properties (i) and (ii) of
chain control sets. By maximality of $E$, it then follows that $E=E^{1}$, and
hence $E_{\omega}=\mathrm{cl}E_{\omega}$ for all $\omega\in\Omega$.

(i) For every $x\in\mathrm{cl}E_{\omega}$ there is a sequence $x_{n}\in
E_{\omega}$ converging to $x$. By property (i) of chain control sets, for
every $x_{n}$ there exists $u_{n}\in\mathcal{U}$ with $\psi(t,\omega
,x_{n},u_{n})\in E$ for all $t\in\mathbb{R}$. By compactness of $\mathcal{U}$
we may assume that $u_{n}$ converges to $u_{0}\in\mathcal{U}$. Then continuity
implies $\varphi(t,\omega,x,u_{0})\in\mathrm{cl}E_{\omega\cdot t}$ for all
$t\in\mathbb{R}$. Hence, $E^{1}$ satisfies property (i).

(ii) Let $(\omega,x),(\overline{\omega},y)\in E^{1}$ and $\varepsilon,T>0$. As
shown in (i) there is a control function $u_{0}\in\mathcal{U}$ such that
$(\omega_{1},x_{1}):=\psi(T,\omega,x,u_{0})\in E^{1}$ with $\omega_{1}%
=\omega\cdot T$. Since $x_{1}\in\mathrm{cl}E_{\omega\cdot T}$ there is
$x_{2}\in E_{\omega\cdot T}$ with $d(x_{1},x_{2})<\varepsilon$ and
$y\in\mathrm{cl}E_{\overline{\omega}}$ implies that there is $y_{2}\in
E_{\overline{\omega}}$ with $d(y,y_{2})<\varepsilon/2$. Then there is a
controlled $(\varepsilon/2,T)$-chain $\zeta$ from $(\omega\cdot T,x_{2})$ to
$(\overline{\omega},y_{2})$. Now add the point $(\omega,x)$ with control
$u_{0}$ on $[0,T]$ in the beginning of the controlled chain $\zeta$ and
replace the final point by $y$. This is a controlled $(\varepsilon,T)$-chain
from $(\omega,x)$ to $(\overline{\omega},y)$. Hence, $E^{1}$ also satisfies
property (ii).
\end{proof}

If the flow $\sigma$ on $\Omega$ is equicontinuous (cf. Theorem
\ref{Theorem_minimal}), the following stronger result holds.

\begin{proposition}
\label{Proposition_closed2}Assume that the minimal flow $\sigma$ on $\Omega$
is equicontinuous. Let $E\subset\Omega\times Q$ be a chain control set, where
$Q\subset M$ is compact, and suppose that $E$ is chain controllable in
$\Omega\times Q$. Then it follows that $E$ is compact.
\end{proposition}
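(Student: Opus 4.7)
Since $E \subset \Omega \times Q$ is contained in the compact set $\Omega \times Q$, it suffices to show that $E$ is closed; then compactness follows. I would mimic the strategy of Proposition \ref{Proposition_closed}: set $\bar{E}:=\mathrm{cl}(E)$ and check that $\bar{E}$ satisfies the two defining properties of a chain control set; by maximality of $E$ this forces $\bar{E}=E$. Property (i) for $\bar{E}$ is exactly the argument of Proposition \ref{Proposition_closed}(i): for $(\omega,x)\in\bar{E}$ take an approximating sequence $(\omega_n,x_n)\in E$ with associated controls $u_n\in\mathcal{U}$ satisfying $\psi(t,\omega_n,x_n,u_n)\in E\subset\bar{E}$ for all $t\in\mathbb{R}$, extract a weak$^{\ast}$-limit $u_n\to u_0$, and use continuity of $\Phi$ to conclude $\psi(t,\omega,x,u_0)\in\bar{E}$ for all $t\in\mathbb{R}$. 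Closedness of $\bar{E}$ is the only reason one does not already obtain this for $E$ itself in that proposition.

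For property (ii), fix $(\omega,x),(\bar{\omega},y)\in\bar{E}$ and $\varepsilon,T>0$. The set $[T,2T]\times\Omega\times Q\times\mathcal{U}$ is compact (Theorem \ref{Theorem_minimal} on $\mathcal{U}$, compactness of $\Omega$ and $Q$), so by uniform continuity of $\varphi$ there exists $\eta\in(0,\varepsilon/4)$ with
\[
d((\omega_a,x_a),(\omega_b,x_b))<\eta\ \Longrightarrow\ d(\varphi(t,\omega_a,x_a,u),\varphi(t,\omega_b,x_b,u))<\varepsilon/4
\]
for every $t\in[T,2T]$ and $u\in\mathcal{U}$. Equicontinuity of $\sigma$ on $\Omega$ supplies $\delta\in(0,\eta)$ such that $d(\omega_a,\omega_b)<\delta$ implies $d(\omega_a\cdot t,\omega_b\cdot t)<\eta$ for \emph{all} $t\in\mathbb{R}$. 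Choose $(\omega',x')\in E$ with $d((\omega',x'),(\omega,x))<\delta$ and $(\omega'',y')\in E$ with $d((\omega'',y'),(\bar{\omega},y))<\varepsilon/4$, and, using that $E$ is chain controllable in $\Omega\times Q$, pick a controlled $(\varepsilon/4,T)$-chain from $(\omega',x')$ to $(\omega'',y')$ with points $(\omega'\cdot S_j,x_j')$, $x_j'\in Q$, times $T_j\in[T,2T]$ (by Remark \ref{Remark_2T}), controls $u_j$, and $x_n'=y'$. Define a candidate chain for $(\omega,x)\to(\bar{\omega},y)$ by retaining the $T_j$ and $u_j$, replacing the base points with $\omega\cdot S_j$, and setting $\tilde{x}_0:=x$, $\tilde{x}_j:=x_j'$ for $1\le j\le n-1$, $\tilde{x}_n:=y$.

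The verification then splits each jump error via the triangle inequality into the original $\varepsilon/4$-error plus a perturbation error, where the latter is controlled by the chosen $\eta$: for $j=0$ through $d((\omega,x),(\omega',x'))<\delta<\eta$, for $1\le j\le n-1$ through $d(\omega\cdot S_j,\omega'\cdot S_j)<\eta$ (equicontinuity), and at $j=n-1$ one adds the extra term $d(y',y)<\varepsilon/4$. A parallel estimate $d(\omega\cdot S_n,\bar{\omega})\le d(\omega\cdot S_n,\omega'\cdot S_n)+d(\omega'\cdot S_n,\omega'')+d(\omega'',\bar{\omega})<\eta+\varepsilon/4+\varepsilon/4<\varepsilon$ handles the terminal base point. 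The main point, and what I expect to be the only delicate step, is exactly this uniform control of the base drift $d(\omega\cdot S_j,\omega'\cdot S_j)$ along a chain of a priori unbounded length $n$ and total time $S_n$; without equicontinuity that drift could grow with $j$ and destroy the late jumps, whereas equicontinuity bounds it uniformly by $\eta$ from a single choice of $\delta$. Once property (ii) is established, maximality of the chain control set $E$ yields $E=\bar{E}$, so $E$ is closed and hence compact.
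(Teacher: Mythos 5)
Your proof is correct and takes essentially the same approach as the paper: pass to $\mathrm{cl}\,E$, verify properties (i) and (ii) of chain control sets using equicontinuity of $\sigma$ together with uniform continuity of $\varphi$ on the compact set $[0,2T]\times\Omega\times Q\times\mathcal{U}$, and conclude $E=\mathrm{cl}\,E$ by maximality. The only differences are cosmetic: you skip the paper's preliminary step through property (i) by directly approximating $(\omega,x)$ with a point of $E$ and perturbing both coordinates in the first segment, and the compactness of $\mathcal{U}$ you invoke comes from Lemma \ref{Lemma_metric}, not Theorem \ref{Theorem_minimal}.
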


\begin{proof}
We prove the assertion by showing that the set $\mathrm{cl}E$ satisfies the
properties (i) and (ii) of chain control sets. By maximality of $E$, it then
follows that $E=\mathrm{cl}E$ and hence $E$ is compact.

(i) For every $(\omega,x)\in\mathrm{cl}E$ there is a sequence $(\omega
_{n},x_{n})\in E$ converging to $(\omega,x)$. For every $(\omega_{n},x_{n})$
there exists $u_{n}\in\mathcal{U}$ with $\psi(t,\omega_{n},x_{n},u_{n})\in E$
for all $t\in\mathbb{R}$. By compactness of $\mathcal{U}$ we may assume that
$u_{n}$ converges to $u_{0}\in\mathcal{U}$. Then continuity of $\psi$ implies
$\psi(t,\omega,x,u_{0})\in\mathrm{cl}E$ for all $t\in\mathbb{R}$. Hence,
$\mathrm{cl}E$ satisfies (i).

(ii) Let $(\omega,x),(\overline{\omega},y)\in\mathrm{cl}E$ and $\varepsilon
,T>0$. By continuity of $\psi$ and compactness of $\Omega,Q$, and
$\mathcal{U}$ there is $\delta\in(0,\varepsilon/3)$ such that for all $z\in
Q,\omega^{\prime},\omega^{\prime\prime}\in\Omega$, and $u\in\mathcal{U}$%
\begin{equation}
d(\omega^{\prime},\omega^{\prime\prime})<\delta\text{ implies }d\left(
\varphi(t,\omega^{\prime},z,u),\varphi(t,\omega^{\prime\prime},z,u)\right)
<\varepsilon/3\text{ for }t\in\lbrack0,2T]. \label{uniform0}%
\end{equation}
By (i) there is a control function $u_{0}\in\mathcal{U}$ such that
$(\omega\cdot T,x_{1}):=\psi(T,\omega,x,u_{0})\allowbreak\in\mathrm{cl}E$.
Since $(\omega\cdot T,x_{1})\in\mathrm{cl}E$ there is $(\omega_{2},x_{2})\in
E$ with $d((\omega\cdot T,x_{1}),(\omega_{2},x_{2}))<\delta$. Similarly, for
$(\overline{\omega},y)\in\mathrm{cl}E$ there is $(\widetilde{\omega
},\widetilde{y})\in E$ with $d((\overline{\omega},y),(\widetilde{\omega
},\widetilde{y}))<\varepsilon/3$. There is a controlled $(\delta,T)$-chain
$\zeta$ from $(\omega_{2},x_{2})$ to $(\widetilde{\omega},\widetilde{y})$
given by $(\omega_{2},x_{2}),\allowbreak(\omega_{3},x_{3}),\dots,(\omega
_{n},x_{n})=(\omega_{n},\widetilde{y})\in\Omega\times M$, controls
$u_{2},\dots,u_{n-1}\in\mathcal{U}$, and times $T_{2},\dots,T_{n-1}\geq T$. By
assumption, the points $x_{i}$ may be chosen in $Q$. By Remark \ref{Remark_2T}%
, we may suppose that the jump times $T_{j}$ are in $[T,2T]$.

Now add the point $(\omega,x)$ with control $u_{0}$ on $[0,T]$ in the
beginning of the controlled chain $\zeta$ and replace the final point by
$(\overline{\omega},y)$. Since $d(\omega\cdot T,\omega_{2})<\delta$
equicontinuity of $\sigma$ implies that $d(\omega\cdot(T+S_{j}),\omega
_{2}\cdot S_{j})<\delta$ for all $j$, and%
\begin{align*}
d(\omega\cdot(T+S_{n}),\overline{\omega})  &  \leq d(\omega\cdot
(T+S_{n}),\omega_{2}\cdot S_{n})+d(\omega_{2}\cdot S_{n},\widetilde{\omega
})+d(\widetilde{\omega},,\overline{\omega})\\
&  <\delta+\delta+\varepsilon/3<\varepsilon.
\end{align*}
Since $x_{j}\in Q$ for all $j$ it follows by (\ref{uniform0}) that%
\begin{align*}
&  d\left(  \varphi(T_{j},\omega\cdot\left(  T+S_{j}\right)  ,x_{j}%
,u_{j}),x_{j+1}\right) \\
&  \leq d\left(  \varphi(T_{j},\omega\cdot\left(  T+S_{j}\right)  ,x_{j}%
,u_{j}),\varphi(T_{j},\omega_{2}\cdot S_{j},x_{j},u_{j})\right) \\
&  \qquad+d\left(  \varphi(T_{j},\omega_{2}\cdot S_{j},x_{j},u_{j}%
),x_{j+1}\right)  <\varepsilon/3+\delta<\varepsilon.
\end{align*}
Thus, this yields a controlled $(\varepsilon,T)$-chain from $(\omega,x)$ to
$(\overline{\omega},y)$, and hence $\mathrm{cl}E$ also satisfies property (ii)
of chain control sets.
\end{proof}

If $\sigma$ is not equicontinuous, Theorem \ref{Theorem_minimal} implies that
it is sensitive with respect to initial conditions. Hence one will not expect
that chain control sets are closed. An example of a minimal equicontinuous
flow is the Kronecker flow on the torus.

\begin{example}
Let $\Omega=\mathbb{T}^{2}$ be the $2$-torus and let $\gamma=(\gamma
_{1},\gamma_{2})\in\mathbb{R}^{2}$ be a vector whose components are rationally
independent (e.g. $\gamma=(1,\sqrt{2})$). For $\omega\in\Omega$, write
$\omega=(\exp2\pi\imath\psi_{1},\exp2\pi\imath\psi_{2})$ and define, for each
$t\in\mathbb{R}$, the map
\[
\tau(t,\omega)=\tau_{t}(\omega)=(\exp2\pi\imath(\psi_{1}+\gamma_{1}t),\exp
2\pi\imath(\psi_{2}+\gamma_{2}t)).
\]
We write $\psi=(\psi_{1},\psi_{2})$ and $\tau(t,\psi)=\tau_{t}(\psi
)=\psi+\omega t=(\psi_{1}+\omega_{1}t,\psi_{2}+\omega_{2}t)$. One says that
$(\Omega,\tau)$ is a quasi-periodic flow or a Kronecker flow (or a Kronecker
winding) on the torus. It is minimal and almost periodic. The real numbers
$\gamma_{1},\gamma_{2}$ are called the frequencies of the flow. This is an
example of a continuous minimal and uniquely ergodic flow.

\end{example}


We turn to analyze chain reachability.

\begin{definition}
The chain reachable set from $(\omega,x)\in\Omega\times M$ is%
\[
\mathbf{R}^{c}(\omega,x)=\left\{  (\overline{\omega},y)\in\Omega\times
M\left\vert
\begin{array}
[c]{c}%
\forall\varepsilon,T>0~\exists~\text{controlled }(\varepsilon,T)\text{-chain}%
\\
\text{ from }(\omega,x)\text{ to }(\overline{\omega},y)
\end{array}
\right.  \right\}  .
\]

\end{definition}

\begin{proposition}
\label{Proposition_inv}(i) Let $(\omega,x)\in\Omega\times M$. Then for all
$(\overline{\omega},y)\in\mathbf{R}^{c}(\omega,x)$ and all $u\in\mathcal{U}$
it follows that $(\overline{\omega}\cdot\tau,\varphi(\tau,\overline{\omega
},y,u))\in\mathbf{R}^{c}(\omega,x)$ for all $\tau>0$. Furthermore, there
exists $v\in\mathcal{U}$ such that $(\overline{\omega}\cdot\tau,\varphi
(\tau,\overline{\omega},y,v))\in\mathbf{R}^{c}(\omega,x)$ for all $\tau<0$.

(ii) Let $F\subset\Omega\times Q$ be a maximal chain controllable set in
$\Omega\times Q$ where $Q\subset M$ is compact. Then, for all $(\omega,x)\in
F$ and $\tau\in\mathbb{R}$, there exists $v\in\mathcal{U}$ such that
$(\omega\cdot\tau,\varphi(\tau,\omega,x,v))\in F$. It follows that $F$ is a
chain control set.
\end{proposition}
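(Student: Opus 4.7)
For the forward direction of (i), I would lengthen the last step of a fine chain. Given $\varepsilon,T>0$, $u\in\mathcal{U}$, and $\tau>0$, pick an $(\varepsilon',T)$-chain $(\omega_{j},x_{j})$, $0\le j\le n$, from $(\omega,x)$ to $(\overline{\omega},y)$ with $\varepsilon'$ to be chosen, and replace the last step-time $T_{n-1}$ by $T_{n-1}+\tau\ge T$ and the last control by the concatenation of $u_{n-1}$ on $[0,T_{n-1}]$ with $u$ on $[T_{n-1},T_{n-1}+\tau]$. The new terminal $\omega$-coordinate is $\omega_{n}\cdot\tau$ and the pre-jump value is $\varphi(\tau,\omega_{n},\varphi(T_{n-1},\omega_{n-1},x_{n-1},u_{n-1}),u)$; continuity of $\sigma_{\tau}$ at $\omega_{n}$ and of $\varphi(\tau,\cdot,\cdot,u)$ at $(\overline{\omega},y)$ lets me choose $\varepsilon'$ so small that both remaining gaps stay below $\varepsilon$.

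For the backward direction of (i), I first observe that $\mathbf{R}^{c}(\omega,x)$ is closed: if $(\overline{\omega}_{k},y_{k})\to(\overline{\omega},y)$ in $\mathbf{R}^{c}(\omega,x)$, then an $(\varepsilon/2,T)$-chain ending at $(\overline{\omega}_{k},y_{k})$ becomes an $(\varepsilon,T)$-chain ending at $(\overline{\omega},y)$ after replacing only its terminal point, for $k$ large. Next, for each $n$, take a $(1/n,n)$-chain from $(\omega,x)$ to $(\overline{\omega},y)$; its penultimate point $(\omega^{n},x^{n})$ lies in $\mathbf{R}^{c}(\omega,x)$ by truncation, and I write its last control as $u^{n}\in\mathcal{U}$, its last time as $T^{n}\ge n$, $\tilde{y}^{n}:=\varphi(T^{n},\omega^{n},x^{n},u^{n})$, and $v_{n}:=\theta_{T^{n}}u^{n}$. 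The cocycle identity gives, for every $t\in[0,n]$,
\[
\varphi(-t,\omega^{n}\!\cdot\! T^{n},\tilde{y}^{n},v_{n})=\varphi(T^{n}-t,\omega^{n},x^{n},u^{n}),
\]
which is a forward image of $(\omega^{n},x^{n})\in\mathbf{R}^{c}(\omega,x)$ in time $T^{n}-t\ge 0$ and hence lies in $\mathbf{R}^{c}(\omega,x)$ by the forward statement just proved. Extracting a weak$^{*}$-convergent subsequence $v_{n}\to v$ (compactness of $\mathcal{U}$) and using continuity of $\varphi$ together with closedness of $\mathbf{R}^{c}(\omega,x)$ yields $(\overline{\omega}\cdot(-t),\varphi(-t,\overline{\omega},y,v))\in\mathbf{R}^{c}(\omega,x)$ for every $t\ge 0$, i.e.\ for every $\tau<0$.

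For (ii), I first establish that $F$ has closed fibers by running the argument of Proposition \ref{Proposition_closed} on $F^{1}:=\bigcup_{\omega}\mathrm{cl}\,F_{\omega}\subseteq\Omega\times Q$ (using compactness of $Q$) and appealing to maximality. Given $(\omega,x)\in F$ and $\tau\in\mathbb{R}$, part (i) supplies $v\in\mathcal{U}$ with $\psi(\tau,\omega,x,v)\in\mathbf{R}^{c}(\omega,x)\supseteq F$, and I then argue via Lemma \ref{Lemma_concatenation} that $F\cup\{\psi(\tau,\omega,x,v)\}$ is still chain controllable in $\Omega\times Q$, so maximality of $F$ forces $\psi(\tau,\omega,x,v)\in F$. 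To upgrade this $\tau$-by-$\tau$ statement to property (i) of a chain control set, I diagonalize: for each $n$ produce $v_{n}\in\mathcal{U}$ keeping the orbit of $(\omega,x)$ inside $F$ on $[-n,n]$, extract a convergent subsequence, and use continuity of $\psi$ and fiber-closedness of $F$ to obtain $u^{*}\in\mathcal{U}$ with $\psi(t,\omega,x,u^{*})\in F$ for every $t\in\mathbb{R}$. Maximality of $F$ as a chain control set is inherited from its maximality in $\Omega\times Q$.

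The main obstacle is the chain-controllability check for $F\cup\{\psi(\tau,\omega,x,v)\}$ in part (ii) when $\tau>0$: one needs a forward chain \emph{from} the forward-flow image $\psi(\tau,\omega,x,v)$ back into $F$, which is not directly produced by (i). I plan to handle this by combining minimality of $\sigma$ on $\Omega$ (so that the $\omega$-coordinate can be returned to any prescribed neighborhood of a target in $F$) with part (i) backward and Lemma \ref{Lemma_concatenation} to close the chain.
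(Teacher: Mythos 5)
Your forward direction of (i) matches the paper's argument (prolonging the final segment by $\tau$ with control $u$ and using continuity to absorb the error), and your preliminary observation that $\mathbf{R}^{c}(\omega,x)$ is closed is correct, although the paper does not isolate it.

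The backward direction of (i) has a genuine gap. You assert that the penultimate point $(\omega^{n},x^{n})$ of a $(1/n,n)$-chain ``lies in $\mathbf{R}^{c}(\omega,x)$ by truncation.'' Truncation gives one controlled $(1/n,n)$-chain ending exactly at $(\omega^{n},x^{n})$, but membership in $\mathbf{R}^{c}(\omega,x)$ requires controlled $(\varepsilon,T)$-chains for \emph{every} $\varepsilon,T>0$; for $\varepsilon<1/n$ you have nothing. Since the whole subsequent argument (``which is a forward image of $(\omega^{n},x^{n})\in\mathbf{R}^{c}(\omega,x)$\dots and hence lies in $\mathbf{R}^{c}(\omega,x)$'') rests on this, the conclusion does not follow. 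The paper circumvents this entirely: it extracts $v$ as the weak$^{*}$ limit of the shifted final controls $u_{n^{k}-1}^{k}(T_{n^{k}-1}^{k}+\cdot)$ and then, for each prescribed $\varepsilon,T$, directly builds a controlled $(\varepsilon,T)$-chain from $(\omega,x)$ to $(\overline{\omega}\cdot\tau,\varphi(\tau,\overline{\omega},y,v))$ by shortening the final segment of $\zeta^{k}$ for $k$ large; no intermediate membership claim is needed.

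In (ii) you correctly identify the hard point for $\tau>0$, namely producing controlled chains \emph{from} $\psi(\tau,\omega,x,v)$ \emph{back to} $F$, but the proposed remedy (minimality of $\sigma$ on $\Omega$) is not on the right track: minimality only lets you steer the $\omega$-component close to a target, not the $x$-component, so it cannot close the chain. The paper's solution is to choose $v$ cleverly, not arbitrarily: it takes controlled $(\varepsilon^{k},T^{k})$-chains $\zeta^{k}$ from $(\omega,x)$ to a fixed $(\overline{\omega},y)\in F$, lets $v$ be the weak$^{*}$ limit of their \emph{initial} controls $u_{0}^{k}$, and then replaces the initial segment of $\zeta^{k}$ by the segment starting at $(\omega\cdot\tau,\varphi(\tau,\omega,x,v))$; this yields the required chains back into $F$. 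Also, your diagonalization step presupposes controls $v_{n}$ that keep the orbit in $F$ on \emph{all} of $[-n,n]$, but the $\tau$-by-$\tau$ statement you aim to prove only gives one $v$ per single $\tau$; the paper instead shows that the two controls it constructs work simultaneously for all negative (resp.\ all positive) times and glues them at $t=0$, so no diagonal argument is needed.
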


\begin{proof}
(i) Let $(\overline{\omega},y)\in\mathbf{R}^{c}(\omega,x)$ and $u\in
\mathcal{U}$. Fix $\tau>0$ and $\varepsilon,T>0$. By continuity of $\varphi$,
there is $\delta\in(0,\varepsilon)$ such that $d(\omega^{\prime}%
,\overline{\omega})<\delta$ and $d(y^{\prime},y)<\delta$ implies that%
\begin{equation}
\label{DELTA}
d(\omega^{\prime}\cdot\tau,\overline{\omega}\cdot\tau)<\varepsilon\text{ and
}d\left(  \varphi(\tau,\omega^{\prime},y^{\prime},u),\varphi(\tau
,\overline{\omega},y,u)\right)  <\varepsilon.
\end{equation}
There is a controlled $(\delta,T)$-chain $\zeta$ from $(\omega,x)$ to
$(\overline{\omega},y)$. We prolong the final segment of $\zeta$ by defining
$T_{n-1}^{\prime}:=T_{n-1}+\tau,\omega_{n}^{\prime}:=\omega_{n}\cdot\tau$, and
by defining a control $u_{n-1}^{\prime}$ by%
\[
u_{n-1}^{\prime}(t):=\left\{
\begin{array}
[c]{lll}%
u_{n-1}(t) & \text{for} & t\in\lbrack0,T_{n-1}]\\
u(t-T_{n-1}) & \text{for} & t\in(T_{n-1},T_{n-1}+\tau]
\end{array}
\right.  .
\]
Since $d(\omega_{n},\overline{\omega})<\delta$ and $d\left(  \varphi
(T_{n-1},\omega_{n-1},x_{n-1},u_{n-1}),y\right)  <\delta$ it follows by \eqref{DELTA} that
$d(\omega_{n}^{\prime},\overline{\omega}\cdot\tau)=d(\omega_{n}\cdot
\tau,\overline{\omega}\cdot\tau)<\varepsilon$ and%
\begin{align*}
&  d(\varphi(T_{n-1}^{\prime},\omega_{n-1},x_{n-1},u_{n-1}^{\prime}%
),\varphi(\tau,\overline{\omega},y,u))\\
&  =d(\varphi(\tau,\omega_{n},\varphi(T_{n-1},\omega_{n-1},x_{n-1}%
,u_{n-1}),u),\varphi(\tau,\overline{\omega},y,u))<\varepsilon.
\end{align*}
Hence, with this new final segment, we obtain a controlled $(\varepsilon
,T)$-chain from $(\omega,x)$ to $\allowbreak(\overline{\omega}\cdot
\tau,\varphi(\tau,\overline{\omega},y,u))$.

In the case of negative time $\tau$, consider for a sequences $\varepsilon
^{k}\rightarrow0$ and $T^{k}\rightarrow\infty$ controlled $\left(
\varepsilon^{k},T^{k}\right)  $-chains $\zeta^{k}$ from $(\omega,x)$ to
$(\overline{\omega},y)$. Let the final segments of the chains $\zeta^{k}$ be
given by $(\omega_{n^{k}-1}^{k}\cdot t,\varphi(t,\omega_{n^{k}-1}^{k}%
,x_{n^{k}-1}^{k},u_{n^{k}-1}^{k})),t\in\lbrack0,T_{n^{k}-1}^{k}]$. Without
loss of generality, $u_{n^{k}-1}^{k}(T_{n^{k}-1}^{k}+\cdot)$ converges to some
control $v\in\mathcal{U}$ and $\omega_{n^{k}-1}^{k}$ converges to some
$\omega^{\prime}\in\Omega$. For $k\rightarrow\infty$, it follows that
\begin{align*}
\left.  d(\omega_{n^{k}}^{k},\overline{\omega})=\right.  d(\omega_{n^{k}%
-1}^{k}\cdot T_{n^{k}-1}^{k},\overline{\omega})  &  <\varepsilon
^{k}\rightarrow0,\\
d\bigl(\varphi(T_{n^{k}-1}^{k},\omega_{n^{k}-1}^{k},x_{n^{k}-1}^{k}%
,u_{n^{k}-1}^{k}),y\bigr)  &  <\varepsilon^{k}\rightarrow0.
\end{align*}
It follows that%
\begin{align*}
&  \varphi(\tau+T_{n^{k}-1}^{k},\omega_{n^{k}-1}^{k},x_{n^{k}-1}^{k}%
,u_{n^{k}-1}^{k})\\
&  =\varphi\bigl(\tau,\omega_{n^{k}-1}^{k}\cdot T_{n^{k}-1}^{k},\varphi
(T_{n^{k}-1}^{k},\omega_{n^{k}-1}^{k},x_{n^{k}-1}^{k},u_{n^{k}-1}%
^{k}),u_{n^{k}-1}^{k}\left(  T_{n^{k}-1}^{k}+\cdot\right)  \bigr)\\
&  \rightarrow\varphi(\tau,\overline{\omega},y,v).
\end{align*}
We claim that $(\overline{\omega}\cdot\tau,\varphi(\tau,y,\overline{\omega
},v))\in\mathbf{R}^{c}(\omega,x)$. For the proof, we construct for all
$\varepsilon,T>0$ controlled $(\varepsilon,T)$-chains from $(\omega,x)$ to
$(\overline{\omega}\cdot\tau,\varphi(\tau,\overline{\omega},y,v))$. Let
$\varepsilon,T>0$. For $k$ large enough, the times satisfy $T_{j}^{k}\geq T$
and $\tau+T_{n^{k}-1}^{k}\geq T$. There is $\delta>0$ such that, for
$(\widetilde{\omega},\widetilde{y},\widetilde{u})\in\Omega\times
M\times\mathcal{U}$,%
\[
d(\widetilde{\omega},\overline{\omega})<\delta,d(\widetilde{y},y)<\delta
,d(\widetilde{u},v)<\delta
\]
implies%
\[
d(\widetilde{\omega}\cdot\tau,\overline{\omega}\cdot\tau)<\varepsilon\text{
and }d\left(  \varphi(\tau,\widetilde{\omega},\widetilde{y},\widetilde
{u}),\varphi(\tau,\overline{\omega},y,v)\right)  <\varepsilon.
\]
For $k$ large enough, it holds that $\varepsilon^{k}<\delta$, and hence
$d(\omega_{n^{k}-1}^{k}\cdot T_{n^{k}-1}^{k},\overline{\omega})<\delta$ and%
\[
d\left(  \varphi(T_{n^{k}-1}^{k},\omega_{n^{k}-1}^{k},x_{n^{k}-1}^{k}%
,u_{n^{k}-1}^{k}),y\right)  <\delta,d(u_{n^{k}-1}^{k}\left(  T_{n^{k}-1}%
^{k}+\cdot\right)  ,v)<\delta.
\]
It follows that $d(\omega_{n^{k}-1}^{k}\cdot(T_{n^{k}-1}^{k}+\tau
),\overline{\omega}\cdot\tau)<\varepsilon$ and%
\begin{align*}
&  d\left(  \varphi(\tau+T_{n^{k}-1}^{k},\omega_{n^{k}-1}^{k},x_{n^{k}-1}%
^{k},u_{n^{k}-1}^{k}),\varphi(\tau,\overline{\omega},y,v)\right) \\
&  =d\bigl(\varphi(\tau,\omega_{n^{k}-1}^{k}\cdot T_{n^{k}-1}^{k}%
,\varphi(T_{n^{k}-1}^{k},\omega_{n^{k}-1}^{k},x_{n^{k}-1}^{k},u_{n^{k}-1}%
^{k}),u_{n^{k}-1}^{k}\left(  T_{n^{k}-1}^{k}+\cdot\right)  ),\\
&  \left.  \qquad\qquad\qquad\varphi(\tau,\overline{\omega}%
,y,v)\bigr)<\varepsilon\right.  .
\end{align*}
Thus we may replace the final segment of the controlled $\left(
\varepsilon^{k},T^{k}\right)  $-chain $\zeta^{k}$ by%
\[
(\omega_{n^{k}-1}^{k}\cdot t,\varphi(t,x_{n^{k}-1}^{k},\omega_{n^{k}-1}%
^{k},u_{n^{k}-1}^{k})),t\in\lbrack0,\tau+T_{n^{k}-1}^{k}]
\]
and obtain the desired controlled $(\varepsilon,T)$-chain from $(\omega,x)$ to
$(\overline{\omega}\cdot\tau,\varphi(\tau,y,\overline{\omega},v))$.

(ii) Let $(\omega,x)$ be in the maximal chain controllable set $F$. First we
show that the control $v$ constructed above (defined only for negative time)
satisfies, for $\tau<0$%
\begin{equation}
(\omega\cdot\tau,\varphi(\tau,\omega,x,v))\in F. \label{F1}%
\end{equation}
By definition of $F$ it holds that $(\omega,x)\in\mathbf{R}^{c}(\overline
{\omega},y)$ for every $(\overline{\omega},y)\in F$. As shown in part (i) of
the proof, it follows that $\left(  \omega\cdot\tau,\varphi(\tau
,\omega,x,v)\right)  \in\mathbf{R}^{c}(\overline{\omega},y)$. We claim that,
for every $(\overline{\omega},y)\in F$,
\[
(\overline{\omega},y)\in\mathbf{R}^{c}(\omega\cdot\tau,\varphi(\tau
,\omega,x,v)).
\]
Since $F$ is a maximal chain controllable set, the claim implies (\ref{F1})
for $\tau<0$.

In order to prove the claim, consider a controlled $(\varepsilon,T)$-chain
from $(\omega,x)$ to $(\overline{\omega},y)$. Modify the first segment
$(\omega\cdot t,\varphi(t,\omega,x,u_{0})),t\in\lbrack0,T_{0}]$, in the
following way: define a control%
\[
u_{0}^{\prime}(t)=\left\{
\begin{array}
[c]{lll}%
v(t+\tau) & \text{for} & t\in\lbrack0,-\tau]\\
u_{0}(t-\tau) & \text{for} & t\in(-\tau,-\tau+T_{0}]
\end{array}
\right.
\]
and let the modified segment be%
\[
\left(  \left(  \omega\cdot\tau\right)  \cdot t,\varphi(t,\omega\cdot
\tau,\varphi(\tau,\omega,x,u),u_{0}^{\prime})\right)  ,t\in\lbrack
0,-\tau+T_{0}].
\]
Together with the other segments this yields a controlled $(\varepsilon
,T)$-chain from $(\omega\cdot\tau,\varphi(\tau,\omega,x,u))$ to $(\overline
{\omega},y)$. \ This completes the proof of the claim.

It remains to consider the case of positive $\tau$. We have to construct a
control $v\in\mathcal{U}$ such that (\ref{F1}) holds for $\tau>0$.

Let $(\overline{\omega},y)\in F$. Then there exists a controlled
$(\varepsilon,T)$-chain from $(\overline{\omega},y)$ to $(\omega,x)$. As in
the proof of (i), for any control $u\in\mathcal{U}$, one can construct
modified controlled $(\varepsilon,T)$-chains from $(\overline{\omega},y)$ to
$(\omega\cdot\tau,\varphi(\tau,\omega,x,u))$. Hence it remains to construct a
control $v\in\mathcal{U}$ such that, for $\tau>0$ and all $\varepsilon,T>0$
there are controlled $(\varepsilon,T)$-chains from $(\omega\cdot\tau
,\varphi(\tau,\omega,x,v))$ to $(\overline{\omega},y)$.

For sequences $\varepsilon^{k}\rightarrow0$ and $T^{k}\rightarrow\infty$,
consider controlled $\left(  \varepsilon^{k},T^{k}\right)  $-chains $\zeta
^{k}$ from $(\omega,x)$ to $(\overline{\omega},y)$. Let the initial segments
of the chains $\zeta^{k}$ be given by $(\omega\cdot t,\varphi(t,\omega
,x,u_{0}^{k})),t\in\lbrack0,T_{0}^{k}]$. We may assume that the controls
$u_{0}^{k}\in\mathcal{U}$ converge to some $v\in\mathcal{U}$ defined on
$[0,\infty)$, and hence%
\[
\varphi(\tau,\omega,x,u_{0}^{k})\rightarrow\varphi(\tau,\omega,x,v)\text{ for
}\tau>0.
\]
Let $\varepsilon,T>0$. For $k$ large enough, one obtains $\varepsilon
^{k}<\varepsilon/2$ and $T_{0}^{k}-\tau\geq T$. In order to construct an
$(\varepsilon,T)$-chain, we may, if necessary, introduce a trivial jump
replacing $T_{0}^{k}$ by a time $\widetilde{T}_{0}^{k}$ with $\widetilde
{T}_{0}^{k}-\tau\in\lbrack T,2T]$; cf. Remark \ref{Remark_2T}. Using
compactness of $\mathcal{U}$ and continuity of $\varphi$, one finds $\delta>0$
such that $d(\varphi(\tau,\omega,x,v),\widetilde{y})<\delta$ implies for all
$u\in\mathcal{U}$
\[
d\left(  \varphi(t,\omega\cdot\tau,\varphi(\tau,\omega,x,v),u),\varphi
(t,\omega\cdot\tau,\widetilde{y},u)\right)  <\varepsilon/2\text{ for all }%
t\in\lbrack T,2T]\text{ and all }u\in\mathcal{U}.
\]
It follows that, for $k$ large enough,%
\begin{align*}
&  \varphi(\widetilde{T}_{0}^{k}-\tau,\omega\cdot\tau,\varphi(\tau
,\omega,x,v),u_{0}^{k}(\tau+\cdot)),x_{1}^{k})\\
&  \leq\varphi(\widetilde{T}_{0}^{k}-\tau,\omega\cdot\tau,\varphi(\tau
,\omega,x,v),u_{0}^{k}(\tau+\cdot)),\varphi(\widetilde{T}_{0}^{k}-\tau
,\omega\cdot\tau,\varphi(\tau,\omega,x,u_{0}^{k}),u_{0}^{k}(\tau+\cdot)))\\
&  \qquad+d(\varphi(\widetilde{T}_{0}^{k},\omega,x,u_{0}^{k}),x_{1}^{k})\\
&  <\varepsilon/2+\varepsilon^{k}<\varepsilon.
\end{align*}
Thus, we can replace in the controlled $(\varepsilon^{k},T^{k})$-chain
$\zeta^{k}$ the initial segment by%
\[
\varphi(t,\omega\cdot\tau,\varphi(\tau,\omega,x,v),u_{0}^{k}(\tau+\cdot
)),t\in\lbrack0,\widetilde{T}_{0}^{k}-\tau],
\]
and obtain a controlled $(\varepsilon,T)$-chain from $(\omega\cdot\tau
,\varphi(\tau,\omega,x,v))$ to $(\overline{\omega},y)\,$. This completes the proof.
\end{proof}

An immediate consequence of Proposition \ref{Proposition_inv}(ii) is the
following result.

\begin{proposition}
\label{Proposition_contained2}Suppose that $F\subset\Omega\times M$ is a chain
controllable set, which is contained in a maximal chain controllable set in
$\Omega\times Q$ with $Q\subset M$ compact. Then $F$ is contained in a chain
control set.
\end{proposition}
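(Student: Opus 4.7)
The plan is to obtain the result as a one-line corollary of Proposition~\ref{Proposition_inv}(ii). By hypothesis there exists a maximal chain controllable set $F^{\ast}\subseteq\Omega\times Q$ with $F\subseteq F^{\ast}$. The final clause of Proposition~\ref{Proposition_inv}(ii) asserts precisely that any such $F^{\ast}$ is itself a chain control set, so the containment $F\subseteq F^{\ast}$ witnesses the conclusion and nothing further is required.

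In more detail, I would proceed in three short steps. First, extract $F^{\ast}$ from the hypothesis, noting that $F^{\ast}\subseteq\Omega\times Q$ is chain controllable with all chain segments in $\Omega\times Q$ and is maximal with this property in $\Omega\times Q$. Second, invoke Proposition~\ref{Proposition_inv}(ii): for every $(\omega,x)\in F^{\ast}$ and every $\tau\in\mathbb{R}$ it produces a control $v\in\mathcal{U}$ with $(\omega\cdot\tau,\varphi(\tau,\omega,x,v))\in F^{\ast}$. Concatenating these controls (or, more cleanly, directly quoting the last line of that proposition) yields condition~(i) of Definition~\ref{Etotal}, while condition~(ii) is immediate from chain controllability of $F^{\ast}$. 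Third, observe that the maximality clause in Definition~\ref{Etotal} for $F^{\ast}$ is built into the statement ``$F^{\ast}$ is a chain control set'' already proved in Proposition~\ref{Proposition_inv}(ii); hence $F\subseteq F^{\ast}$ finishes the argument.

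Since all the substantive work is hidden inside Proposition~\ref{Proposition_inv}(ii) (in particular, the construction of the negative-time control $v$ via diagonal extraction from $(\varepsilon^{k},T^{k})$-chains), there is no genuine obstacle here; one must merely take care not to repeat that construction. The sole subtlety worth flagging is the interplay between two notions of maximality, ``maximal chain controllable in $\Omega\times Q$'' versus ``maximal satisfying~(i) and~(ii)'' in Definition~\ref{Etotal}, but this interplay is exactly what Proposition~\ref{Proposition_inv}(ii) resolves. The preceding sentence in the paper, ``An immediate consequence of Proposition~\ref{Proposition_inv}(ii) is the following result,'' signals this brevity.
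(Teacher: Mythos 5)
Your proposal is correct and is exactly the paper's argument: the paper provides no separate proof of Proposition~\ref{Proposition_contained2} beyond the one-line remark that it is ``an immediate consequence of Proposition~\ref{Proposition_inv}(ii),'' which is precisely the citation you make. You are also right that the invariance construction in Proposition~\ref{Proposition_inv}(ii) is where the real work lives, and that the only point worth pausing over is the matching of the two maximality notions, which that proposition already settles by asserting that the maximal chain controllable set in $\Omega\times Q$ is itself a chain control set.
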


It is of interest to see if the behavior in a single fiber determines chain
control sets. In the periodic case, one can reconstruct chain control sets
from their intersection with a fiber; cf. Gayer \cite{Gaye05}. We will prove a
weaker property for general nonautonomous systems. In the following theorem we
suppose that the jump times $T_{i}$ of the involved controlled $(\varepsilon
,T)$-chains satisfy $T_{i}\in\lbrack T,2T]$; cf. Remark \ref{Remark_2T}.

\begin{theorem}
\label{Theorem_single_fiber}Consider control system (\ref{control}). Fix
$\omega_{0}\in\Omega$ and suppose that there is a nonvoid set $F_{\omega_{0}%
}\subset M$ with the property that for all $x_{0},y_{0}\in F_{\omega_{0}}$ and
all $\varepsilon,T>0$ there exists a controlled $(\varepsilon,T)$-chain from
$(\omega_{0},x_{0})$ to $(\omega_{0},y_{0})$. Then the set
\[
F:=\left\{  (\omega,z)\in\Omega\times M\left\vert
\begin{array}
[c]{c}%
\text{$\exists x_{0},y_{0}\in F_{\omega_{0}}~$}\forall\text{$\varepsilon
,T>0~$}\exists\text{ controlled $(\varepsilon,3T)$-chain}\\
\text{from $(\omega_{0},x_{0})$ to $(\omega_{0},y_{0})~\exists$}%
i\in\text{$\{0,\dots,n-1\}$}\\
\exists\tau\in\lbrack T,2T]:\omega=\omega_{i}\cdot\tau,d(\text{$z,\varphi
(\tau,\omega_{i},x_{i},u_{i}))$}<\varepsilon
\end{array}
\right.  \right\}
\]
is chain controllable. The set $F$ is contained in a chain control set, if $F$
is contained in a maximal chain controllable set in $\Omega\times Q$ with
$Q\subset M$ compact.
\end{theorem}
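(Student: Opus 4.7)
The second assertion follows immediately from Proposition \ref{Proposition_contained2} once $F$ is known to be chain controllable, so I concentrate on chain controllability. Fix $(\omega^1,z^1),(\omega^2,z^2)\in F$ and $\varepsilon,T>0$. By the definition of $F$ there are points $x_0^k,y_0^k \in F_{\omega_0}$ (for $k=1,2$) such that, for any prescribed $\varepsilon'>0$, one obtains controlled $(\varepsilon',3T)$-chains $\zeta^k$ from $(\omega_0,x_0^k)$ to $(\omega_0,y_0^k)$ together with indices $i^k$ and waiting times $\tau^k\in[T,2T]$ witnessing $\omega^k=\omega_{i^k}^k\cdot\tau^k$ and $d(z^k,\varphi(\tau^k,\omega_{i^k}^k,x_{i^k}^k,u_{i^k}^k))<\varepsilon'$; the hypothesis on $F_{\omega_0}$ in addition supplies an $(\varepsilon',T)$-chain from $(\omega_0,y_0^1)$ to $(\omega_0,x_0^2)$. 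My plan is to manufacture, for every $\varepsilon,T>0$, an $(\varepsilon,T)$-chain from $(\omega^1,z^1)$ to $(\omega_0,y_0^1)$ and one from $(\omega_0,x_0^2)$ to $(\omega^2,z^2)$, and then two applications of Lemma \ref{Lemma_concatenation} produce an $(\varepsilon,T)$-chain from $(\omega^1,z^1)$ to $(\omega^2,z^2)$.

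\textbf{Construction of the two end pieces.} Via Remark \ref{Remark_2T} I may assume $T_{i^k}^k\in[3T,6T]$. The backward piece from $(\omega_0,x_0^2)$ to $(\omega^2,z^2)$ is transparent: execute the first $i^2$ segments of $\zeta^2$ verbatim, landing exactly at $(\omega_{i^2}^2,x_{i^2}^2)$, and append one further segment of length $\tau^2\in[T,2T]$ driven by $u_{i^2}^2$. Since $\omega_{i^2}^2\cdot\tau^2=\omega^2$ and $d(\varphi(\tau^2,\omega_{i^2}^2,x_{i^2}^2,u_{i^2}^2),z^2)<\varepsilon'$, this is already an $(\varepsilon',T)$-chain ending at $(\omega^2,z^2)$. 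The forward piece from $(\omega^1,z^1)$ to $(\omega_0,y_0^1)$ begins with a new first segment of length $T_0':=T_{i^1}^1-\tau^1\in[T,5T]$ driven by the shifted control $u_{i^1}^1(\tau^1+\cdot)$, and is then completed by the untouched segments $i^1+1,\ldots,n-1$ of $\zeta^1$. The cocycle identity gives $\omega^1\cdot T_0'=\omega_{i^1+1}^1$, the final base offset is inherited as $d(\omega_n^1,\omega_0)<\varepsilon'$, and all jumps coming from $\zeta^1$ already have size $<\varepsilon'$; only the new first jump asks for an honest estimate.

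\textbf{Continuity estimate and conclusion.} By the cocycle identity that jump is bounded by
\begin{equation*}
d\bigl(\varphi(T_0',\omega^1,z^1,u_{i^1}^1(\tau^1+\cdot)),\varphi(T_0',\omega^1,\varphi(\tau^1,\omega_{i^1}^1,x_{i^1}^1,u_{i^1}^1),u_{i^1}^1(\tau^1+\cdot))\bigr)+\varepsilon'.
\end{equation*}
Because $\Phi$ is assumed global in this section, joint continuity of $\varphi$ at the fixed point $(\omega^1,z^1)$ together with the compactness of $[T,5T]\times\mathcal{U}$ yields a neighborhood $V\ni(\omega^1,z^1)$ on which $\varphi(t,\omega,y,u)$ differs from $\varphi(t,\omega^1,z^1,u)$ by less than $\varepsilon/2$ for all $(t,u)\in[T,5T]\times\mathcal{U}$. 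The main obstacle is the apparent circularity: $\zeta^1$, and hence $u_{i^1}^1$ and $T_0'$, are chosen only \emph{after} $\varepsilon'$. The resolution is that $V$ depends only on $(\omega^1,z^1)$, on $T$, and on the compact control space $\mathcal{U}$, so $V$ is selected first; then $\varepsilon'<\varepsilon/2$ is shrunk until the witness $(\omega^1,\varphi(\tau^1,\omega_{i^1}^1,x_{i^1}^1,u_{i^1}^1))$ lies in $V$, making the displayed quantity smaller than $\varepsilon$. Two applications of Lemma \ref{Lemma_concatenation} now furnish the required $(\varepsilon,T)$-chain, so $F$ is chain controllable; the second assertion then follows from Proposition \ref{Proposition_contained2}.
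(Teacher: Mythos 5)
The proposal is correct and takes essentially the same approach as the paper: both split the witnessing $(\varepsilon',3T)$-chain at the index $i$ (for the source using the remainder of the segment beyond time $\tau$, for the target truncating the segment at time $\tau$), both exploit a uniform-in-$(t,u)$ continuity estimate for $\varphi$ chosen before $\varepsilon'$ to break the apparent circularity, and both close the argument with two applications of Lemma~\ref{Lemma_concatenation} through the fixed intermediate points $y_0^1,x_0^2\in F_{\omega_0}$ (whose independence of $\varepsilon,T$ is precisely what makes the concatenation lemma applicable).
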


\begin{proof}
By Proposition \ref{Proposition_contained2}, it suffices to show that $F$ is
chain controllable. For $\omega\in\Omega$, write $F_{\omega}:=\left\{  z\in
M\left\vert (\omega,z)\in F\right.  \right\}  $. Fix $(\omega,z),(\omega
^{\prime},z^{\prime})\in F$ and $\varepsilon,T>0$. Continuity of $\varphi$ and
compactness of $[0,6T]\times\Omega\times\mathcal{U}$ implies that there is
$\delta\in(0,\varepsilon)$ such that $d(y,z)<\delta$ implies for all
$t\in\lbrack0,6T],\omega\in\Omega$, and $u\in\mathcal{U}$%
\begin{equation}
d\left(  \varphi(t,\omega,y,u),\varphi(t,\omega,z,u)\right)  <\varepsilon/2.
\label{cont_z}%
\end{equation}
There exist $x_{0},y_{0}\in F_{\omega_{0}}$ and a controlled $(\delta
,3T)$-chain from $(\omega_{0},x_{0})$ to $(\omega_{0},y_{0})$ with%
\[
\omega=\omega_{i}\cdot\tau\text{ and }d\left(  z,\varphi(\tau,\omega_{i}%
,x_{i},u_{i})\right)  <\delta
\]
for some $i$ and $\tau\in\lbrack T,2T]$. Then $\tau+T\leq3T\leq T_{i}\leq6T$,
hence $T_{i}-\tau\in\lbrack T,5T]$, and, by (\ref{cont_z}),%
\begin{align*}
&  d\left(  \varphi(T_{i}-\tau,\omega_{i}\cdot\tau,z,u_{i}(\tau+\cdot
)),\varphi(T_{i},\omega_{i},x_{i},u_{i})\right) \\
&  =d\left(  \varphi(T_{i}-\tau,\omega_{i}\cdot\tau,z,u_{i}(\tau
+\cdot)),\varphi(T_{i}-\tau,\omega_{i}\cdot\tau,\varphi(\tau,\omega_{i}%
,x_{i},u_{i}),u_{i}(\tau+\cdot)\right)  <\varepsilon.
\end{align*}
It follows that the second part of this chain defines a controlled
$(\varepsilon,T)$-chain $\zeta^{1}$ from $(\omega,z)$ to $(\omega_{0},y_{0})$.
It remains to construct a controlled $(\varepsilon,T)$-chain $\zeta^{2}$ from
$(\omega_{0},y_{0})$ to $(\omega^{\prime},z^{\prime})$. Since $(\omega
^{\prime},z^{\prime})\in F$, there exist $x_{0}^{\prime},y_{0}^{\prime}\in
F_{\omega_{0}}$ and a controlled $(\varepsilon,3T)$-chain from $(\omega
_{0},x_{0}^{\prime})$ to $(\omega_{0},y_{0}^{\prime})$ with
\begin{equation}
\text{ }\omega^{\prime}=\omega_{j}^{\prime}\cdot\tau^{\prime}\text{ and
}d\left(  z^{\prime},\varphi(\tau^{\prime},\omega_{j}^{\prime},x_{j}^{\prime
},u_{j}^{\prime})\right)  <\varepsilon\label{end}%
\end{equation}
for some $j$ and $\tau^{\prime}\in\lbrack T,2T]$. We modify the first part of
this chain so that it becomes an $(\varepsilon,T)$-chain from $(\omega
_{0},x_{0}^{\prime})$ to $(\omega^{\prime},z^{\prime})$:\ Instead of the
segment $\varphi(t,\omega_{j}^{\prime},x_{j}^{\prime},u_{j}^{\prime}%
),t\in\lbrack0,T_{j}^{\prime}]$, consider the segment $\varphi(t,\omega
_{j}^{\prime},x_{j}^{\prime},u_{j}^{\prime}),t\in\lbrack0,\tau^{\prime}]$. By
(\ref{end}), this defines a controlled $(\varepsilon,T)$-chain $\zeta^{3}$
from $(\omega_{0},x_{0}^{\prime})$ to $(\omega^{\prime},z^{\prime})$.

Finally, there exists a controlled $(\varepsilon,T)$-chain $\zeta^{2}$ from
$(\omega_{0},y_{0})$ to $(\omega_{0},x_{0}^{\prime})$. We have constructed
$(\varepsilon,T)$-chains $\zeta^{1}$ from $(\omega,z)$ to $(\omega_{0},y_{0}%
)$, $\zeta^{2}$ from $(\omega_{0},y_{0})$ to $(\omega_{0},x_{0}^{\prime})$ and
$\zeta^{3}$ from $(\omega_{0},x_{0}^{\prime})$ to $(\omega^{\prime},z^{\prime
})$. Since $\varepsilon,T>0$ are arbitrary and $x_{0}^{\prime}$ is independent
of $\varepsilon,T$, it follows from Lemma \ref{Lemma_concatenation}, applied
twice, that for all $\varepsilon,T>0$ there are controlled $(\varepsilon
,T)$-chains from $(\omega,z)$ to $(\omega^{\prime},z^{\prime})$ proving the claim.
\end{proof}

\begin{remark}
Theorem~\ref{Theorem_single_fiber} shows that one can find chain control sets
by looking at a single fiber, i.\thinspace e.,\ a single excitation. This
significantly simplifies numerical computations, since only one excitation
$\omega\cdot t,t\in\mathbb{R}$, has to be considered (cf. Colonius and
Wichtrey \cite[Section 7]{ColW09}). Theorem~\ref{Theorem_single_fiber} and its
proof generalize and correct \cite[Proposition 3.6]{ColW09}, where almost
periodic excitations $\omega$ were considered.
\end{remark}

\section{Relation to chain transitive sets\label{transitivity}}

In this section, we relate chain control sets to dynamical objects of the skew
product flow $\Phi$, which is a nonautonomous control flow. In the autonomous
case, chain control sets are the projections of maximal chain transitive sets
for the control flow. In the nonautonomous setting here, no jumps in $\Omega$
are allowed. Hence we also have to define the dynamical objects for the
control flow accordingly.

\begin{definition}
Let $(u,\omega,x),(v,\overline{\omega},y)\in\mathcal{U}\times\Omega\times M$
and fix $\varepsilon,T>0$. For the (nonautonomous) control flow $\Phi$ on
$\mathcal{U}\times\Omega\times M$ an $(\varepsilon,T)$-chain $\zeta$ from
$(u,\omega,x)$ to $(v,\overline{\omega},y)$ is given by $n\in\mathbb{N}$,
elements $(u_{0},\omega_{0},x_{0})=(u,\omega,x),(u_{1},\omega_{1},x_{1}%
),\dots,\allowbreak(u_{n},\omega_{n},x_{n})=(v,\omega_{n},y)\in
\mathcal{U\times}\Omega\times M$, and times $T_{0},\dots,\allowbreak
T_{n-1}\geq T$ such that

(i) $\omega_{j}\cdot T_{j}=\omega_{j+1}$ for $j=0,\dots,n-1$, and
$d(\omega_{n},\overline{\omega})<\varepsilon$,

(ii) $d\bigl((\theta_{T_{j}}u_{j},\varphi(T_{j},\omega_{j},x_{j}%
,u_{j})),(u_{j+1},x_{j+1})\bigr)<\varepsilon$ for $j=0,\dots,n-1$.
\end{definition}

\begin{definition}
A chain transitive set $\mathcal{E}$ for the control flow $\Phi$ is a subset
of $\mathcal{U}\times\Omega\times M$ such that for all $(u,\omega
,x),(v,\overline{\omega},y)\in\mathcal{E}$ and all $\varepsilon,T>0$ there is
an $(\varepsilon,T)$-chain $\zeta$ from $(u,\omega,x)$ to $(v,\overline
{\omega},y)$.
\end{definition}

If for all $\varepsilon,T>0$ all segments $\varphi(t,\omega_{j},x_{j}%
,u_{j}),t\in\lbrack0,T_{j}]$, of the $(\varepsilon,T)$-chains are contained in
a subset $Q\subset M$, we say that $\mathcal{E}$ is a chain transitive set in
$\mathcal{U}\times\Omega\times Q$. We emphasize that chain transitivity for
nonautonomous flows $\Phi$, as defined above, does not coincide with chain
transitivity of the flow $\Phi$ on $\mathcal{U}\times\Omega\times M$, since no
jumps in $\Omega$ are allowed.

\begin{remark}
Chen and Duan \cite[Definition 2.4]{ChenDu11} define chain transitivity for
nonautonomous dynamical systems on noncompact spaces using the pullback
concept. Their main result \cite[Theorem 1.1]{ChenDu11} is a decomposition of
the state space into a chain recurrent part and a gradient-like part. Observe
also that $\mathcal{E}$ is a nonautonomous set in the sense of Kloeden and
Rasmussen \cite[Definition 3.2]{KloR11} for $\Phi$ considered as a skew
product flow with base space $\Omega$ as indicated in (\ref{skew3}).
\end{remark}

Note the following property.

\begin{lemma}
\label{Lemma_concatenation2}Consider $(u^{i},\omega^{i},x^{i})\in
\mathcal{U}\times\Omega\times M,i=1,2,3$, and assume that for all
$\varepsilon,T>0$ there are $(\varepsilon,T)$-chains from $(u^{1},\omega
^{1},x^{1})$ to $(u^{2},\omega^{2},x^{2})$ and from $(u^{2},\omega^{2},x^{2})$
to $(u^{3},\omega^{3},x^{3})$. Then, for all $\varepsilon,T>0$, there are
$(\varepsilon,T)$-chains from $(u^{1},\omega^{1},x^{1})$ to $(u^{3},\omega
^{3},x^{3})$.
\end{lemma}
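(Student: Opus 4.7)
The plan is to mimic the proof of Lemma \ref{Lemma_concatenation} almost verbatim, the sole new ingredient being the control coordinate. A straightforward concatenation of the two given chains fails for the same reason as there: the terminal point of any $(\varepsilon,T)$-chain ending at $(u^2,\omega^2,x^2)$ has the form $(u^2,\omega^\star,x^2)$, with the $u$- and $x$-components exact but $\omega^\star$ only $\varepsilon$-close to $\omega^2$. So the second chain has to be restarted from a perturbed $\omega$, and one must show that this perturbation propagates through the finitely many segments of the second chain within the allowed tolerance. Reassuringly, the control coordinate adds no genuine difficulty: at the junction the $u$-components match exactly, and within each segment of the shifted chain the control $u_j^2$ is left unchanged, so the $u$-jumps $d(\theta_{T_j^2}u_j^2,u_{j+1}^2)$ are inherited verbatim from the original second chain.

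Given $\varepsilon,T>0$, I would first fix an $(\varepsilon/2,T)$-chain $\zeta^2$ from $(u^2,\omega^2,x^2)$ to $(u^3,\omega^3,x^3)$, with data $T_j^2\geq T$, points $(u_j^2,\omega_j^2,x_j^2)$, and cumulative times $S_j^2$ satisfying $\omega_j^2=\omega^2\cdot S_j^2$. Continuity of $\sigma$ at each time $S_j^2$, together with continuity of the cocycle $\varphi$ in its $\omega$-argument, lets me choose $\delta\in(0,\varepsilon/2)$ such that, simultaneously for every $j$, $d(\omega',\omega^2)<\delta$ implies both $d(\omega'\cdot S_j^2,\omega^2\cdot S_j^2)<\varepsilon/2$ and $d\bigl(\varphi(T_j^2,\omega'\cdot S_j^2,x_j^2,u_j^2),\varphi(T_j^2,\omega^2\cdot S_j^2,x_j^2,u_j^2)\bigr)<\varepsilon/2$. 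I would then apply the hypothesis a second time to obtain a $(\delta,T)$-chain $\zeta^1$ from $(u^1,\omega^1,x^1)$ to $(u^2,\omega^2,x^2)$ whose terminal point has the form $(u^2,\omega^\star,x^2)$ with $\omega^\star=\omega^1\cdot S_{n_1}^1$ and $d(\omega^\star,\omega^2)<\delta$.

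To finish, I would define a shifted copy of $\zeta^2$ by keeping the same data $(u_j^2,x_j^2,T_j^2)$ but replacing its $\omega$-sequence by $\omega^\star\cdot S_j^2$, and then concatenate $\zeta^1$ with this shifted chain at the common junction $(u^2,\omega^\star,x^2)$. Condition (i) follows from the flow identity $(\omega^\star\cdot S_j^2)\cdot T_j^2=\omega^\star\cdot S_{j+1}^2$ together with the closing estimate $d(\omega^\star\cdot S_{n_2}^2,\omega^3)<\varepsilon/2+\varepsilon/2=\varepsilon$. Condition (ii) follows by a triangle inequality on the product metric of $\mathcal{U}\times M$: the $u$-jumps are inherited from $\zeta^2$ and are therefore bounded by $\varepsilon/2$, while the perturbed $x$-jumps fall below $\varepsilon$ by the choice of $\delta$. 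The one step needing care — the only real obstacle, and the same as in Lemma \ref{Lemma_concatenation} — is making the continuity choice of $\delta$ uniform across all segments of $\zeta^2$; this is automatic because $\zeta^2$ is a \emph{finite} chain, so one simply takes the minimum over finitely many continuity moduli.
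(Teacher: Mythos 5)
Your proposal is correct and takes the same route the paper intends: the paper's proof of this lemma is just the one-line remark that it is analogous to Lemma~\ref{Lemma_concatenation}, and your argument is exactly that analogy, with the correct observation that the $u$-components of the jumps are carried over verbatim so the only new estimate is the $x$-perturbation controlled by the finitely many continuity moduli.
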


\begin{proof}
The proof is analogous to the proof of Lemma \ref{Lemma_concatenation}, and
hence we omit it.
\end{proof}

We also note the following concept.

\begin{definition}
Consider the nonautonomous control flow $\Phi$ on $\mathcal{U}\times
\Omega\times M$. The forward chain limit set for $\Phi$ is
\[
\Omega^{+}(u,\omega,x):=\left\{  (v,\omega^{\prime},y)\in\mathcal{U}%
\times\Omega\times M\left\vert
\begin{array}
[c]{c}%
\forall\varepsilon,T>0\,\exists(\varepsilon,T)\text{-chain}\\
\text{ from }\left(  u,\omega,x\right)  \text{ to }(v,\omega^{\prime},y)
\end{array}
\right.  \right\}  .
\]

\end{definition}

\begin{proposition}
\label{Proposition_flow_inv}A maximal chain transitive set $Y$ of $\Phi$ in
$\mathcal{U}\times\Omega\times Q$, where $Q\subset M$ is compact, is invariant.
\end{proposition}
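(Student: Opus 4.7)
Fix $(u,\omega,x) \in Y$ and $\tau \in \mathbb{R}$, and write $p_\tau := \Phi_\tau(u,\omega,x)$. I want to show $p_\tau \in Y$; by maximality of $Y$ this reduces to verifying that $Y \cup \{p_\tau\}$ is again a chain transitive set in $\mathcal{U} \times \Omega \times Q$. Since chains between points of $Y$ already exist and Lemma~\ref{Lemma_concatenation2} lets us glue chains through $p_\tau$ (which in particular covers chains from $p_\tau$ to itself), it suffices to produce, for every $(v,\omega',y), (v'',\omega'',y'') \in Y$ and every $\varepsilon, T > 0$: (A) an $(\varepsilon,T)$-chain from $(v,\omega',y)$ to $p_\tau$, and (B) one from $p_\tau$ to $(v'',\omega'',y'')$.

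For (A) with $\tau > 0$, I pick $\delta \in (0,\varepsilon)$ so small that $\Phi_\tau$ sends the $\delta$-ball around $(u,\omega,x)$ into the $\varepsilon$-ball around $p_\tau$ (continuity of the control flow). Take any $(\delta, T)$-chain from $(v,\omega',y)$ to $(u,\omega,x)$ with data $T_j, u_j$, and prolong its final segment by time $\tau$, keeping the control $u_{n-1}$ over the whole interval $[0, T_{n-1}+\tau]$. The new endpoint is $\Phi_\tau$ applied to the original pre-jump state, which was $\delta$-close to $(u,\omega,x)$; consequently it lies $\varepsilon$-close to $p_\tau$ in all three components, producing the required chain, and the extra $\omega$-condition is delivered by continuity of $\sigma_\tau$.

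For (B) with $\tau > 0$ the decisive observation is that in the definition of an $(\varepsilon,T)$-chain for $\Phi$ the first control $u_0$ is rigidly identified with the initial $\mathcal{U}$-component; hence the first segment of any chain based at $(u,\omega,x)$ traces exactly the orbit piece $\{\Phi_t(u,\omega,x) : t \in [0,T_0]\}$. Fix an $(\varepsilon, T+\tau)$-chain from $(u,\omega,x)$ to $(v'',\omega'',y'')$, so that $T_0 \geq T + \tau$. Construct the new chain by starting at $p_\tau$, using the (now forced) initial control $\theta_\tau u$ for the shortened time $T_0 - \tau \geq T$. By the cocycle identity $\Phi_{T_0-\tau}(p_\tau) = \Phi_{T_0}(u,\omega,x)$, the new first segment reaches exactly the endpoint of the original first segment, so the original jump to $(u_1,\omega_1,x_1)$ and all subsequent segments and jumps can be reused verbatim.

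The case $\tau < 0$ is entirely symmetric: in (A) one shortens the last segment by $|\tau|$, taking the source chain with segment length $\geq T + |\tau|$ and invoking continuity of $\Phi_\tau$ on the endpoint; in (B) one prepends a segment of length $|\tau|$ with the forced initial control $\theta_\tau u$, so the new first segment is again a time-shift of $\Phi_t(u,\omega,x)$ and meets the original jump point exactly. The only genuine conceptual obstacle is (B): naively one would expect trouble because the initial control of any chain from $p_\tau$ is rigidly $\theta_\tau u$ rather than $u$. The resolution is precisely the observation above — chains from $(u,\omega,x)$ are forced to ride the flow orbit $\Phi_t(u,\omega,x)$, which is intrinsic to $\Phi$ and can therefore be reparametrized to start at $p_\tau$ with no approximation loss. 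Compactness of $Q$ combined with continuity of $\Phi$ keeps the prolonged and shortened segments inside $Q$, and maximality of $Y$ then yields $\Phi_\tau(Y) \subseteq Y$ for every $\tau \in \mathbb{R}$, i.e., invariance.
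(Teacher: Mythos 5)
Your proposal is correct, and for the harder direction it takes a genuinely different and cleaner route than the paper. Part (A) of your plan (chaining from an arbitrary point of $Y$ into $p_\tau$ by extending the final segment of a $(\delta,T+|\tau|)$-chain into $(u,\omega,x)$ and invoking continuity of $\Phi_\tau$ at the single point $(u,\omega,x)$) is essentially the paper's step (i). For part (B) the paper proceeds differently: it first chains from $(u,\omega,x)$ to $\Phi_{-\tau}(v,\overline\omega,y)$ and then pushes the \emph{entire} chain forward by $\Phi_\tau$, which changes every jump point and hence requires the uniform continuity estimate \eqref{uniform4}; this is precisely where the compactness of $Q$ enters. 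Your approach instead exploits the observation that the first control of an $(\varepsilon,T)$-chain for $\Phi$ is rigidly identified with the initial $\mathcal{U}$-component, so the first segment of a chain from $(u,\omega,x)$ is an exact orbit piece of $\Phi$. The cocycle identity $\Phi_{T_0-\tau}(p_\tau)=\Phi_{T_0}(u,\omega,x)$ then lets you replace that first segment by its reparametrization starting at $p_\tau$ with \emph{zero} loss of accuracy, so the remaining jumps and segments can be reused verbatim. This avoids error amplification entirely, and in particular the uniform-continuity-via-compactness step becomes unnecessary in your argument. Both proofs leave the same small point implicit (whether the extended final segment in part (A), and the point $p_\tau$ itself, remain inside $\mathcal{U}\times\Omega\times Q$), so you are not losing anything the paper's argument has. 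Overall, your version of part (B) is simpler and isolates more precisely which structural feature of the chain definition makes the invariance work.
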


\begin{proof}
The chain transitive set $Y$ is invariant if $(u,\omega,x)\in Y$ implies that
$\Phi_{\tau}(u,\omega,x)\in Y$ for all $\tau\in\mathbb{R}$. Thus we have to
show that for $\tau\in\mathbb{R}$ and $(v,\overline{\omega},y)\in Y$ it
follows that
\[
\Phi_{\tau}(u,\omega,x)\in\Omega^{+}(v,\overline{\omega},y)\text{ and
}(v,\overline{\omega},y)\in\Omega^{+}(\Phi_{\tau}(u,\omega,x)).
\]
(i) First we prove that $\Phi_{\tau}(u,\omega,x)\in\Omega^{+}(v,\overline
{\omega},y)$. Let $\varepsilon,T>0$. By continuity there is $\delta>0$ such
that
\begin{equation}
d((u^{\prime},\omega^{\prime},x^{\prime}),(u,\omega,x))<\delta\text{ implies
}d\left(  \Phi_{\tau}(u^{\prime},\omega^{\prime},x^{\prime}),\Phi_{\tau
}(u,\omega,x)\right)  <\varepsilon. \label{uniform3}%
\end{equation}
Pick a $(\delta,T^{\prime})$-chain from $(v,\overline{\omega},y)$ to
$(u,\omega,x)$ with $T^{\prime}=T+\left\vert \tau\right\vert $, and hence
$T_{n-1}+\tau\geq T$. Then $d(\omega_{n},\omega)<\delta$ and%
\[
d\bigl((\theta_{T_{n-1}}u_{n-1},\varphi(T_{n-1},\omega_{n-1},x_{n-1}%
,u_{n-1})),(u,x)\bigr)<\delta.
\]
Hence $d(\omega_{n}\cdot\tau,\omega\cdot\tau)<\varepsilon$ and
\begin{align*}
&  d\left(  \Phi_{\tau+T_{n-1}}(u_{n-1},\omega_{n-1},x_{n-1}),\Phi_{\tau
}(u,\omega_{n},x)\right) \\
&  =d\left(  \Phi_{\tau}(\Phi_{T_{n-1}}(u_{n-1},\omega_{n-1},x_{n-1}%
)),\Phi_{\tau}(u,\omega,x)\right)  <\varepsilon.
\end{align*}
This yields an $(\varepsilon,T)$-chain from $(v,\overline{\omega},y)$ to
$\Phi_{\tau}(u,\omega,x)$ showing that $\Phi_{\tau}(u,\omega,x)\in\Omega
^{+}(v,\overline{\omega},y)$.

(ii) Let $\tau\in\mathbb{R}$ and $(u,\omega,x)\in Y$. We claim that
\[
(v,\overline{\omega},y)\in\Omega^{+}(\Phi_{\tau}(u,\omega,x)).
\]
By (i) it follows, for all $(v,\overline{\omega},y)\in Y$ that $\Phi_{\tau
}(u,\omega,x)\in\Omega^{+}(v,\overline{\omega},y)$. Furthermore, it follows
from $(v,\overline{\omega},y)\in\Omega^{+}(u,\omega,x)$ that $\Phi_{-\tau
}(v,\overline{\omega},y)\in\Omega^{+}(u,\omega,x)$. Here we use the
compactness assumption for $Q$: For $\varepsilon>0$ there is $\delta>0$ such
that $d(y^{\prime},y^{\prime\prime})<\delta$ in $Q$ and $d(\omega^{\prime
},\omega^{\prime\prime})<\delta$ implies, for all $u\in\mathcal{U}$,%
\begin{equation}
d(\omega^{\prime}\cdot\tau,\omega^{\prime\prime}\cdot\tau)<\varepsilon\text{
and }d\left(  \varphi(\tau,\omega^{\prime},y^{\prime},u),\varphi(\tau
,\omega^{\prime\prime},y^{\prime\prime},u)\right)  <\varepsilon.
\label{uniform4}%
\end{equation}
There is a $(\delta,T)$-chain $\zeta$ in $\mathcal{U}\times\Omega\times Q$
from $(u,\omega,x)$ to $\Phi_{-\tau}(v,\overline{\omega},y)$ given by
$n\in\mathbb{N}$ and $(u_{0},\omega_{0},x_{0})=(u,\omega,x),\dots
,(u_{n},\omega_{n},x_{n})=(\theta_{-\tau}v,\omega_{n},\varphi(-\tau
,\overline{\omega},y,\theta_{-\tau}v)$ in $\mathcal{U\times}\Omega\times Q$,
and times $T_{0},\dots,T_{n-1}\geq T$ such that

(i) $\omega_{j}\cdot T_{j}=\omega_{j+1}$ for $j=0,\dots,n-1$, and
$d(\omega_{n},\overline{\omega}\cdot(-\tau))<\delta$,

(ii) $d\bigl((\theta_{T_{j}}u_{j},\varphi(T_{j},\omega_{j},x_{j}%
,u_{j})),(u_{j+1},x_{j+1})\bigr)<\varepsilon$ for $j=0,\dots,n-1$.

This gives rise to the following $(\varepsilon,T)$-chain from $\Phi_{\tau
}(u,\omega,x)$ to $(v,\overline{\omega},y)$.\ Let the times be given by
$T_{0},\dots,T_{n-1}$ and let $(u_{0}^{\prime},\omega_{0}^{\prime}%
,x_{0}^{\prime})=\Phi_{\tau}(u,\omega,x)$,%
\begin{align*}
(u_{j}^{\prime},\omega_{j}^{\prime},x_{j}^{\prime})  &  =(\theta_{\tau}%
u_{j},\omega_{j}\cdot\tau,\varphi(\tau,\omega_{j},x_{j},u_{j}))\text{ for
}j=0,\ldots,n-1,\\
(u_{n}^{\prime},\omega_{n}^{\prime},x_{n}^{\prime})  &  =(\theta_{\tau}%
u_{n},\omega_{n}\cdot\tau,\varphi(\tau,\omega_{n},x_{n},u_{n}))=(v,\omega
_{n}\cdot\tau,y)\in\mathcal{U\times}\Omega\times M.
\end{align*}
Using (\ref{uniform4}) we verify that

(i) $\omega_{j}^{\prime}\cdot T_{j}=\omega_{j}\cdot(\tau+T_{j})=\omega
_{j+1}\cdot\tau=\omega_{j+1}^{\prime}$ for $j=0,\dots,n-1$, and $d(\omega
_{n}^{\prime},(\overline{\omega}\cdot(-\tau))\cdot\tau))=d(\omega_{n}\cdot
\tau,\overline{\omega})<\varepsilon$,

(ii) $d\bigl((\theta_{T_{j}}\theta_{\tau}u_{j},\varphi(\tau+T_{j},\omega
_{j},x_{j},u_{j})),(\theta_{\tau}u_{j+1},\varphi(\tau,\omega_{j+1}%
,x_{j+1},u_{j+1})\bigr)<\varepsilon$ for $j=0,\dots,n-1$.

It follows that $(v,\overline{\omega},y)\in\Omega^{+}(\Phi_{\tau}%
(u,\omega,x))$, as claimed. This completes the proof of the proposition.
\end{proof}

We cite the following lemma (cf. Colonius and Kliemann \cite[Lemma
4.2.1]{ColK00} or Kawan \cite[Proposition 1.14]{Kawan13}).

\begin{lemma}
\label{Lemma_metric}The set $\mathcal{U}$ is compact and metrizable in the
weak$^{\ast}$ topology of $L^{\infty}(\mathbb{R}\mathbf{,}\mathbb{R}%
^{m})=(L^{1}(\mathbb{R}\mathbf{,}\mathbb{R}^{m}))^{\ast}$; a metric is given
by
\begin{equation}
d(u,v)=\sum_{i=1}^{\infty}\frac{1}{2^{i}}\frac{\mid\int_{\mathbb{R}}\langle
u(t)-v(t),y_{i}(t)\rangle\,dt\mid}{1+\mid\int_{\mathbb{R}}\langle
u(t)-v(t),y_{i}(t)\rangle\,dt\mid}, \label{metric on U}%
\end{equation}
where $\left\{  y_{i},\;i\in\mathbb{N}\right\}  $ is a countable, dense subset
of $L^{1}(\mathbb{R}\mathbf{,}\mathbb{R}^{m})$, and $\langle\cdot$%
,$\cdot\rangle$\thinspace denotes an inner product in\thinspace\thinspace
$\mathbb{R}^{m}.$ With this metric, $\mathcal{U}$ is a compact, complete,
separable metric space.
\end{lemma}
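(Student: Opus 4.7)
The plan is to deduce the lemma from Banach--Alaoglu plus the standard fact that the weak$^{\ast}$ topology on bounded subsets of the dual of a separable Banach space is metrizable. The substantive work then reduces to three points: (a) identifying $\mathcal{U}$ as a norm-bounded, weak$^{\ast}$-closed subset of $L^{\infty}(\mathbb{R},\mathbb{R}^{m})$; (b) checking that the explicit formula \eqref{metric on U} really is a metric; and (c) showing it induces the weak$^{\ast}$ topology on $\mathcal{U}$.

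For (a), since $U\subset\mathbb{R}^{m}$ is compact there is $R>0$ with $U\subset\overline{B}_{R}(0)$, so $\|u\|_{\infty}\leq R$ for every $u\in\mathcal{U}$, i.e., $\mathcal{U}$ lies in the closed $R$-ball of $L^{\infty}$. The space $L^{1}(\mathbb{R},\mathbb{R}^{m})$ is separable, its dual is $L^{\infty}(\mathbb{R},\mathbb{R}^{m})$, and the $R$-ball is weak$^{\ast}$-compact by Banach--Alaoglu. Weak$^{\ast}$-closedness of $\mathcal{U}$ uses convexity of $U$: if $u_{n}\to u$ weak$^{\ast}$ with $u_{n}(t)\in U$ a.e., then for every measurable set $A$ of finite positive measure and every $\eta\in\mathbb{R}^{m}$ the function $\mathbf{1}_{A}\eta\in L^{1}$ is a test function, so the averages $\frac{1}{|A|}\int_{A}u$ lie in the closed convex set $U$; letting $A$ shrink to a Lebesgue point and invoking Lebesgue differentiation yields $u(t)\in U$ a.e. Combining these two facts, $\mathcal{U}$ is weak$^{\ast}$-compact.

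For (b), each integral $\int_{\mathbb{R}}\langle u(t)-v(t),y_{i}(t)\rangle\,dt$ is finite because $u-v\in L^{\infty}$ with $\|u-v\|_{\infty}\leq 2R$ and $y_{i}\in L^{1}$; the $i$-th term of \eqref{metric on U} is bounded by $2^{-i}$, so the series converges absolutely. The map $s\mapsto s/(1+s)$ on $[0,\infty)$ is concave, nondecreasing, and subadditive, which transfers the triangle inequality of $|\int\langle u-v,y_{i}\rangle|$ to each summand and hence to $d$. Symmetry is obvious, and $d(u,v)=0$ forces $\int\langle u-v,y_{i}\rangle\,dt=0$ for every $i$; density of $\{y_{i}\}$ in $L^{1}(\mathbb{R},\mathbb{R}^{m})$ then gives $u=v$ a.e.

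For (c), suppose $u_{n}\to u$ weak$^{\ast}$. Each term in \eqref{metric on U} converges to $0$, and by the bound $2^{-i}$ together with the dominated convergence theorem for series, $d(u_{n},u)\to 0$. Conversely, if $d(u_{n},u)\to 0$ then $\int\langle u_{n}-u,y_{i}\rangle\,dt\to 0$ for each $i$; given any $y\in L^{1}$ and $\varepsilon>0$, pick $y_{i}$ with $\|y-y_{i}\|_{1}<\varepsilon/(4R)$ and estimate
\[
\Bigl|\int\langle u_{n}-u,y\rangle\,dt\Bigr|\leq 2R\|y-y_{i}\|_{1}+\Bigl|\int\langle u_{n}-u,y_{i}\rangle\,dt\Bigr|,
\]
so the left side is less than $\varepsilon$ eventually. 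Thus the metric and weak$^{\ast}$ topologies agree on $\mathcal{U}$, which is consequently a compact metric space; compactness automatically delivers completeness and separability. The only genuinely delicate step is the verification of the triangle inequality for the summands (via concavity of $s\mapsto s/(1+s)$) and the $\varepsilon/(4R)$ approximation argument showing that metric convergence upgrades from the countable dense family to all of $L^{1}$; everything else is bookkeeping on top of Banach--Alaoglu.
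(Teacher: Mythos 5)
The paper does not actually supply a proof of Lemma~\ref{Lemma_metric}: it is presented as a cited fact, with references to Colonius and Kliemann \cite[Lemma~4.2.1]{ColK00} and Kawan \cite[Proposition~1.14]{Kawan13}. Your proposal reconstructs, correctly and completely, the standard argument behind that citation: Banach--Alaoglu on the closed norm-ball of $L^{\infty}=(L^{1})^{\ast}$ containing $\mathcal{U}$; weak$^{\ast}$-closedness of $\mathcal{U}$ via compactness and convexity of $U$ together with Lebesgue differentiation; and the explicit metrization of the weak$^{\ast}$ topology on a bounded set using a countable dense family $\{y_i\}$ in the separable predual $L^{1}$. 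The two nontrivial verifications you flag are carried out correctly: subadditivity and monotonicity of $s\mapsto s/(1+s)$ give the triangle inequality for $d$, and the $\varepsilon/(4R)$ approximation (using the uniform bound $\|u_{n}-u\|_{\infty}\leq 2R$ on $\mathcal{U}$) upgrades convergence against $\{y_{i}\}$ to convergence against all of $L^{1}$, so $d$ induces the weak$^{\ast}$ topology on $\mathcal{U}$. Compactness then yields completeness and separability. This is the same route the cited sources take; your write-out adds the detail the paper omits by outsourcing.
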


The following theorem establishes the equivalence of chain control sets and
maximal invariant chain transitive sets for the control flow.

\begin{theorem}
\label{Theorem_equivalence}Consider the nonautonomous control system given by
(\ref{control}).

(i) If $E\subset\Omega\times M$ is a chain control set, then the lift%
\[
\mathcal{E}:=\{(u,\omega,x)\in\mathcal{U}\times\Omega\times M\left\vert
\forall t\in\mathbb{R}:\psi(t,\omega,x,u)\in E\right.  \}
\]
is a maximal invariant chain transitive set for the control flow $\Phi$.

(ii) Conversely, let $\mathcal{E}\subset\mathcal{U}\times\Omega\times M$ be a
maximal invariant chain transitive set for $\Phi$. Then the projection to
$\Omega\times M$,%
\[
\pi_{\Omega\times M}\mathcal{E}:=\left\{  (\omega,x)\in\Omega\times
M\left\vert \exists u\in\mathcal{U}:(u,\omega,x)\in\mathcal{E}\right.
\right\}
\]
is a chain control set.
\end{theorem}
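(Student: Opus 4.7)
My plan is to prove the two directions separately, leveraging maximality on each side and the elementary observation that $\pi_{\Omega\times M}\mathcal{E}=E$ for the lift defined in part (i). For direction (i), invariance of $\mathcal{E}$ is immediate: if $(u,\omega,x)\in\mathcal{E}$ and $s\in\mathbb{R}$, the cocycle property gives $\psi(t,\omega\cdot s,\varphi(s,\omega,x,u),\theta_s u)=\psi(t+s,\omega,x,u)\in E$ for every $t$, so $\Phi_s(u,\omega,x)\in\mathcal{E}$. The core is chain transitivity. Given $(u,\omega,x),(v,\bar\omega,y)\in\mathcal{E}$ and $\varepsilon,T>0$, I would invoke Definition \ref{Etotal}(ii) to obtain a controlled $(\varepsilon',T)$-chain in $E$ from $(\omega,x)$ to $(\bar\omega,y)$ with data $(\omega_j^E,x_j^E,u_j^E,T_j^E)$ for a suitably small $\varepsilon'<\varepsilon$, and lift it to a chain of $\Phi$ by keeping the same points in $\Omega\times M$ and choosing intermediate controls $U_j:=\theta_{S_j^E}\tilde u$ (with $S_j^E=T_0^E+\cdots+T_{j-1}^E$). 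Here $\tilde u\in\mathcal{U}$ is a single spliced control agreeing with $u$ on $(-\infty,0]$, with $u_j^E(\cdot-S_j^E)$ on $[S_j^E,S_{j+1}^E)$, and with $v(\cdot-S_n^E)$ on $[S_n^E,\infty)$. This forces $\theta_{T_j^E}U_j=U_{j+1}$ exactly for intermediate indices, so only the two endpoint jumps require estimation.

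The delicate step is to reconcile $U_0$ and $U_n$ with the prescribed $u$ and $v$. I exploit the metric (\ref{metric on U}): since the test functions $\{y_i\}$ lie in $L^1(\mathbb{R},\mathbb{R}^m)$, truncating the series controls the tail, and the uniform bound on controls coming from compactness of $U$ controls each individual integral. Choosing $u_0^E:=u|_{[0,T_0^E]}$ and extending $u_{n-1}^E$ so that its shift coincides with $v$ on $[0,\infty)$ makes the endpoint weak$^{\ast}$ distances arbitrarily small; these choices are admissible because membership of $(u,\omega,x)$ and $(v,\bar\omega,y)$ in $\mathcal{E}$ guarantees that following $u$ forward from $(\omega,x)$ and $v$ backward from $(\bar\omega,y)$ keeps the trajectories in $E$. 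For maximality, first note $\pi_{\Omega\times M}\mathcal{E}=E$ (one inclusion is definitional; the reverse uses Definition \ref{Etotal}(i)). Then any invariant chain transitive $\mathcal{E}'\supseteq\mathcal{E}$ projects to a set $\pi\mathcal{E}'\subseteq\Omega\times M$ satisfying properties (i) and (ii) of Definition \ref{Etotal}, forcing $\pi\mathcal{E}'\subseteq E$ by maximality of $E$; invariance of $\mathcal{E}'$ then gives $\mathcal{E}'\subseteq\mathcal{E}$.

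For direction (ii), set $E:=\pi_{\Omega\times M}\mathcal{E}$. Property (i) of Definition \ref{Etotal} is immediate from invariance of $\mathcal{E}$: for $(\omega,x)\in E$ pick $u$ with $(u,\omega,x)\in\mathcal{E}$, and then $\Phi_t(u,\omega,x)\in\mathcal{E}$ projects to $\psi(t,\omega,x,u)\in E$. Property (ii) is obtained by projecting a $\Phi$-chain from $\mathcal{U}\times\Omega\times M$ to $\Omega\times M$: both the $\omega$- and $x$-jump conditions carry over verbatim, while the control-jump condition is simply discarded. For maximality, one first checks that $\mathcal{E}$ coincides with the lift of $E$ in the sense of (i) (the inclusion $\mathcal{E}\subseteq\text{lift}$ is by invariance; the reverse holds because the lift is itself invariant and chain transitive by direction (i), hence contained in $\mathcal{E}$ by maximality). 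Then if $E'\supsetneq E$ were a strictly larger chain control set, direction (i) applied to $E'$ would provide an invariant chain transitive lift $\mathcal{E}'$ with $\pi\mathcal{E}'=E'\supsetneq E=\pi\mathcal{E}$, hence $\mathcal{E}'\supsetneq\mathcal{E}$, contradicting maximality of $\mathcal{E}$. The principal obstacle throughout is the endpoint reconciliation in the splicing of direction (i), the only point at which the weak$^{\ast}$ structure of $\mathcal{U}$ enters nontrivially; all other steps amount to routine invariance and projection arguments.
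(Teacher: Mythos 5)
The overall strategy — splice controls along a controlled chain in $E$ and invoke the metric (\ref{metric on U}), then project/lift for the converse direction — is the same as the paper's, and your treatments of invariance, of direction (ii), and of the two maximality arguments are sound. The gap is in the endpoint reconciliation in direction (i), which is precisely the step you flag as delicate, and your proposed fix does not actually close it.

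The issue is that the estimate coming from (\ref{metric on U}) (choose $N$, then $S$, then $T\geq S$) bounds $d(w_1,w_2)$ by $\varepsilon$ plus $\max_i\int_{-T}^{T}|w_1-w_2|\,|y_i|\,dt$; it therefore requires $w_1$ and $w_2$ to coincide on essentially \emph{all} of $[-T,T]$. With your single spliced control $\tilde u$ and $U_j:=\theta_{S_j^E}\tilde u$, the intermediate jumps vanish, but at $j=0$ you must compare $\theta_{T_0^E}u$ with $U_1=\theta_{T_0^E}\tilde u$. Choosing $u_0^E:=u|_{[0,T_0^E]}$ makes $u$ and $\tilde u$ agree on $(-\infty,T_0^E)$, hence $\theta_{T_0^E}u$ and $\theta_{T_0^E}\tilde u$ agree on $(-\infty,0)$ only; for $t\in[0,T]$ one has $\theta_{T_0^E}\tilde u(t)=u_1^E(t)$, which is unrelated to $u(T_0^E+t)$. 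The integral $\int_0^T$ is then only bounded by $\mathrm{diam}(U)\int_0^T|y_i|$, which is not small. The symmetric problem occurs at $j=n-1$: extending $u_{n-1}^E$ so that its shift matches $v$ on $[0,\infty)$ gives agreement only for $t\geq 0$, while for $t\in[-T,0)$ the shifted control equals $u_{n-1}^E(T_{n-1}^E+\cdot)$, which need not match $v$. Also note that if instead one takes $U_0=\tilde u$ and $U_n=\theta_{S_n^E}\tilde u$ (to keep every jump zero), the chain no longer starts at $(u,\omega,x)$ nor ends at $(v,\cdot,\cdot)$ as the definition requires.

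The paper's proof avoids this by inserting \emph{two} prepended segments (indices $-2,-1$) that trace the $u$-trajectory from $(\omega,x)$ to $\psi(2T,\omega,x,u)$, and one appended segment (index $n$) tracing the $v$-trajectory from $\psi(-T,\overline\omega,y,v)$; the spliced controls $v_j$ are built so that $v_j$ agrees with $\theta_{T_{j-1}}v_{j-1}$ for $t\le 0$, equals the correct chain control on $(0,T_j)$, and anticipates the next control for $t>T_j$. Consequently each pair $\theta_{T_j}v_j$ and $v_{j+1}$ agrees on an interval $(-\infty,T_{j+1}]$ (or $(-T,\infty)$ at the final jump), which genuinely covers $[-T,T]$ and makes the integrands vanish there. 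With only one prepended segment (or one long segment of length $2T$), the window of agreement around the first jump is only one-sided; two short segments are needed. If you incorporate the paper's prepend-two/append-one structure, your shifted-single-control construction goes through; without it, the claim that "only the two endpoint jumps require estimation" is true, but the estimation of those two jumps fails.
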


\begin{proof}
(i) It is clear that the lift $\mathcal{E}$ is invariant. We show that
$\mathcal{E}$ is chain transitive. Let $(u,\omega,x),(v,\overline{\omega
},y)\in\mathcal{E}$ and pick $\varepsilon,\,T>0.$ Recall the definition of the
metric $d$ on $\mathcal{U}$ given in (\ref{metric on U}) and choose
$N\in\mathbb{N}$ large enough such that $\sum_{i=N+1}^{\infty}2^{-i}%
<\frac{\varepsilon}{2}$. For the finitely many $y_{1},\dots,y_{N}\in
L^{1}(\mathbb{R},\mathbb{R}^{m}),$ there exists $S>0$ such that for all $i$%
\[
\int_{\mathbb{R}\setminus\left[  -S,S\right]  }\left\vert y_{i}(\tau
)\right\vert \,\,d\tau<\frac{\varepsilon}{2\mathrm{\,diam}\,U}.
\]
Without loss of generality, we can assume that $T\geq S$. There is $\delta
\in(0,\varepsilon)$ such that%
\[
d(\omega^{\prime},\overline{\omega}\cdot(-T))<\delta\text{ implies }%
d(\omega^{\prime}\cdot T,\overline{\omega})<\varepsilon.
\]
For the chain control set $E$, there exists a controlled $(\delta,T)$-chain
from $\psi(2T,\omega,x,u)\allowbreak\in E$ to $\psi(-T,y,\overline{\omega
},v)\in E$, and hence there are $n\in\mathbb{N}$ and $x_{0},\dots,x_{n}\in
M,\,u_{0},\dots,\allowbreak u_{n-1}\allowbreak\in\mathcal{U},\,T_{0}%
,\dots,T_{n-1}\geq T$ with $\omega_{j}\cdot T_{j}=\omega_{j+1},d(\omega
_{n},\overline{\omega}\cdot(-T))<\delta$, and%
\begin{align*}
(\omega\cdot(2T),x_{0})  &  =\psi(2T,\omega,x,u),(\omega_{n},x_{n}%
)=(\omega_{n},y),\\
d(\varphi(T_{j},\omega_{j},x_{j},u_{j}),x_{j+1})  &  <\delta\text{\ for
}j=0,\dots,n-1.
\end{align*}
Since $d(\omega_{n},\overline{\omega}\cdot(-T))<\delta$ the choice of $\delta$
implies that $d(\omega_{n}\cdot T,\overline{\omega})<\varepsilon$. We now
construct an $(\varepsilon,T)$-chain from $(u,\omega,x)$ to $(v,\overline
{\omega},y)$ in the following way. Define
\[%
\begin{array}
[c]{cccc}%
T_{-2}=T, & x_{-2}=x, & v_{-2}=u, & \\
T_{-1}=T, & x_{-1}=\varphi(T,x,u), & v_{-1}(t)= & \left\{
\begin{array}
[c]{ll}%
u(T_{-2}+t) & \text{for }t\leq T_{-1}\\
u_{0}(t-T_{-1}) & \text{for }t>T_{-1}%
\end{array}
\right.
\end{array}
\]
and let the times $T_{0},\dots,T_{n-1}$ and the points $x_{0},\dots,x_{n}$ be
as given earlier; furthermore, set
\[%
\begin{array}
[c]{ccc}%
T_{n}=T, & x_{n+1}=y, & v_{n+1}=v,
\end{array}
\]
and define, for $j=0,\dots,n-2$, controls by%
\begin{align*}
v_{j}(t)  &  =\left\{
\begin{array}
[c]{lll}%
v_{j-1}(T_{j-1}+t) & \text{for} & t\leq0\\
u_{j}(t) & \text{for} & 0<t<T_{j}\\
u_{j+1}(t-T_{j}) & \text{for} & t>T_{j},
\end{array}
\right. \\
v_{n-1}(t)  &  =\left\{
\begin{array}
[c]{lll}%
v_{n-2}(T_{n-2}+t) & \text{for} & t\leq0\\
u_{n-1}(t) & \text{for} & 0<t\leq T_{n-1}\\
v(t-T_{n-1}-T) & \text{for} & t>T_{n-1},
\end{array}
\right. \\
v_{n}(t)  &  =\left\{
\begin{array}
[c]{ll}%
v_{n-1}(T_{n-1}+t) & \text{for }t\leq0\\
v(t-T) & \text{for }t>0.
\end{array}
\right.
\end{align*}
Since $d(\omega_{n}\cdot T,\overline{\omega})<\varepsilon$ it follows that%
\[
(v_{-2},\omega,x_{-2}),\,(v_{-1},\omega\cdot T,x_{-1}),\dots,(v_{n+1}%
,\omega_{n}\cdot T,x_{n+1})\text{ and }T_{-2},\,T_{-1},\dots,T_{n}\geq T,
\]
constitute an $(\varepsilon,T)$-chain from $(u,\omega,x)$ to $(v,\overline
{\omega},y)$ provided that for $j=-2,\,-1,\dots,n$%
\[
d(v_{j}(T_{j}+\cdot),v_{j+1})<\varepsilon.
\]
By choice of $T\geq S$ and $N$, one has, for all $w_{1},\,w_{2}\in\mathcal{U}%
$,%
\begin{align}
d(w_{1},w_{2})  &  =\sum_{i=1}^{\infty}2^{-i}\frac{\left\vert \int
_{\mathbb{R}}\left\langle w_{1}(t)-w_{2}(t),y_{i}(t)\right\rangle
\,dt\right\vert }{1+\left\vert \int_{\mathbb{R}}\left\langle w_{1}%
(t)-w_{2}(t),y_{i}(t)\right\rangle \,dt\right\vert }\label{use_metric}\\
&  \leq\sum_{i=1}^{N}2^{-i}\left\{  \left\vert \int_{-T}^{T}\left\langle
w_{1}(t)-w_{2}(t),y_{i}(t)\right\rangle \,dt\right\vert \right. \nonumber\\
&  \qquad\qquad+\left.  \left\vert \int_{\mathbb{R}\setminus\left[
-T,T\right]  }\left\langle w_{1}(t)-w_{2}(t),y_{i}(t)\right\rangle
\,dt\right\vert \right\}  +\frac{\varepsilon}{2}\nonumber\\
&  <\max_{i=1,\dots,N}\int_{-T}^{T}\left\vert w_{1}(t)-w_{2}(t)\right\vert
\;\,\left\vert y_{i}(t)\right\vert \,dt+\varepsilon.\nonumber
\end{align}
Hence it suffices to show that for all considered pairs of control functions
the integrands vanish. This is immediate from the definition of $v_{j}%
,\,j=-2,\dots,n+1$.

(ii) Let $\mathcal{E}$ be a maximal invariant chain transitive set in
$\mathcal{U}\times\Omega\times M$. For $(\omega,x)\in\pi_{\Omega\times
M}\mathcal{E}$ there exists $u\in\mathcal{U}$ such that $(\omega\cdot
t,\varphi(t,\omega,x,u))\in\pi_{\Omega\times M}\mathcal{E}$ for all
$t\in\mathbb{R}$. Now let $(\omega,x),(\overline{\omega},y)\in\pi
_{\Omega\times M}\mathcal{E}$ and fix $\varepsilon,T>0$. There are
$u,v\in\mathcal{U}$ with $(u,\omega,x),(v,\overline{\omega},y)\in\mathcal{E}$.
Then, by chain transitivity of $\mathcal{E}$, there exists an $(\varepsilon
,T)$-chain from $(u,\omega,x)$ to $(v,\overline{\omega},y)$. This yields a
controlled $(\varepsilon,T)$-chain from $(\omega,x)$ to $(\overline{\omega
},y)$.

The proof of the theorem is concluded by the observation that $E$ is maximal
if and only if $\mathcal{E}$ is maximal.
\end{proof}

Observe that, under the compactness assumption of Proposition
\ref{Proposition_flow_inv}, the maximal chain transitive sets of the control
flow $\Phi$ are invariant, and hence the lifts of the chain control sets
coincide with the maximal chain transitive sets for $\Phi$.

\section{Control sets\label{NONAUTONOMOUS-CS}}

This section introduces nonautonomous control sets. Nonautonomous equilibria
for the uncontrolled system are contained in control sets, which are related
to topologically mixing sets of the control flow.

\begin{definition}
\label{Definition_control_set}A nonvoid set $D\subset\Omega\times M$ is a
(nonautonomous) control set if it has the following properties:

(i) for all $(\omega,x_{0})\in D$ there is a control $u$ such that%
\[
\psi(t,\omega,x_{0},u)=(\omega\cdot t,\varphi(t,\omega,x_{0},u))\in D\text{
for all }t\geq0;
\]

(ii) for all $(\omega,x_{0})\in D$ the closure of the (extended) reachable set
from $(\omega,x_{0})$,%
\[
\mathbf{R}^{e}(\omega,x_{0}):=\left\{  \psi(t,\omega,x_{0},u)\in\Omega\times
M\left\vert t\geq0\text{ and }u\in\mathcal{U}\right.  \right\}
\]
contains $D$, i.e., $D\subset\mathrm{cl}\mathbf{R}^{e}(\omega,x_{0})$ for all
$(\omega,x_{0})\in D$, and

(iii) $D$ is maximal with these properties.
\end{definition}

Recall that $\tau$ denotes the flow of the uncontrolled system; cf. (\ref{tau}).

\begin{proposition}
\label{Proposition_minimal1}Let $\mathcal{K}\subset\Omega\times M$ be a
minimal $\tau$-invariant set. Then there exists a control set $D$ with
$\mathcal{K}\subset D$.
\end{proposition}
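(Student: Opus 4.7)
The plan is to verify that $\mathcal{K}$ itself already satisfies conditions (i) and (ii) of Definition~\ref{Definition_control_set}, and then invoke a Zorn-type maximality argument to obtain a control set $D$ containing $\mathcal{K}$.

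For condition (i), given $(\omega,x_{0})\in\mathcal{K}$ I would take the control $u\equiv 0$. Then $\psi(t,\omega,x_{0},0)=\tau(t,\omega,x_{0})$ lies in $\mathcal{K}$ for all $t\in\mathbb{R}$ by $\tau$-invariance. For condition (ii), fix $(\omega,x_{0})\in\mathcal{K}$. The forward $\tau$-orbit
\[
\{\psi(t,\omega,x_{0},0) : t\geq 0\}\subset \mathbf{R}^{e}(\omega,x_{0}).
\]
Since the restriction of $\tau$ to the compact set $\mathcal{K}$ is minimal, by the characterization recalled in Section~\ref{sec:preli} (minimal flows have no proper closed positively invariant subsets), the closure of this forward orbit inside $\mathcal{K}$ equals $\mathcal{K}$. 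Hence $\mathcal{K}\subset \mathrm{cl}\,\mathbf{R}^{e}(\omega,x_{0})$.

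To produce a maximal such set, I would apply Zorn's lemma to the family $\mathcal{F}$ of subsets of $\Omega\times M$ that contain $\mathcal{K}$ and satisfy properties (i) and (ii), ordered by inclusion. The family is nonempty because $\mathcal{K}\in\mathcal{F}$. Given a chain $\{D_{\alpha}\}\subset\mathcal{F}$, set $D^{\ast}:=\bigcup_{\alpha}D_{\alpha}$. Property (i) for $D^{\ast}$ is immediate: any $(\omega,x)\in D^{\ast}$ lies in some $D_{\alpha}$, which supplies a control keeping the forward orbit in $D_{\alpha}\subset D^{\ast}$. For property (ii), given $(\omega,x)\in D_{\alpha}$ and $(\omega',x')\in D_{\beta}$, the chain property places both points in the larger of $D_{\alpha},D_{\beta}$; property (ii) for that larger set then yields $(\omega',x')\in \mathrm{cl}\,\mathbf{R}^{e}(\omega,x)$. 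Hence $D^{\ast}\in\mathcal{F}$, and Zorn's lemma produces a maximal element $D\in\mathcal{F}$, which by definition is a control set containing $\mathcal{K}$.

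I do not foresee a significant technical obstacle; the only delicate point is invoking the correct formulation of minimality to conclude that forward orbits (not just two-sided orbits) are dense in $\mathcal{K}$, which is why I would explicitly cite the characterization given after Theorem~\ref{Theorem_minimal} stating that minimality is equivalent to the absence of proper closed \emph{positively} invariant subsets.
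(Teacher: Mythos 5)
Your proof is correct and, for the verification that $\mathcal{K}$ itself satisfies properties (i) and (ii), essentially matches the paper: both use $u\equiv 0$ and forward invariance for (i), and both rely on minimality of $\tau|_{\mathcal{K}}$ for (ii). The paper expresses the argument for (ii) via the $\omega$-limit set of $(\omega_0,x_0)$, which is a compact invariant subset of $\mathcal{K}$ and hence all of $\mathcal{K}$; you instead appeal directly to the equivalence stated in Section~\ref{sec:preli} (just \emph{before} Theorem~\ref{Theorem_minimal}, not after) that minimality means there is no proper closed \emph{positively} invariant subset, so the forward-orbit closure is all of $\mathcal{K}$. These are the same fact in lightly different clothing.

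Where you genuinely diverge is the maximality step. You invoke Zorn's lemma on the family $\mathcal{F}$ of sets containing $\mathcal{K}$ and satisfying (i)--(ii), and the chain-union argument you sketch is clean because (ii) for two points $(\omega,x)\in D_\alpha$, $(\omega',x')\in D_\beta$ in a totally ordered family reduces to (ii) for whichever of $D_\alpha,D_\beta$ is larger; no transitivity of reachability closures is needed. The paper instead forms the union $D$ of \emph{all} sets satisfying (i)--(ii) and containing $\mathcal{K}$, and then must justify that $D$ satisfies (ii): this requires showing that $(\omega^3,x^3)\in\mathrm{cl}\,\mathbf{R}^e(\omega^1,x^1)$ and $(\omega^2,x^2)\in\mathrm{cl}\,\mathbf{R}^e(\omega^3,x^3)$ imply $(\omega^2,x^2)\in\mathrm{cl}\,\mathbf{R}^e(\omega^1,x^1)$, for which continuity of $\psi$ in the initial condition is used. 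The paper's route avoids the axiom of choice and produces the control set as an explicit union (hence is canonical), at the cost of that transitivity lemma; your route sidesteps the transitivity argument but gives only an abstract maximal element and uses Zorn. One small cleanup: you should remark explicitly (as you essentially do implicitly) that a maximal element of $\mathcal{F}$ is automatically maximal among \emph{all} sets satisfying (i)--(ii), since any strictly larger such set would also contain $\mathcal{K}$ and thus belong to $\mathcal{F}$.
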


\begin{proof}
First, we observe that any set $D^{0}$ satisfying properties (i) and (ii) of
control sets is contained in a maximal set with these properties, i.e., a
control set. This follows since the union $D$ of all sets $D^{\prime}$
containing $D^{0}$ and satisfying these properties again satisfies property
(i). For property (ii) let $(\omega^{1},x^{1}),(\omega^{2},x^{2})\in D$. Then
there is $(\omega^{3},x^{3})\in D^{0}$ with $(\omega^{3},x^{3})\in
\mathrm{cl}\mathbf{R}^{e}(\omega^{1},x^{1})$ and $(\omega^{2},x^{2}%
)\in\mathrm{cl}\mathbf{R}^{e}(\omega^{3},x^{3})$. Using continuity of $\psi$
with respect to the initial value one shows that $(\omega^{2},x^{2}%
)\in\mathrm{cl}\mathbf{R}^{e}(\omega^{1},x^{1})$. Certainly $D$ is maximal
with properties (i) and (ii), and hence a control set.

Since the set $\mathcal{K}$ is $\tau$-invariant it satisfies condition (i) by
choosing the control $u=0$. Condition (ii) holds since, for every $(\omega
_{0},x_{0})\in\mathcal{K}$, the limit set%
\[
\left\{  (\omega,x)\in\Omega\times M\left\vert \exists t_{k}\rightarrow
\infty:\psi(t_{k},\omega_{0},x_{0},0)\rightarrow(\omega,x)\}\right.  \right\}
\subset\mathrm{cl}\mathbf{R}^{e}(\omega_{0},x_{0})
\]
is a compact invariant set contained in $\mathcal{K}$ and hence coincides with
$\mathcal{K}$ by minimality. Thus, it follows that $\mathcal{K}\subset
\mathrm{cl}\mathbf{R}^{e}(\omega_{0},x_{0})$ showing that $\mathcal{K}$ is
contained in a control set.
\end{proof}

The system cannot leave a control set and return to it.

\begin{proposition}
\label{Proposition_noreturn}Let $D$ be a control set and assume that there are
$(\omega_{0},x_{0})\in D$, a time $t_{0}>0$, and a control $u_{0}%
\in\mathcal{U}$ such that $\psi(t_{0},\omega_{0},x_{0},u_{0})\in D$. Then it
follows that $\psi(t_{1},\omega_{0},x_{0},u_{0})\in D$ for all $t_{1}%
\in\lbrack0,t_{0}]$.
\end{proposition}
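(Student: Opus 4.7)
The strategy is to invoke the maximality clause in Definition~\ref{Definition_control_set}. I would set
\[
\Gamma:=\{\psi(s,\omega_0,x_0,u_0):s\in[0,t_0]\}\quad\text{and}\quad D':=D\cup\Gamma,
\]
and verify that $D'$ still satisfies properties (i) and (ii). Since $D\subset D'$, maximality of $D$ then forces $D'=D$, which is exactly the claim.

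For property (i) applied to $D'$, take $(\omega,x)\in D'$. If $(\omega,x)\in D$, the control supplied by (i) of $D$ already keeps the trajectory in $D\subset D'$. If instead $(\omega,x)=\psi(s,\omega_0,x_0,u_0)\in\Gamma$, I would concatenate the shifted control $\theta_s u_0$ on $[0,t_0-s]$ --- which, by the cocycle identity, traces the remaining piece of $\Gamma$ and terminates exactly at $\psi(t_0,\omega_0,x_0,u_0)\in D$ --- with the control furnished by (i) of $D$ at that endpoint.

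For property (ii) of $D'$ I first record the elementary observation that whenever $p_2\in\mathbf{R}^e(p_1)$, prepending the reaching control yields $\mathbf{R}^e(p_2)\subset\mathbf{R}^e(p_1)$, and hence $\mathrm{cl}\,\mathbf{R}^e(p_2)\subset\mathrm{cl}\,\mathbf{R}^e(p_1)$. Given $(\omega,x)\in D$, the inclusion $D\subset\mathrm{cl}\,\mathbf{R}^e(\omega,x)$ is part of (ii) of $D$; for each $\psi(s,\omega_0,x_0,u_0)\in\Gamma$ I would take a sequence $\psi(t_k,\omega,x,u_k)\to(\omega_0,x_0)$ (available because $(\omega_0,x_0)\in D$), concatenate $u_0$ after time $t_k$ to obtain $\tilde u_k\in\mathcal{U}$, and pass to the limit, using continuity of the solution map in its initial data to conclude $\psi(t_k+s,\omega,x,\tilde u_k)\to\psi(s,\omega_0,x_0,u_0)$. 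For $(\omega,x)=\psi(s,\omega_0,x_0,u_0)\in\Gamma$, the cocycle gives $\psi(t_0-s,\omega,x,\theta_s u_0)=\psi(t_0,\omega_0,x_0,u_0)\in D$ exactly, so by the elementary observation and the previous case applied at $\psi(t_0,\omega_0,x_0,u_0)\in D$, one obtains $D'\subset\mathrm{cl}\,\mathbf{R}^e(\omega,x)$.

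The only delicate step is the continuity argument above: combining the approximating control $u_k$ on $[0,t_k]$ with $u_0$ on $[t_k,t_k+s]$, and verifying via the cocycle and continuity of $\psi$ in its spatial argument (with the fixed control $u_0$ on $[0,s]$) that the endpoints converge to $\psi(s,\omega_0,x_0,u_0)$. Once this is in hand, everything else is formal manipulation of the control set axioms together with maximality.
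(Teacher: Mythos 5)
Your proposal is correct and follows essentially the same route as the paper: form $D'=D\cup\Gamma$, verify that $D'$ inherits properties (i) and (ii) of Definition~\ref{Definition_control_set}, and invoke maximality to conclude $D'=D$. The only difference is one of detail: you spell out the verification of property~(i) for points of $\Gamma$ (concatenating $\theta_s u_0$ with a control from (i) of $D$) and the sequence/continuity argument for property~(ii), both of which the paper treats more tersely, but the underlying argument is identical.
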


\begin{proof}
Let $t_{1}\in\lbrack0,t_{0}]$. Since $\psi(t_{1},\omega_{0},x_{0},u_{0}%
)\in\mathrm{cl}\mathbf{R}^{e}(\omega_{0},x_{0})$ continuity of $\psi$ implies
that $\psi(t_{1},\omega_{0},x_{0},u_{0})\in\mathrm{cl}\mathbf{R}^{e}%
(\omega,x)$ for all $(\omega,x)\in D$. Since $D\subset\mathrm{cl}%
\mathbf{R}^{e}(\psi(t_{0},\omega_{0},x_{0},u_{0}))$ and%
\[
\psi(t_{0},\omega_{0},x_{0},u_{0})=\psi(t_{0}-t_{1},\psi(t_{1},\omega
_{0},x_{0},u_{0}),u_{0}(t_{1}+\cdot))
\]
it follows that $D\subset\mathrm{cl}\mathbf{R}^{e}\left(  \psi(t_{1}%
,\omega_{0},x_{0},u_{0})\right)  $. This proves property (ii) of control sets.
Property (i) follows by the maximality property of control sets since%
\[
D\cup\left\{  \psi(t,\omega_{0},x_{0},u_{0})\left\vert t\in\lbrack
0,t_{0}]\right.  \right\}
\]
satisfies properties (i) and (ii) of control sets.
\end{proof}

In terms of the fibers $D_{\omega}$ of a control set $D$, the assumption of
Proposition \ref{Proposition_noreturn} may be written as $x_{0}\in
D_{\omega_{0}}$ and $\varphi(t_{0},\omega_{0},x_{0},u_{0})\in D_{\omega
_{0}\cdot t_{0}}$.

Next we concentrate on controllability properties of the component in $M$. For
$(\omega,x)\in\Omega\times M$, define the reachable and controllable sets at
time $T>0$ by%
\begin{align*}
\mathbf{R}_{T}(\omega,x)  &  :=\left\{  \varphi(T,\omega,x,u)\left\vert
u\in\mathcal{U}\right.  \right\}  ,\\
\mathbf{C}_{T}(\omega,x)  &  :=\left\{  y\left\vert \exists u\in
\mathcal{U}:x=\varphi(T,\omega\cdot(-T),y,u)\right.  \right\}  ,
\end{align*}
respectively. We will consider generalized equilibria of the uncontrolled
system with flow $\tau$ given by (\ref{tau}).

\begin{definition}
A map $\alpha:\Omega\rightarrow M$ is a $\tau$-equilibrium if $\alpha
(\omega\cdot t)=\varphi(t,\omega,\alpha(\omega),0)$ for all $t\in\mathbb{R}$
and $\omega\in\Omega$.
\end{definition}

For a $\tau$-equilibrium $\alpha$, the graph $\mathrm{gr}(\alpha)=\left\{
(\omega,\alpha(\omega))\in\Omega\times M\left\vert \omega\in\Omega\right.
\right\}  $ is an invariant set for the flow $\tau$ since%
\[
\tau(t,\omega,\alpha(\omega))=(\omega\cdot t,\varphi(t,\omega,\alpha
(\omega),0))=(\omega\cdot t,\alpha(\omega\cdot t))\text{ for all }%
t\in\mathbb{R}.
\]
When $\alpha$ is continuous, the image $\alpha(\Omega)$ and the graph
$\mathrm{gr}(\alpha)$ are compact. In particular, the graph of $\alpha$ is a
minimal $\tau$-invariant set. In this situation, the graph of $\alpha$ is
called a copy of the base $\Omega$.

For a control set $D$, the interior of a fiber $D_{\omega}$ is%
\[
\mathrm{int}D_{\omega}=\mathrm{int}\left\{  x\in M\left\vert (\omega,x)\in
D\right.  \right\}  .
\]
The next theorem presents a condition which implies that, for a $\tau
$-equilibrium $\alpha$, any point $\alpha(\omega)$ is contained in the
interior of $D_{\omega}$.

\begin{theorem}
\label{Theorem_exact}Let $\alpha$ be a continuous $\tau$-equilibrium. Assume
that there are $\varepsilon,T>0$ such that for every $\omega\in\Omega$
\begin{equation}
\mathbf{B}_{\varepsilon}(\alpha(\omega\cdot T))\subset\mathbf{R}_{T}%
(\omega,\alpha(\omega))\text{ and }\mathbf{B}_{\varepsilon}(\alpha(\omega
\cdot(-T)))\subset\mathbf{C}_{T}(\omega,\alpha(\omega)). \label{CW_4.5}%
\end{equation}
Then there exists a control set $D$ containing the graph $\mathrm{gr}(\alpha)$
and $\alpha(\omega)\in\mathrm{int}D_{\omega}$ for every $\omega\in\Omega$.
\end{theorem}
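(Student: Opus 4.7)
The plan is to invoke Proposition \ref{Proposition_minimal1} to embed $\mathrm{gr}(\alpha)$ in some control set $D$, and then use the local controllability hypothesis (\ref{CW_4.5}) to enlarge $D$ to a set $D^\ast$ containing a fiberwise $\varepsilon$-tube around $\mathrm{gr}(\alpha)$. By maximality of $D$, this forces $D^\ast = D$, placing $\alpha(\omega)$ in $\mathrm{int}\,D_\omega$ for every $\omega$.

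Since $\alpha$ is continuous, $\mathrm{gr}(\alpha)$ is a compact minimal $\tau$-invariant set, so Proposition \ref{Proposition_minimal1} yields a control set $D \supset \mathrm{gr}(\alpha)$. Next, for each $\omega_0 \in \Omega$ and $z \in \mathbf{B}_\varepsilon(\alpha(\omega_0))$, applying (\ref{CW_4.5}) with base point $\omega_0 \cdot T$ gives $u_z \in \mathcal{U}$ with $\varphi(T, \omega_0, z, u_z) = \alpha(\omega_0 \cdot T)$, and applying it with base point $\omega_0 \cdot (-T)$ gives $v_z \in \mathcal{U}$ with $\varphi(T, \omega_0 \cdot (-T), \alpha(\omega_0 \cdot (-T)), v_z) = z$; thus $(\omega_0, z)$ lies on a length-$T$ trajectory beginning in $\mathrm{gr}(\alpha)$ and on another ending in $\mathrm{gr}(\alpha)$. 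Fixing once and for all a choice $u_y$ for each admissible pair $(\omega, y)$, define
\[
D^\ast := D \cup \{\psi(t, \omega, y, u_y) \mid \omega \in \Omega,\ y \in \mathbf{B}_\varepsilon(\alpha(\omega)),\ t \in [0, T]\}.
\]

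I then verify that $D^\ast$ satisfies properties (i) and (ii) of Definition \ref{Definition_control_set}. For (i): a point $(\omega \cdot s, \varphi(s, \omega, y, u_y)) \in D^\ast \setminus D$ is driven by the shifted control $u_y(s + \cdot)$ over $[0, T-s]$, its trajectory staying in $D^\ast$ by construction (via the cocycle property), to the graph point $(\omega \cdot T, \alpha(\omega \cdot T)) \in D$, whereupon one concatenates the control supplied by (i) for $D$. For (ii): the same detour combined with (ii) for $D$ and the cocycle property yields $D \subset \mathrm{cl}\,\mathbf{R}^e(\omega_1, x_1)$ for every $(\omega_1, x_1) \in D^\ast$; a target point $(\omega_2, x_2) = (\omega' \cdot s', \varphi(s', \omega', y', u_{y'})) \in D^\ast \setminus D$ is then written as the time-$(T+s')$ image of $(\omega' \cdot (-T), \alpha(\omega' \cdot (-T))) \in \mathrm{gr}(\alpha)$ under the concatenation of $v_{y'}$ and $u_{y'}$, so continuous dependence of $\varphi$ on the initial condition transports approximate reachability from that graph point to $(\omega_2, x_2)$. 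Since $D \subset D^\ast$ and $D$ is maximal with properties (i) and (ii), we conclude $D^\ast = D$; the $t = 0$ slice then gives $\mathbf{B}_\varepsilon(\alpha(\omega)) \subset D_\omega$, proving $\alpha(\omega) \in \mathrm{int}\,D_\omega$ for every $\omega$.

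I expect the main obstacle to be property (ii) for $D^\ast$: the new points are not a priori mutually approximately reachable, and one must transport approximate reachability through $\mathrm{gr}(\alpha) \subset D$ by concatenating finitely many controls and invoking continuous dependence of $\varphi$ on the initial condition over the fixed time $T+s' \le 2T$. The fact that the selected $u_y$-trajectory lands in $\mathrm{gr}(\alpha) \subset D$ after the fixed time $T$, and that each $(\omega,y)$ with $y\in\mathbf{B}_\varepsilon(\alpha(\omega))$ is itself reached at time $T$ from $\mathrm{gr}(\alpha)$ via $v_y$, is what makes both verifications clean.
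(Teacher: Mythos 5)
Your proof is correct and follows essentially the paper's route: invoke Proposition \ref{Proposition_minimal1} to get a control set $D\supset\mathrm{gr}(\alpha)$, then use the exact local controllability \eqref{CW_4.5} to show that a fiberwise $\varepsilon$-tube around $\mathrm{gr}(\alpha)$ must already lie in $D$ by maximality. The one genuinely different organizational choice you make is worth flagging. For property (ii), the paper's Step 1 rebuilds approximate reachability between tube points from scratch (steer exactly onto $\mathrm{gr}(\alpha)$ with \eqref{CW_4.5}, drift with $u\equiv 0$ using minimality of $\tau|_{\mathrm{gr}(\alpha)}$, then steer off); you instead route the middle leg through the already-established property (ii) of $D$ and continuity of $\varphi$, which is more modular but rests on the same minimality mechanism that sits inside Proposition \ref{Proposition_minimal1}. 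The more substantive improvement is in property (i): by including the connecting trajectory arcs $\{\psi(t,\omega,y,u_y):t\in[0,T]\}$ in $D^\ast$ rather than only the tube itself, you make (i) immediate (follow the arc into $\mathrm{gr}(\alpha)\subset D$ and then concatenate with the control supplied by (i) of $D$). The paper's Step 2 instead appeals to Proposition \ref{Proposition_noreturn} to conclude that the intermediate arc lies in $D$; but that proposition requires \emph{both} endpoints to already be in $D$, which for tube points is precisely the inclusion being proved, so the paper's argument as written is circular at that spot. Your construction of $D^\ast$ sidesteps the issue cleanly, so the verification of (i) is rigorous without any appeal to Proposition \ref{Proposition_noreturn}.
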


\begin{proof}
By Proposition \ref{Proposition_minimal1}, there exists a control set $D$
containing the minimal invariant set $\mathrm{gr}(\alpha)$. We will prove that
$\left\{  \omega\right\}  \times\mathbf{B}_{\varepsilon}(\alpha(\omega
))\subset D$ for all $\omega\in\Omega$ showing that $\alpha(\omega
)\in\mathrm{int}D_{\omega}$. For this purpose it suffices to show that
$\bigcup\nolimits_{\omega\in\Omega}\left\{  \omega\right\}  \times
\mathbf{B}_{\varepsilon}(\alpha(\omega))$ satisfies properties (i) and (ii) of
control sets.

Step 1. Let $(\omega_{1},\alpha(\omega_{1})),(\omega_{2},\alpha(\omega
_{2}))\in\mathrm{gr}(\alpha)$. We prove that, for%
\[
y_{1}\in\mathbf{B}_{\varepsilon}(\alpha(\omega_{1})),y_{2}\in\mathbf{B}%
_{\varepsilon}(\alpha(\omega_{2})),
\]
there are $T_{n}\geq0$ and $u_{n}\in\mathcal{U}$ with $\psi(T_{n},\omega
_{2},y_{2},u_{n})\rightarrow(\omega_{1},y_{1})$. This will imply that property
(ii) of control sets holds.

The second part of condition (\ref{CW_4.5}) for $\omega_{1}\cdot T$ implies%
\[
\mathbf{B}_{\varepsilon}(\alpha(\omega_{1}))\subset\mathbf{C}_{T}(\omega
_{1}\cdot T,\alpha(\omega_{1}\cdot T)).
\]
Since $y_{1}\in\mathbf{B}_{\varepsilon}(\alpha(\omega_{1}))$ there exists
$v_{1}\in\mathcal{U}$ with%
\begin{equation}
(\omega_{1}\cdot T,\alpha(\omega_{1}\cdot T))=\psi(T,\omega_{1},y_{1}%
,v_{1})=(\omega_{1}\cdot T,\varphi(T,\omega_{1},y_{1},v_{1})). \label{step1a}%
\end{equation}
Similarly, the first part of condition (\ref{CW_4.5}) for $\omega_{2}%
\cdot(-T)$ implies%
\[
\mathbf{B}_{\varepsilon}(\alpha(\omega_{2}))\subset\mathbf{R}_{T}(\omega
_{2}\cdot(-T),\alpha(\omega_{2}\cdot(-T))),
\]
and hence for $y_{2}\in\mathbf{B}_{\varepsilon}(\alpha(\omega_{2}))$ there
exists a control $v_{2}\in\mathcal{U}$ with%
\[
(\omega_{2},y_{2})=\psi(T,\omega_{2}\cdot(-T),\alpha(\omega_{2}(-T),v_{2}).
\]
Since $\mathrm{gr}(\alpha)$ is a minimal $\tau$-invariant set, there are
$S_{n}\rightarrow\infty$ with%
\begin{align}
\psi(S_{n},\omega_{1}\cdot T,\alpha(\omega_{1}\cdot T),0)  &  =\tau
(S_{n},\omega_{1}\cdot T,\alpha(\omega_{1}\cdot T))\nonumber\\
&  =(\omega_{1}\cdot(S_{n}+T),\varphi(S_{n}+T,\omega_{1},\alpha(\omega
_{1}),0)\label{step1bb}\\
&  \rightarrow(\omega_{2}\cdot(-T),\alpha(\omega_{2}\cdot(-T))=\psi
(-T,\omega_{2},\alpha(\omega_{2}),0).\nonumber
\end{align}
By continuity of $\psi$, this implies%
\begin{align*}
&  \psi(T,\psi(S_{n},\omega_{1}\cdot T,\alpha(\omega_{1}\cdot T),0),v_{2})\\
&  \rightarrow\psi(T,\psi(-T,\omega_{2},\alpha(\omega_{2}),0),v_{2}%
)=\psi(T,\omega_{2}\cdot(-T),\alpha(\omega_{2}\cdot(-T)),v_{2})=(\omega
_{2},y_{2}).
\end{align*}
Define the concatenated controls%
\[
u_{n}(t)=\left\{
\begin{array}
[c]{lll}%
v_{1}(t) & \text{for} & t\in\lbrack0,T]\\
0 & \text{for} & t\in(T,S_{n}+T]\\
v_{2}(t-S_{n}-T) & \text{for} & t\in(S_{n}+T,S_{n}+2T]
\end{array}
\right.  .
\]
Then, with $T_{n}:=S_{n}+2T$, it follows that%
\[
\psi(T_{n},\omega_{1},y_{1},u_{n})=\psi(T,\psi(S_{n},\psi(T,\omega_{1}%
,y_{1},v_{1}),0),v_{2})\rightarrow(\omega_{2},y_{2}).
\]
This shows that all $(\omega,y)\in\Omega\times\mathbf{B}_{\varepsilon}%
(\alpha(\omega))$ satisfy property (ii) of control sets.

Step 2. Concerning property (i) of control sets, let $(\omega,y)\in
\Omega\times\mathbf{B}_{\varepsilon}(\alpha(\omega))$. As shown above, there
are $S_{1}^{\prime}:=T_{1}\geq2T$ and $u_{1}:=v_{1}\in\mathcal{U}$ with
$\psi(S_{1}^{\prime},\omega,y,u_{1})=\psi(T_{1},\omega,y,v_{1})\in\Omega
\times\mathbf{B}_{\varepsilon}(\alpha(\omega\cdot S_{1}))$. By Proposition
\ref{Proposition_noreturn}, it follows that all points $\psi(t,\omega
,y,u_{1}),t\in\lbrack0,S_{1}^{\prime}],$ are in $D$. Repeating this argument
one finds a time $S_{2}^{\prime}\geq2T$ and a control $u_{2}\in\mathcal{U}$
such that%
\[
\psi(S_{2}^{\prime},\psi(S_{1}^{\prime},\omega,y,u_{1}),u_{2})\in\Omega
\times\mathbf{B}_{\varepsilon}(\alpha(\omega\cdot(S_{1}^{\prime}+S_{2}%
^{\prime}))).
\]
Proceeding in this way, one constructs a control keeping the system in $D$ for
all $t\geq0$.

Steps 1 and 2 show the assertion of the theorem.
\end{proof}

Finally, we relate control sets around nonautonomous equilibria to
topologically mixing sets of the control flow. Recall that a flow $(X,\phi)$
on a metric space $X$ is topologically mixing if for any two open sets
$\emptyset\not =V_{1},V_{2}\subset X$ there is $S>0$ with $\phi(-S,V_{1})\cap
V_{2}\not =\emptyset$. In the autonomous case, the lifts of control sets with
nonvoid interior to $\mathcal{U}\times M$ are the maximal topologically mixing
sets of the control flow; cf. Colonius and Kliemann \cite[Theorem
4.3.8]{ColK00}. In the following theorem, we assume a strengthened version of
condition (\ref{CW_4.5}).

\begin{theorem}
\label{Theorem_equivalence2}Let $\alpha$ be a continuous $\tau$-equilibrium.
Assume that there are $\varepsilon\geq\varepsilon_{0}>0$ and $T>0$ such that
for every $\omega\in\Omega$ one has $\mathbf{B}_{\varepsilon}(\alpha
(\omega\cdot T))\subset\mathbf{R}_{T}(\omega,\alpha(\omega))$ and
\begin{equation}
d((\omega^{\prime},y^{\prime}),(\omega,\alpha(\omega))<\varepsilon_{0}\text{
implies }\mathbf{B}_{\varepsilon}(\alpha(\omega\cdot(-T)))\subset
\mathbf{C}_{T}(\omega^{\prime},y^{\prime}). \label{5.21}%
\end{equation}

(i) Then, for all $\omega_{1},\omega_{2}\in\Omega$, all $y_{1}\in
\mathbf{B}_{\varepsilon}(\alpha(\omega_{1}))$, and all $y_{2}\in
B_{\varepsilon_{0}}(\alpha(\omega_{2}))$, there are $T_{n}\rightarrow\infty$
and $u_{n}\in\mathcal{U}$ such that $\varphi(T_{n},\omega_{1},y_{1}%
,u_{n})=y_{2}$ for all $n\in\mathbb{N}$ and $\omega_{1}\cdot T_{n}%
\rightarrow\omega_{2}$ for $n\rightarrow\infty$.

(ii) If in the assumption above $T$ can be chosen large enough, it follows for
the control set $D$ containing the graph $\mathrm{gr}(\alpha)$ that the set%
\[
\mathcal{D}^{\prime}:=\left\{  (u,\omega,x)\in\mathcal{U}\times D\left\vert
d(\varphi(t,\omega,x,u),\alpha(\omega\cdot t))<\varepsilon_{0}\text{ for all
}t\in\mathbb{R}\right.  \right\}
\]
is a topologically mixing set for the control flow $\Phi$.
\end{theorem}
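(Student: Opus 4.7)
For part (i), I would adapt Step~1 of the proof of Theorem~\ref{Theorem_exact}, using the strengthened condition (\ref{5.21}) in place of the second part of (\ref{CW_4.5}) to force an \emph{exact} hit on $y_2$. The construction is a three-piece control concatenation. First, apply (\ref{5.21}) with $\omega=\omega'=\omega_1\cdot T$ and $y'=\alpha(\omega_1\cdot T)$, for which the distance is zero, to obtain $v_1\in\mathcal{U}$ with $\varphi(T,\omega_1,y_1,v_1)=\alpha(\omega_1\cdot T)$. Next, drift under $u\equiv 0$ for time $S_n$, where by minimality of $\mathrm{gr}(\alpha)$ we pick $S_n\to\infty$ with $\omega_1\cdot(2T+S_n)\to\omega_2$; continuity of $\alpha$ then gives $\alpha(\omega_1\cdot(T+S_n))\to\alpha(\omega_2\cdot(-T))$. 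Finally, with $T_n:=2T+S_n$, apply (\ref{5.21}) with $\omega=\omega_2$, $\omega'=\omega_1\cdot T_n$, $y'=y_2$: the strict inequality $d(y_2,\alpha(\omega_2))<\varepsilon_0$ together with $\omega_1\cdot T_n\to\omega_2$ yields $d((\omega_1\cdot T_n,y_2),(\omega_2,\alpha(\omega_2)))<\varepsilon_0$ for $n$ large, hence $\mathbf{B}_\varepsilon(\alpha(\omega_2\cdot(-T)))\subset\mathbf{C}_T(\omega_1\cdot T_n,y_2)$; since $\alpha(\omega_1\cdot(T+S_n))$ eventually lies in this ball, some $v_n\in\mathcal{U}$ steers it exactly to $y_2$ at base $\omega_1\cdot T_n$ in time $T$. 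Concatenating $v_1$, $0$, $v_n$ on $[0,T_n]$ (and extending by $0$ outside) gives the desired $u_n$.

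For part (ii), take $(u^{(i)},\omega^{(i)},x^{(i)})\in V_i\subset\mathcal{D}'$, $i=1,2$. The plan is to construct a trajectory of $\Phi$ joining neighborhoods of these points inside $\mathcal{D}'$, by gluing $u^{(2)}$, a part-(i) steering, and a time-shifted copy of $u^{(1)}$; the gluing is arranged so that weak${}^\ast$-convergence of controls follows from the metric estimate (\ref{use_metric}) just as in the proof of Theorem~\ref{Theorem_equivalence}(i). Pick $N_n\to\infty$, set $\hat u_n=u^{(2)}$ on $(-\infty,N_n)$; from $(\omega^{(2)}\cdot N_n,\varphi(N_n,\omega^{(2)},x^{(2)},u^{(2)}))\in\mathcal{D}'$ apply part (i) with target $(\omega^{(1)},x^{(1)})$ to obtain a steering control on $[N_n,S_n]$ hitting $x^{(1)}$ exactly at time $S_n$, with $\omega^{(2)}\cdot S_n\to\omega^{(1)}$; finally set $\hat u_n(t)=u^{(1)}(t-S_n)$ for $t>S_n$. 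Because $\hat u_n\equiv u^{(2)}$ on the growing interval $(-\infty,N_n)$, (\ref{use_metric}) gives $\hat u_n\to u^{(2)}$ weak${}^\ast$, so $(\hat u_n,\omega^{(2)},x^{(2)})$ lies in $V_2$ for $n$ large. Symmetrically, choosing $M_n\to\infty$ with $M_n<S_n-N_n$ and absorbing the final $M_n$ units of the steering into a time-shifted copy of $u^{(1)}$, one has $\theta_{S_n}\hat u_n\equiv u^{(1)}$ on $(-M_n,\infty)$, so $\theta_{S_n}\hat u_n\to u^{(1)}$ weak${}^\ast$; combined with $\omega^{(2)}\cdot S_n\to\omega^{(1)}$ this places $\Phi(S_n,\hat u_n,\omega^{(2)},x^{(2)})=(\theta_{S_n}\hat u_n,\omega^{(2)}\cdot S_n,x^{(1)})$ in $V_1$ for $n$ large, so $\Phi(-S_n,V_1)\cap V_2\neq\emptyset$.

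The principal obstacle is verifying that $(\hat u_n,\omega^{(2)},x^{(2)})\in\mathcal{D}'$, i.e.\ that the full trajectory stays in the $\varepsilon_0$-tube around $\mathrm{gr}(\alpha)$ for every $t\in\mathbb{R}$. On $(-\infty,N_n]$ this is inherited from $(u^{(2)},\omega^{(2)},x^{(2)})\in\mathcal{D}'$; on the drift subinterval it is immediate since the trajectory lies on $\mathrm{gr}(\alpha)$; and on the tail $(S_n,\infty)$ it follows from $(u^{(1)},\omega^{(1)},x^{(1)})\in\mathcal{D}'$ plus uniform continuity of $\varphi$ with respect to the base on compact time windows, once $\omega^{(2)}\cdot S_n$ is close enough to $\omega^{(1)}$. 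The genuinely delicate pieces are the two length-$T$ steering windows from (i), on which (\ref{5.21}) pins down only the endpoints. This is precisely where the hypothesis ``$T$ large enough'' enters:\ one subdivides each steering window into short sub-intervals and iteratively applies (\ref{5.21}) with a small step-size, forcing the intermediate points to remain in the $\varepsilon_0$-tube by compactness of $\Omega\times Q\times\mathcal{U}$ and continuity of $\varphi$, and then concatenates the short moves via a Lemma~\ref{Lemma_concatenation}-style argument. Implementing this subdivision, while simultaneously arranging the double weak${}^\ast$ tail conditions $N_n\to\infty$ and $M_n\to\infty$ with $M_n<S_n-N_n$, is the main technical step.
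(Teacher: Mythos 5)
Part (i) of your plan coincides with the paper's argument: (\ref{5.21}) applied with zero distance gives $v_1$ steering $y_1$ exactly onto $\alpha(\omega_1\cdot T)$, minimality of $\mathrm{gr}(\alpha)$ produces the drift segment, and (\ref{5.21}) applied with $\omega=\omega_2$, $y'=y_2$, $\omega'=\omega_1\cdot T_n\to\omega_2$ yields the final steering $v_n$ that hits $y_2$ exactly. No issues there.

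For part (ii) your overall strategy (glue $v_2$ on a left ray, a part-(i) steering in the middle, a shifted copy of $v_1$ on a right ray, and check weak$^{*}$-closeness via (\ref{use_metric})) is the same as the paper's, but your reading of ``$T$ can be chosen large enough'' is not. In the paper this hypothesis is used solely to make the $L^1$-tails $\int_{\mathbb{R}\setminus[-T,T]}|y_{ij}|$ small, so that the concatenated control $u$ lies in $W(v_2)$ and $u(S+\cdot)$ lies in $W(v_1)$; the lead-in and lead-out windows have fixed length $T$, not your growing $N_n,M_n$. Your proposed ``subdivision'' mechanism for forcing the two length-$T$ steering pieces into the $\varepsilon_0$-tube does not work: condition (\ref{5.21}) is stated only for the single time $T$, pins down nothing about the trajectory on $(0,T)$, and cannot be re-applied at a smaller step-size. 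You have, however, correctly sensed a genuine tension in the paper's own argument: the proof opens by demanding a point in $V_1'\subset\mathcal{D}'$ whose $\Phi_{-S}$-image lies in $V_2'\subset\mathcal{D}'$, but concludes only with $\Phi(-S,V_1)\cap V_2\neq\emptyset$ and never verifies that the constructed $(u,\omega_2,x_2)$ satisfies the $\varepsilon_0$-tube condition. Your subdivision repair is not the right tool for closing that gap, and your tail argument via uniform continuity on compact time windows cannot by itself control the tube condition on the unbounded interval $(S_n,\infty)$.
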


\begin{proof}
(i) Let $(\omega_{1},\alpha(\omega_{1})),(\omega_{2},\alpha(\omega_{2}%
))\in\mathrm{gr}(\alpha)$. As shown in (\ref{step1a}), for $y_{1}\in
\mathbf{B}_{\varepsilon}(\alpha(\omega_{1}))$ there exists $v_{1}%
\in\mathcal{U}$ with%
\[
\psi(T,\omega_{1},y_{1},v_{1})=(\omega_{1}\cdot T,\varphi(T,\omega_{1}%
,y_{1},v_{1}))=(\omega_{1}\cdot T,\alpha(\omega_{1}\cdot T))
\]
and by (\ref{step1bb}) there are $S_{n}\rightarrow\infty$ with%
\[
\psi(S_{n},\omega_{1}\cdot T,\alpha(\omega_{1}\cdot T),0)\rightarrow
\psi(-T,\omega_{2},\alpha(\omega_{2}),0)=(\omega_{2}\cdot(-T),\alpha
(\omega_{2}\cdot(-T))).
\]
For $n$ large enough, this implies that%
\[
\varphi(S_{n},\omega_{1}\cdot T,\alpha(\omega_{1}\cdot T),0)\in\mathbf{B}%
_{\varepsilon}(\alpha(\omega_{2}\cdot(-T))).
\]
There is $\delta>0$ such that
\[
d(\omega_{1}\cdot(S_{n}+T),\omega_{2}\cdot(-T))<\delta\text{ implies }%
d(\omega_{1}\cdot(S_{n}+2T),\omega_{2})<\varepsilon_{0}.
\]
For $\omega^{\prime}:=\omega_{1}\cdot(S_{n}+2T)$ and $y^{\prime}=y_{2}$, it
holds that $d((\omega^{\prime},y^{\prime}),(\omega_{2},\alpha(\omega
_{2}))<\varepsilon_{0}$. By (\ref{5.21})%
\[
\mathbf{B}_{\varepsilon}(\alpha(\omega_{2}\cdot(-T)))\subset\mathbf{C}%
_{T}(\omega^{\prime},y^{\prime}),
\]
and hence it follows that there exists $v_{2}\in\mathcal{U}$ with%
\[
\varphi(T,\omega_{1}\cdot(S_{n}+2T),\varphi(S_{n},\omega_{1}\cdot
T,\alpha(\omega_{1}\cdot T),0),v_{2})=y_{2}.
\]
Define a control $u_{n}\in\mathcal{U}$ by%
\[
u_{n}(t)=\left\{
\begin{array}
[c]{lll}%
v_{1}(t) & \text{for} & t\in\lbrack0,T]\\
0 & \text{for} & t\in(T,S_{n}+T]\\
v_{2}(t-S_{n}-T) & \text{for} & t\in(S_{n}+T,S_{n}+2T]
\end{array}
\right.  .
\]
Then, with $T_{n}:=S_{n}+2T$ it follows that%
\[
\psi(T_{n},\omega_{1},y_{1},u_{n})=\psi(T,\psi(S_{n},\psi(T,\omega_{1}%
,y_{1},v_{1}),0),v_{2})=(\omega_{1}\cdot T_{n},y_{2}).
\]
(ii) Let $\emptyset\not =V_{1}^{\prime},V_{2}^{\prime}\subset\mathcal{D}%
^{\prime}$ be open. We have to show that there are $S>0$ and $(u,\omega,x)\in
V_{1}^{\prime}$ with $\Phi(-S,u,\omega,x)\in V_{2}^{\prime}$. The sets
$V_{j}^{\prime},j=1,2$, have the form $V_{j}^{\prime}=V_{j}\cap\mathcal{D}%
^{\prime}$, where $V_{j}$ are open subsets of $\mathcal{U}\times\Omega\times
M$. Using a base of the weak$^{\ast}$ topology on $\mathcal{U}$ (cf. Kawan
\cite[p. 20]{Kawan13}) we may further assume that for some $(v_{j},\omega
_{j},x_{j})\in\mathcal{U}\times\Omega\times M$ with $d(\varphi(t,\omega
_{j},x_{j},v_{j}),\alpha(\omega_{j}\cdot t))<\varepsilon_{0}$ for all
$t\in\mathbb{R}$, one has
\[
V_{j}=W(v_{j})\times\mathbf{B}_{\delta}(\omega_{j})\times\mathbf{B}_{\delta
}(x_{j}),j=1,2,
\]
where $\delta>0,k_{j}\in\mathbb{N}$, and
\begin{align*}
W(v_{j})  &  =\left\{  u\in\mathcal{U}\mid\left\vert \int_{\mathbb{R}%
}\left\langle v_{j}(\tau)-u(t),y_{ij}(\tau)\right\rangle d\tau\right\vert
<\delta\text{ for }i=1,\dots,k_{j}\right\}  ,\\
\mathbf{B}_{\delta}(\omega_{j})  &  =\left\{  \omega\in\Omega\left\vert
d(\omega_{j},\omega)<\delta\right.  \right\}  ,\mathbf{B}_{\delta}%
(x_{j})=\left\{  x\in M\left\vert d(x_{j},x)<\delta\right.  \right\}  .
\end{align*}
There is $T_{1}>0$ such that, for $j=1,2$ and $i=1,\dots,k_{j}$,%
\[
\int_{\mathbb{R}\setminus\lbrack-T_{1},T_{1}]}\left\vert y_{ij}(t)\right\vert
dt<\frac{\varepsilon}{\mathrm{diam}U}\text{ with }\mathrm{diam}U=\max_{u,v\in
U}\left\Vert u-v\right\Vert .
\]
By assumption, we may take $T\geq T_{1}$. Since $\varphi(T,\omega_{2}%
,x_{2},v_{2})\in\mathbf{B}_{\varepsilon}(\alpha(\omega_{2}\cdot T))$ and
$\varphi(-T,\omega_{1},x_{1},v_{1})\in\mathbf{B}_{\varepsilon}(\alpha
(\omega_{1}\cdot(-T))$, there are $S_{n}\rightarrow\infty$ and $v_{n}%
\in\mathcal{U}$ such that%
\[
\varphi(S_{n},\psi(T,\omega_{2},x_{2},v_{2}),v_{n})=\varphi(-T,\omega
_{1},x_{1},v_{1})\text{ and }\omega_{2}\cdot(T+S_{n})\rightarrow\omega
_{1}\cdot(-T)\text{ for }n\rightarrow\infty.
\]
Continuity of $\psi$ implies%
\[
\psi(T,\psi(S_{n},\psi(T,\omega_{2},x_{2},v_{2}),v_{n}),v_{1}(-T+\cdot
))\rightarrow\psi(T,\psi(-T,\omega_{1},x_{1},v_{1}),v_{1}(-T+\cdot
))=(\omega_{1},x_{1}).
\]
It follows that for $n>2$ large enough%
\[
(\omega_{0},z_{0}):=\psi(T,\psi(S_{n},\psi(T,\omega_{2},x_{2},v_{2}%
),v_{n}),v_{1}(-T+\cdot))\in\mathbf{B}_{\delta}(\omega_{1})\times
\mathbf{B}_{\delta}(x_{1}).
\]
Define a control $u\in\mathcal{U}$ by%
\[
u(t)=\left\{
\begin{array}
[c]{ccc}%
v_{2}(t) & \text{for} & t\in(-\infty,T]\\
v_{n}(t-T) & \text{for} & t\in(T,T+S_{n}]\\
v_{1}(t-S_{n}-2T) & \text{for} & t\in(T+S_{n},\infty)
\end{array}
\right.  .
\]
We find for $i=1,\dots,k_{2}$%
\begin{align*}
&  \left\vert \int_{\mathbb{R}}\left\langle v_{2}(t)-u(t),y_{i2}%
(t)\right\rangle dt\right\vert \\
&  \leq\left\vert \int_{-T}^{T}\left\langle v_{2}(t)-u(t),y_{i2}%
(t)\right\rangle dt\right\vert +\left\vert \int_{\mathbb{R}\setminus
\lbrack-T,T]}\left\langle v_{2}(t)-u(t),y_{i2}(t)\right\rangle dt\right\vert
\\
&  \leq0+\mathrm{diam}U\cdot\int_{\mathbb{R}\setminus\lbrack-T_{1},T_{1}%
]}\left\vert y_{i2}(t)\right\vert dt<\varepsilon.
\end{align*}
This proves that $u\in W(v_{2})$ and similarly it follows that $u(S_{n}%
+2T+\cdot)\in W(v_{1})$. Furthermore, by construction one has that, with
$S:=S_{n}+2T$,%
\begin{align*}
\omega_{0}  &  =\omega_{2}\cdot(T+S_{n}+T)=\omega_{2}\cdot S,\\
z_{0}  &  =\varphi(T,\psi(S_{n},\psi(T,\omega_{2},x_{2},v_{2}),v_{n}%
),v_{1}(-T+\cdot))\\
&  =\varphi(S_{n}+2T,\omega_{2},x_{2},u)=\varphi(S,\omega_{2},x_{2},u).
\end{align*}
This implies that $\Phi(-S,V_{1})\cap V_{2}\not =\emptyset$ since%
\begin{align*}
(u(S+\cdot),\omega_{0},z_{0})  &  \in W(v_{1})\times\mathbf{B}_{\delta}%
(\omega_{1})\times\mathbf{B}_{\delta}(x_{1})=V_{1},\\
\Phi(-S,u(S+\cdot),\omega_{0},z_{0})  &  =(u,\omega_{2},x_{2})\in
W(v_{2})\times\mathbf{B}_{\delta}(\omega_{2})\times\mathbf{B}_{\delta}%
(x_{2})=V_{2}.
\end{align*}

\end{proof}

\begin{remark}
\label{Remark_hull}Concerning the scalar Example \ref{Example_hull}, Elia,
Fabbri, and N\'{u}\~{n}ez \cite[Theorem 3.4, Theorem 3.6, and Theorem
3.8]{EFN25} present several sufficient conditions for the existence of
continuous equilibria $\alpha$. If one chooses the control range $U=[\rho
_{1},\rho_{2}]$ large enough, one easily sees that the assumptions of Theorem
\ref{Theorem_exact} and Theorem \ref{Theorem_equivalence2} can be satisfied.
\end{remark}

\end{document}